\newtheorem{theorem}{Theorem}[section]
\newtheorem{lemma}[theorem]{Lemma}
\theoremstyle{definition}
\newtheorem{definition}[theorem]{Definition}
\newtheorem{remark}[theorem]{Remark}
\numberwithin{equation}{section}
\newcommand{\Div}{\mbox{{\rm div}\,}}
\newcommand{\bu}{\mathbf{u}}
\newcommand{\bv}{\mathbf{v}}
\newcommand{\bF}{\mathbf{F}}
\newcommand{\bG}{\mathbf{G}}
\newcommand{\bW}{\mathbf{W}}
\newcommand{\bw}{\mathbf{w}}
\newcommand{\bL}{\mathbf{L}}
\newcommand{\bH}{\mathbf{H}}
\newcommand{\bUh}{\mathbb{U}_h}
\newcommand{\bVh}{\mathbb{V}_h}
\newcommand{\bphi}{\boldsymbol{\phi}}
\newcommand{\bpsi}{\boldsymbol{\psi}}
\newcommand{\bPhi}{\boldsymbol{\Phi}}
\newcommand{\td}{\text{\rm d}}
\newcommand{\tD}{\text{\rm D}}
\newcommand{\inttn}{\int_{t_n}^{t_{n+1}}}
\newcommand{\Rh}{\cal{R}_h}
\newcommand{\Ph}{\cal{P}_h}
\def\mbf#1{\mathbf{#1}}
\def\bb#1{\mathbb{#1}}
\def\cal#1{\mathcal{#1}}
\def\exc#1{\bb{E}\left[#1\right]}
\def\norm#1{\left\lVert{#1}\right\rVert}
\def\normbf#1{\left\lVert{#1}\right\rVert^{2}_{\mbf{L}^2}}
\def\normbfhalf#1{\left\lVert{#1}\right\rVert_{\mbf{L}^2}}
\def\normbfp#1{\left\lVert{#1}\right\rVert^{2p}_{\mbf{L}^2}}
\def\normbfh#1{\left\lVert{#1}\right\rVert^{2}_{\mbf{H}^1}}
\def\half{\frac{1}{2}}
\def\veps{\varepsilon}
\def\energy#1{J(\mbf{u}^{#1},\mbf{v}^{#1})}
\def\energyh#1{J(\mbf{u}_h^{#1},\mbf{v}_h^{#1})}
\def\normdel#1{\lambda\normbf{\Div(#1)}+\mu\normbf{\veps(#1)}}
\def\normdelbig#1{\lambda\normbf{\Div\big(#1\big)}+\mu\normbf{\veps\big(#1\big)}}
\def\normdelpbig#1{\lambda\normbfp{\Div\big(#1\big)}+\mu\normbfp{\veps\big(#1\big)}}
\def\innerprd#1#2{\lambda\big(\Div({#1}),\Div({#2})\big)+\mu\big(\veps({#1}),\veps({#2})\big)}
\def\innerprdminusk#1#2{-k\lambda\big(\Div({#1}),\Div({#2})\big)-k\mu\big(\veps({#1}),\veps({#2})\big)}
\def\innerlap#1#2{\big(\opL{#1}, {#2}\big)}
\def\bferr#1#2{\mbf{e}_\mbf{{#1}}^{#2}}
\def\engerr#1{J(\bferr{u}{#1},\bferr{v}{#1})}
\def\bfErr#1#2{\mbf{E}_\mbf{{#1}}^{#2}}
\def\Exp#1#2{#1\times 10^{#2}}
\newcommand{\opL}{\cal{L}}
\newcommand{\opLh}{\cal{L}_{h}}
\newtheorem{scheme}{Scheme}
\newcommand\commentone[1]{\textcolor{black}{#1}}
\begin{document}
\setcounter{page}{1}

\vspace*{2.0cm}
\title[Finite elements for nonlinear stochastic elastic wave equations]
{Fully discrete finite element methods for nonlinear stochastic elastic wave equations with multiplicative noise}
\author[Xiaobing Feng, Yukun Li, Yujian Lin]{Xiaobing Feng$^{1,*}$, Yukun Li$^2$, Yujian Lin$^3$}
\maketitle
\vspace*{-0.6cm}

\begin{center}
{\footnotesize

$^1$Department of Mathematics, The University of Tennessee,
Knoxville, TN 37996\\
$^2$Department of Mathematics, University of Central Florida, Orlando, FL 32816\\
$^3$Department of Mathematics, Northwestern Polytechnical University, Xian, Shaanxi, China, 710129\\
}\end{center}

\vskip 4mm {\footnotesize \noindent {\bf Abstract.}
This paper is concerned with fully discrete finite element methods for approximating variational solutions of nonlinear stochastic elastic wave equations with multiplicative noise. A detailed analysis of the properties of the weak solution is carried out and a fully discrete finite element method is proposed. Strong convergence in the energy norm with rate $\cal{O}(k+h^r)$ is proved, where $k$ and $h$ denote respectively the temporal and spatial mesh sizes, and $r(\geq 1)$ is the order of the finite element. Numerical experiments are provided to test the efficiency of proposed numerical methods and to validate the theoretical error estimate results.

\noindent {\bf Keywords.}
Stochastic elastic wave equations; multiplicative noise; It\^o stochastic integral; finite element method; error estimates; quantities of stochastic interests.}




\section{Introduction}\label{sec:introduction} 
This paper is concerned with numerical approximations of the following stochastic elastic wave equations with multiplicative noise of It\^o type:
\begin{alignat}{2}\label{eqn:elastic-wave-equation}
\bu_{tt}-\Div(\sigma(\bu))&=\bF[\bu]+\bG[\bu]\xi\quad &\text{in}&\  \cal{D}_{T}:=(0,T)\times\cal{D},\\
\bu(0,\cdot)&=\bu_{0},\quad\bu_{t}(0,\cdot)=\bv_{0}\quad &\text{in}&\ \cal{D},\label{20220526_1}\\
\bu(t,\cdot)&=0&\text{on}&\ \partial\cal{D}_{T}:=(0,T)\times\partial\cal{D},\label{20220526_2}
\end{alignat}
where 
$\bu_t=\frac{\td\bu}{\td t}$, $\xi = \dot{\bW} = \frac{\td \bW}{\td t}$ is the white noise, $\cal{D}\subset\bb{R}^{d}\ (d=2,3)$ is a bounded domain, $(\bu_{0},\bv_{0})$ is an $\bf{H}_{0}^{1}\times\bf{L}^{2}$-valued  random variable, and
\begin{align}
\sigma(\bu)&=(\lambda \Div\bu)\bf{I}+\mu\veps(\bu),\label{20220526_3}\\
\veps(\bu)&=\frac{1}{2}\bigl(\nabla\bu+(\nabla\bu)^{\bf{T}}\bigr),\label{20220526_4}\\
\bF[\bu]&=\bF(\bu,\nabla \bu),\label{20220526_5}\\
\bG[\bu]&=\bG(\bu,\nabla \bu).\label{20220526_6}
\end{align}
Here $\bf{I}$ denotes the unit matrix. $\bF[\bu]$ and $\bG[\bu]$ are two given nonlinear mappings satisfying some structure conditions. The multiplicative noise $\bG[\bu]\xi$ has the following three cases: 
\begin{enumerate}
\item[\itshape{Case 1.}] $W$ is a $\bb{R}$-valued Wiener process which is defined on the
filtered probability space $(\Omega,\cal{F},\{\cal{F}_{t}\}_{0\leq t\leq T},\bb{P})$, and $\bG[\bu]$ is $d$-dimensional nonlinear mapping;
\item[\itshape{Case 2.}]$\bW$ is a $\bb{R}^d$-valued Wiener process , and $G[\bu]$ is a scalar nonlinear mapping;
\item[\itshape{Case 3.}] $\bW$ is a $\bb{R}^{l}$-valued  Wiener process, $\bG[\bu]$ 
is a $d\times l$ matrix, then $\bG\dot{\bW}$ is a $d$-dimensional multiplicative noise.
\end{enumerate}
For the sake of presentation clarity, we only consider {\itshape Case 1}, For the other two cases,  it can be shown that the same results still hold.

\commentone{Wave propagation is a fundamental physical phenomenon, and it arises from various applications in geophysics, engineering, medical science, biology, etc.} There is a large amount of literature on  numerical methods for deterministic acoustic wave equations,   we refer the reader to \cite{adjerid2011discontinuous,baccouch2012local,baker1976error,chou2014optimal,chung2006optimal,chung2009optimal,dupont19732,falk1999explicit,grote2006discontinuous,grote2009optimal,monk2005discontinuous,riviere2003discontinuous,safjan1993high,sun2021,xing2013energy,zhong2011numerical} and the references therein for a detailed account. Moreover, numerical methods for stochastic acoustic wave equations have also been intensively developed in the last few years,  see \cite{anton2016full,cohen2013trigonometric,cohen2015fully,cohen2018numerical,cui2019strong,feng2022higher,gubinelli2018renormalization,hausenblas2010weak,hong2022energy,kovacs2010finite,li2022finite,quer2006space,walsh2006numerical}.  
Similarly, the elastic wave equations are also of great importance and find applications in geoscience for modeling seismic waves and in medical science for tumor detection as well as in materials science for non-destructive testing. Although there is a large literature in numerical methods for deterministic elastic wave equations, see  \cite{gauthier1986two,igel1995anisotropic,saenger2000modeling,virieux1984sh, marfurt1984accuracy, ciarlet2002finite,reddy1973convergence} and the references therein, there is barely any work on numerical analysis of stochastic elastic wave equations in the literature, which motivates us to carry out the work of this paper.

The primary goal of this paper is to develop some semi-discrete  (in time) scheme and fully discrete finite element methods for the stochastic elastic wave equation with multiplicative noise.
The highlight of the paper is the establishment of strong norm convergence and error estimates  for both semi-discrete and fully discrete methods. To achieve this goal, we first need to establish some stability and H\"older continuity estimates for the (variational) weak solution of the stochastic  wave equations. These results will be crucially used to derive the desired error estimates for the semi-discrete scheme. We next need to establish various (energy) stability estimates for the semi-discrete numerical solution, which are necessary for deriving the desired error estimates for the fully discrete finite element methods.     

The rest of the paper is organized as follows. In Section 2, we introduce a variational weak formulation for problem \eqref{eqn:elastic-wave-equation}--\eqref{20220526_2}. The stability and H\"older continuity estimates in the $\bL^2$-, $\bH^1$-, and $\bH^2$-norm are established for the strong solution. In Section 3, we propose a semi-discrete in time numerical scheme for problem\eqref{eqn:elastic-wave-equation}--\eqref{20220526_2}. It is proved that the semi-discrete solution is energy stable. Moreover, we prove the convergence with rate $O(k)$   in the $\bL^2$-norm  and $O(k^{\frac12})$ in the $\bH^1$-norm for the displacement approximations. In Section \ref{sec:fem-discrete}, we propose a fully discrete finite element method to discretize the semi-discrete scheme in space and derive its error estimates,  which show that for the linear finite element, the $\bL^2$-norm of the error converges with  $O(h^2)$ rate and  the $\bH^1$-norm converges with $O(h)$ rate. In Section \ref{sec:numerical-tests}, we present two two-dimensional numerical experiments to test the efficiency of the proposed numerical methods and to validate the theoretical error estimate results. Finally, we conclude the paper with a short summary given in Section \ref{sec:conclustion}.

\section{Preliminaries}\label{sec:preliminaries}
Standard notations for functions and spaces are adopted in this paper. For example, $\bL^p$ denotes $(L^p(\cal{D}))^d$ for $1\leq p\leq \infty$, $(\cdot,\cdot)$ denotes the standard $\bL^2(\cal{D})$-inner product and $\bH^m(\cal{D})$ denotes the Sobolev space of order $m$. Throughout this paper, $C$ will be used to denote a generic positive constant which is independent the mesh parameters $k$ and $h$. 

\subsection{Assumptions}\label{ssec:assumptions}
The following structural conditions will be imposed on the mappings $\bF[\cdot]$ and $\bG[\cdot]$:
\begin{align}
\|\bF[0]\|_{\bL^2}+\|\bG[0]\|_{\bL^2}\leq& C_A,\label{20220528_1}\\
\|\nabla_{\bu}\bF[\cdot]\|_{\bL^\infty}+\|\nabla_{\bu} \bG[\cdot]\|_{\bL^\infty}\leq& C_A,\label{20220528_2}\\
\|\bF_{\bu_i\bu_j}[\cdot]\|_{\bL^\infty}\leq& C_A,\quad 1\le i,j\le d,\label{20220624_15}\\
\normbfhalf{\bF[\bv]-\bF[\bw]}\leq& C_B\Bigl(\lambda\|\mathrm{div}(\bv-\bw)\|_{{\bf L}^2}^2+\mu\|\epsilon(\bv-\bw)\|_{{\bf L}^2}^2+\normbfhalf{\bv-\bw}^2\Bigr)^\half,\label{20220528_3}\\
\normbfhalf{\bG[\bv]-\bG[\bw]}\leq&C_B\Bigl(\lambda\|\mathrm{div}(\bv-\bw)\|_{{\bf L}^2}^2+\mu\|\epsilon(\bv-\bw)\|_{{\bf L}^2}^2+\normbfhalf{\bv-\bw}^2\Bigr)^\half,\label{20220528_4}
\end{align}
where $\bF_{\bu_i\bu_j}[\cdot]$ denotes the second derivative of $\bF$ with respect to $\bu_i, \bu_j$ , and $C_A$ and $C_B$ are two positive constants.

\subsection{Variational weak formulation and properties of weak solutions}\label{ssec:weakform}
In this subsection, we first give the definition of variational weak formulation and weak solutions for problem \eqref{eqn:elastic-wave-equation}--\eqref{20220526_2}. We then establish several technical lemmas that will be used in the subsequent sections.
	
Equations \eqref{eqn:elastic-wave-equation}--\eqref{20220526_2} can be written as
\begin{align}\label{eqn:elastic-wave-equation-uv}
\td\bu&=\bv\td t,\\
\td\bv&=(\opL\bu+\bF[\bu])\commentone{\td t}+\bG[\bu]\td W(t),\quad \opL\bu := \Div \sigma(\bu), 
\label{20220526_8}\\
\bu(0)&=\bu_{0},\quad\bv(0)=\bv_{0},\label{20220526_9}\\
\bu(t,\cdot)&=0.\label{20220526_10}
\end{align}

\begin{definition}\label{def:weakform-solution}
The weak formulation for problem \eqref{eqn:elastic-wave-equation-uv}--\eqref{20220526_10} 
is defined as seeking $(\bu, \bv)\in\bL^2\bigl(\Omega;\mbf{C}([0, T]; \commentone{\bL^2})\cap\bL^2((0, T),\mbf{H}^1_0)\bigr)\times\bL^2\bigl(\Omega;\mbf{C}([0, T], \bL^2)\bigr)$ such that 
\begin{alignat}{2}
\big(\bu(t), \bphi\big)=&\int_{0}^{t}\big(\bv(s), \bphi\big)\td s + (\bu_0, \bphi)&&\forall \bphi \in \bL^2,\label{eqn:weakform-1}\\
\big(\bv(t), \bpsi\big)=&-\int_{0}^{t}\lambda\big(\Div(\bu(s)), \Div(\bpsi)\big)\td s-\int_{0}^{t}\mu\big(\veps(\bu(s)), \veps(\bpsi)\big)\td s\label{eqn:weakform-2}\\
+\int_{0}^{t}&\big(\bF[\bu(s)], \bpsi\big)\td s+\int_{0}^{t}\big(\bG[\bu(s)]\td W(s), \bpsi\big) + (\bv_0, \bpsi)&&\forall \bpsi \in \bH^1_0 \nonumber
\end{alignat}
for all $(\phi, \psi)\in\bf{L}^2\times\bf{H}^1_0$. Such a pair $(\bu,\bv)$, if it exists,  is called
a (variational) weak solution to problem  \eqref{eqn:elastic-wave-equation-uv}--\eqref{20220526_10}. Moreover, if a weak solution $(\bu, \bv)$ belongs to $\bL^2\big(\Omega; \mbf{C}([0, T];$ $\bH^2\cap\bH^1_0)\big)\times\bL^2\big(\Omega; \mbf{C}([0, T];\bH^1_0)\big)$, then $(\bu, \bv)$ is called a strong solution to problem  \eqref{eqn:elastic-wave-equation-uv}--\eqref{20220526_10}.
\end{definition}

\begin{remark}
	The well-posedness of problem \eqref{eqn:elastic-wave-equation-uv}--\eqref{20220526_10} can be proved using the same technique (i.e., the Galerkin method) as done in \cite{Chow} for the acoustic stochastic wave equation with multiplicative noise. The only markable difference 
	is that \commentone{to verify the coercivity (or ellipticity) in $\bH^1(\cal{D})$ of the operator $\opL$, we need to use the well-known Korn's (second)  inequality. }
\end{remark}
	
We now state and prove the stability properties of the strong solution $(\bu, \bv)$ of problem  \eqref{eqn:elastic-wave-equation-uv}--\eqref{20220526_10}. Those bounds will be used to prove H\"older continuity in time in this section. They are also useful in establishing rates of convergence for the numerical schemes.
		
\begin{lemma}\label{lem:stability}
Let $(\bu, \bv)$ be a strong solution to equations \eqref{eqn:elastic-wave-equation-uv}--\eqref{20220526_10}. Under the assumptions \eqref{20220528_1}--\eqref{20220528_4},
 there hold
\begin{align}
\sup\limits_{0\leq t\leq T}\exc{\normbf{\bv}}+\sup\limits_{0\leq t\leq T}\exc{\normdel{\bu}}\leq& C_{s1},\label{eqn:stability-uh1}\\
\sup\limits_{0\leq t\leq T}\exc{\normdel{\bv}}+\sup\limits_{0\leq t\leq T}\exc{\normbf{\opL\bu}}\leq& C_{s2},\label{eqn:stability-vh1}\\
\sup\limits_{0\leq t\leq T}\exc{\normbf{\partial_{x_j}\bv}}+\sup\limits_{0\leq t\leq T}\exc{\normdel{\partial_{x_j}\bu}}\leq& C_{s3}\label{20220624_7}
\end{align}
for $1\leq j\leq d$, and
\begin{align*}
C_{s1}&=\Big(\exc{\normbf{\bv_0}}+\exc{\normdel{\bu_0}}+4C_A^2\Big)e^{CC_B^2},\\
C_{s2}&=\Big(\exc{\normdel{\bv_0}}+\exc{\normbf{\opL\bu_0}}+CC_A^2C_{s1}\Big)e^T,\\
C_{s3}&=\Big(\exc{\normbf{\partial_{x_j}\bv_0}}+\exc{\normdel{\partial_{x_j}\bu_0}}\Big)e^{CC_A^2}.
\end{align*}
\end{lemma}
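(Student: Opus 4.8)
\noindent\emph{Proof strategy.} The plan is to prove the three bounds by a hierarchy of energy estimates: for each we choose a quadratic ``energy'', compute its It\^o differential so that the elastic operator $\opL$ cancels, take expectations to kill the stochastic integral, and close with Gronwall's inequality. Throughout write $a(\mathbf{w},\mathbf{z}):=\innerprd{\mathbf{w}}{\mathbf{z}}$, so that $a(\mathbf{w},\mathbf{w})=\normdel{\mathbf{w}}$. I will use repeatedly: (i) coercivity on $\bH^1_0$, $\normbf{\mathbf{w}}\le C\,a(\mathbf{w},\mathbf{w})$, which is Korn's (first) inequality together with Poincar\'e's; (ii) Korn's second inequality $\|\nabla\mathbf{w}\|_{\bL^2}^2\le C\big(\|\veps(\mathbf{w})\|_{\bL^2}^2+\normbf{\mathbf{w}}\big)$ for $\mathbf{w}\in\bH^1$; and (iii) the elliptic estimate $\|\bu\|_{\bH^2}\le C\,\normbfhalf{\opL\bu}$ for the Lam\'e operator $\opL$. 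Since a strong solution has only $\bu\in\bH^2\cap\bH^1_0$, $\bv\in\bH^1_0$, the differentiations and integrations by parts below that require extra regularity are first performed on the Galerkin approximations used to construct the solution, with bounds uniform in the discretization, and then transferred to $(\bu,\bv)$ by weak lower semicontinuity of the norms; I describe only the formal computation.

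\emph{The bound \eqref{eqn:stability-uh1}.} Put $E_1(t):=\normbf{\bv(t)}+a(\bu(t),\bu(t))$. Since $\td\bu=\bv\,\td t$ carries no noise, $t\mapsto a(\bu,\bu)$ is $C^1$ with derivative $2a(\bu,\bv)$, whereas It\^o's formula applied to $\normbf{\bv}$, using $\td\bv=(\opL\bu+\bF[\bu])\,\td t+\bG[\bu]\,\td W$, gives $\td\normbf{\bv}=\big(2(\bv,\opL\bu)+2(\bv,\bF[\bu])+\normbf{\bG[\bu]}\big)\,\td t+2(\bv,\bG[\bu])\,\td W$. Integrating $(\bv,\opL\bu)=(\bv,\Div\sigma(\bu))$ by parts, using $\bv=0$ on $\partial\cal D$ and the symmetry of $\sigma(\bu)$, one finds $(\bv,\opL\bu)=-a(\bu,\bv)$, so the $\opL$-terms cancel and
\[
\td E_1=\big(2(\bv,\bF[\bu])+\normbf{\bG[\bu]}\big)\,\td t+2(\bv,\bG[\bu])\,\td W .
\]
Taking expectations (the It\^o integral is a martingale, after a routine localization) gives $\tfrac{\td}{\td t}\exc{E_1}=2\exc{(\bv,\bF[\bu])}+\exc{\normbf{\bG[\bu]}}$. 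By Young's inequality $2(\bv,\bF[\bu])\le\normbf{\bv}+\normbf{\bF[\bu]}$, and by \eqref{20220528_1}, \eqref{20220528_3}--\eqref{20220528_4} together with (i), $\normbf{\bF[\bu]}+\normbf{\bG[\bu]}\le CC_B^2\,a(\bu,\bu)+4C_A^2$; hence $\tfrac{\td}{\td t}\exc{E_1}\le C(1+C_B^2)\exc{E_1}+4C_A^2$, and Gronwall's inequality yields \eqref{eqn:stability-uh1}.

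\emph{The bound \eqref{eqn:stability-vh1}.} Take the ``second energy'' $E_2(t):=\normbf{\opL\bu(t)}+a(\bv(t),\bv(t))$, the stochastic analogue of the conserved quantity $\|\bu_{tt}\|^2+a(\bu_t,\bu_t)$ of the deterministic equation. Formally $\tfrac{\td}{\td t}\normbf{\opL\bu}=2(\opL\bu,\opL\bv)$, while It\^o's formula for the quadratic form $a(\bv,\bv)$ produces a drift $2a(\bv,\opL\bu+\bF[\bu])$ and a correction $a(\bG[\bu],\bG[\bu])$; integration by parts gives $a(\bv,\opL\bu)=-(\opL\bv,\opL\bu)$, so the two $\opL$-cross terms cancel and $\tfrac{\td}{\td t}\exc{E_2}=2\exc{a(\bv,\bF[\bu])}+\exc{a(\bG[\bu],\bG[\bu])}$. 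Using the chain rule, the $\bL^\infty$-bounds \eqref{20220528_2} on the first derivatives of $\bF$ and $\bG$, Korn's inequality and the elliptic estimate (iii), one bounds $a(\bG[\bu],\bG[\bu])\le CC_A^2\big(\|\bu\|_{\bH^1}^2+\normbf{\opL\bu}\big)$ and $2a(\bv,\bF[\bu])\le a(\bv,\bv)+CC_A^2\big(\|\bu\|_{\bH^1}^2+\normbf{\opL\bu}\big)$, and then absorbs $\|\bu\|_{\bH^1}^2$ using \eqref{eqn:stability-uh1}; this gives $\tfrac{\td}{\td t}\exc{E_2}\le C(1+C_A^2)\exc{E_2}+CC_A^2C_{s1}$, and Gronwall yields \eqref{eqn:stability-vh1}.

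\emph{The bound \eqref{20220624_7}, and the main obstacle.} Differentiating the system \eqref{eqn:elastic-wave-equation-uv}--\eqref{20220526_10} in the direction $x_j$ and using that $\lambda,\mu$ are constant (so $\partial_{x_j}\opL=\opL\partial_{x_j}$), one sees that $\bu^{(j)}:=\partial_{x_j}\bu$, $\bv^{(j)}:=\partial_{x_j}\bv$ solve a system of exactly the same form, with force $\partial_{x_j}\bF[\bu]$ and noise coefficient $\partial_{x_j}\bG[\bu]$. Running the argument of the first step on $E_3^{(j)}(t):=\normbf{\bv^{(j)}}+a(\bu^{(j)},\bu^{(j)})+\normbf{\bu^{(j)}}$ --- the term $\normbf{\bu^{(j)}}$ is added because $\bu^{(j)}$ no longer vanishes on $\partial\cal D$, so $a(\bu^{(j)},\bu^{(j)})$ controls $\|\nabla\bu^{(j)}\|_{\bL^2}^2$ only through Korn's \emph{second} inequality --- and estimating $\normbf{\partial_{x_j}\bF[\bu]}+\normbf{\partial_{x_j}\bG[\bu]}\le CC_A^2E_3^{(j)}$ by the chain rule and \eqref{20220528_2}, one obtains $\tfrac{\td}{\td t}\exc{E_3^{(j)}}\le CC_A^2\exc{E_3^{(j)}}$, whence \eqref{20220624_7} by Gronwall. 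I expect the real difficulty to sit precisely here: differentiating in $x_j$ destroys the boundary condition, so the integration by parts now leaves a boundary term $\int_{\partial\cal D}\big(\sigma(\bu^{(j)})\mathbf{n}\big)\cdot\bv^{(j)}\,\td s$ which is not obviously zero and must be handled (e.g.\ by working with tangential difference quotients and recovering the normal derivatives from the equation, or under a regularity hypothesis on $\partial\cal D$). This, together with justifying the formal differentiations in \eqref{eqn:stability-vh1}--\eqref{20220624_7} through the Galerkin scheme, is the only non-routine part; everything else is Young's inequality, the structural bounds \eqref{20220528_1}--\eqref{20220528_4}, and Gronwall.
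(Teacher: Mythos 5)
Your proof follows essentially the same route as the paper's: the same three energy functionals ($\|\bv\|_{\mathbf{L}^2}^2+\lambda\|\Div\bu\|_{\mathbf{L}^2}^2+\mu\|\veps(\bu)\|_{\mathbf{L}^2}^2$, then $\|\opL\bu\|_{\mathbf{L}^2}^2$ paired with the elastic energy of $\bv$, then the $\partial_{x_j}$-differentiated version), the same It\^o-formula cancellation of the elastic cross terms, the same use of the structural conditions with Young, Poincar\'e and Korn, and Gronwall to close, so the argument is correct and matches the paper up to inessential bookkeeping of constants. The boundary-term obstruction you flag in the third step (loss of the homogeneous Dirichlet condition for $\partial_{x_j}\bu$ when integrating by parts) is a genuine point, but the paper does not address it either --- it simply asserts the differentiated energy identity ``similar to Step 1'' --- so your treatment is no less complete than the original.
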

	 
\begin{proof}\label{prf:stability}
{\itshape Step 1:} Applying It\^o's formula to the functional $\bPhi_1(\bv(\cdot))=\normbf{\bv(\cdot)}$ yields
\begin{align}\label{eqn:prf-stability-uh1-1}
&\normbf{\bv(t)}=\normbf{\bv_0}+\int_{0}^{t}\tD\bPhi_1\big(\bv(s)\big)\big(\opL\bu+\bF[\bu]\big)\td s\\
&\qquad+\int_{0}^{t}\text{Tr}\Big(\tD^2\bPhi_1\big(\bv(s)\big)\big(\bG[\bu],\bG[\bu]\big)\Big)\td s + \int_{0}^{t}\tD\bPhi_1\big(\bv(s)\big)\big(\bG[\bu]\td W(s)\big).\notag
\end{align}

The expressions of $\tD\bPhi_1(\bv(\cdot))$ and $\tD^{2}\bPhi_1(\bv(\cdot))$ are as follows:
\begin{alignat}{2}
\tD\bPhi_1(\bv)(\bw_1)&=2(\bv, \bw_1) &&\qquad \forall\bw_1\in C^{\infty}_0,\\
\tD^2\bPhi_1(\bv)(\bw_1, \bw_2)&=2(\bw_1, \bw_2) &&\qquad \forall\bw_1,\bw_2\in C^{\infty}_0.\notag
\end{alignat}

Substituting the expressions of $\tD\bPhi_1(\bv(\cdot))$ and $\tD^{2}\bPhi_1(\bv(\cdot))$ into \eqref{eqn:prf-stability-uh1-1} , we get
\begin{align}\label{eqn:prf-stability-uh1-4}
&\normbf{\bv(t)}+\normdelbig{\bu(t)}\\
&\qquad=\normbf{\bv_0}+\normdel{\bu_0}\notag\\
&\qquad\quad+2\int_{0}^{t}\big(\bF[\bu], \bv\big)\td s+\int_{0}^{t}\normbf{\bG[\bu]}\td s+2\int_{0}^{t}\big(\bG[\bu], \bv\big)\td W(s)\notag\\
&\qquad:= {\normbf{\bv_0}}+{\normdel{\bu_0}}+ I_1 + I_2 + I_3.\notag
\end{align}
		
For the first term $I_1$, let $\bar\bu=0$ in equation \eqref{20220528_3}. Using equation \eqref{20220528_1}, the Poincar\'{e} inequality, and the Korn's inequality, we have
\begin{align}\label{eqn:prf-stability-uh1-5}
2\int_{0}^{t}\big(\bF[\bu], \bv\big)\td s\leq&\int_{0}^{t}\|\bF[\bu]\|_{{\bf L}^2}^2+\normbf{\bv}\td s\\
\leq&2C_B^2\int_{0}^{t}\normdel{\bu}+\normbf{\bu}\td s+2C_A^2+\int_{0}^{t}\big(\normbf{\bv}\big)\td s\notag\\
\leq&CC_B^2\int_{0}^{t}\normdel{\bu}\td s+2C_A^2+\int_{0}^{t}\normbf{\bv}\td s\notag.
\end{align}

Taking the expectation on both sides of \eqref{eqn:prf-stability-uh1-5}, we obtain
\begin{align}\label{20220530_3}
2\mathbb{E}\Bigl[\int_{0}^{t}\commentone{\big(\bF[\bu], \bv\big)}\td s\Bigr]\leq&CC_B^2\mathbb{E}\Bigl[\int_{0}^{t}\normdel{\bu}\td s\Bigr]\\
&+2C_A^2+\mathbb{E}\Bigl[\int_{0}^{t}\normbf{\bv}\td s\Bigr].\notag
\end{align}

Similarly, using equations \eqref{20220528_1} and \eqref{20220528_4}, the Poincar\'{e} inequality, and the Korn's inequality, the expectation of the second term $I_2$ can be bounded by
\begin{align}
\mathbb{E}\Bigl[\int_{0}^{t}\normbf{\bG[\bu]}\td s\Bigr]\leq&  2C_B^2\mathbb{E}\Bigl[\int_{0}^{t}\normdel{\bu}+\normbf{\bu}\td s\Bigr]+2C_A^2\label{eqn:prf-stability-uh1-6}\\
\leq&CC_B^2\mathbb{E}\Bigl[\int_{0}^{t}\normdel{\bu}\td s\Bigr]+2C_A^2.\notag
\end{align}
			
The third term $I_3$ is a martingale, and $\exc{I_3}=0$. Taking the expectation on both sides of \eqref{eqn:prf-stability-uh1-4} and using the Gronwall's inequality, we get
\begin{align}\label{eqn:prf-stability-uh1-7}
&\exc{\normbf{\bv}}+\exc{\normdel{\bu}}\\
&\qquad\leq\Big(\exc{\normbf{\bv_0}}+\exc{\normdel{\bu_0}}+4C_A^2\Big)e^{CC_B^2}.\notag
\end{align}
		
{\itshape Step 2:} Again, by applying It\^o's formula to $\bPhi_2\big(\bu(\cdot)\big)=\normbf{\opL\bu(\cdot)}$, we obtain
\begin{equation}\label{eqn:prf-stability-uh2-1}
\normbf{\opL\bu(t)}=\normbf{\opL\bu_0}+2\int_{0}^{t}\big(\opL\bu(s), \opL\bv(s)\big)\td s.
\end{equation}

Applying It\^o's formula to $\bPhi_3\big(\bv(\cdot)\big)=\normdelbig{\bv(\cdot)}$ leads to
\begin{align}\label{eqn:prf-stability-uh2-2}
&\normdel{\bv(t)}=\normdel{\bv_0}\\
&\qquad+2\int_{0}^{t}\Big(\innerprd{\opL\bu}{\bv}\Big)\td s\notag\\
&\qquad+2\int_{0}^{t}\Big(\innerprd{\bF[\bu]}{\bv}\Big)\td s\notag\\
&\qquad+\int_{0}^{t}\big(\normdel{\bG[\bu]}\big)\td s\notag\\
&\qquad+2\int_{0}^{t}\Big(\innerprd{\bG[\bu]\td W(s)}{\bv}\Big)\td s.\notag
\end{align}

Adding \eqref{eqn:prf-stability-uh2-1} and \eqref{eqn:prf-stability-uh2-2} gives
\begin{align}\label{eqn:prf-stability-uh2-3}
&\normbf{\opL\bu(t)}+\normdel{\bv(t)}=\normdel{\bv_0}\\
&\qquad+\normbf{\opL\bu_0}+2\int_{0}^{t}\Big(\innerprd{\bF[\bu]}{\bv}\Big)\td s\notag\\
&\qquad+\int_{0}^{t}\big(\normdel{\bG[\bu]}\big)\td s\notag\\
&\qquad+2\int_{0}^{t}\Big(\innerprd{\bG[\bu]\td W(s)}{\bv}\Big)\td s\notag\\
&:=\normdel{\bv_0}+\normbf{\opL\bu_0}+I_1 + I_2 + I_3.\notag
\end{align}
		
By equation \eqref{20220528_2} and the Korn's inequality, the first term $I_1$ can be bounded by
\begin{align}\label{eqn:prf-stability-uh2-4}
&2\int_{0}^{t}\Big(\innerprd{\bF[\bu]}{\bv}\Big)\td s\\
&\qquad\leq \int_{0}^{t}\big(\normdel{\bF[\bu]}\big)\td s+\int_{0}^{t}\big(\normdel{\bv}\big)\td s\notag\\
&\qquad\leq {CC_A^2}\int_{0}^{t}\big(\lambda\normbf{\Div(\bu)}+\mu\normbf{\veps(\bu)}\big)\td s+\int_{0}^{t}\big(\normdel{\bv}\big)\td s\notag.
\end{align}

Similarly, by equation \eqref{20220528_2} and the Korn's inequality, the second term $I_2$ can be bounded by
\begin{align}
&\int_{0}^{t}\big(\normdel{\bG[\bu]}\big)\td s\le {CC_A^2}\int_{0}^{t}\big(\lambda\normbf{\Div(\bu)}+\mu\normbf{\veps(\bu)}\big)\td s.\label{eqn:prf-stability-uh2-5}
\end{align}
			
The third term $I_3$ is a martingale, and $\exc{I_3}=0$. Using equation \eqref{eqn:stability-uh1}, taking the expectation on both sides of \eqref{eqn:prf-stability-uh2-3} and using the Gronwall's inequality, we have
\begin{align}\label{eqn:prf-stability-uh2-6}
&\exc{\normbf{\opL\bu}}+\exc{\normdel{\bv}}\\
&\qquad\leq\big(\exc{\normdel{\bv_0}}+\exc{\normbf{\opL\bu_0}}+CC_A^2C_{s1}\big)e^T.\notag
\end{align}

{\itshape Step 3:} Similar to Step 1, by applying It\^o's formula to the functional $\normbf{\partial_{x_j}\bv(\cdot)}$ for $1\leq j\leq d$, we get
\begin{align}\label{20220624_8}
&\normbf{\partial_{x_j}\bv(t)}+\normdelbig{\partial_{x_j}\bu(t)}\\
&\quad=\normbf{\partial_{x_j}\bv_0}+\normdel{\partial_{x_j}\bu_0}+\int_{0}^{t}\normbf{\partial_{x_j}\bG[\bu]}\td s\notag\\
&\quad\quad+2\int_{0}^{t}\big(\partial_{x_j}\bF[\bu], \partial_{x_j}\bv\big)\td s+2\int_{0}^{t}\big(\partial_{x_j}\bG[\bu], \partial_{x_j}\bv\big)\td W(s)\notag\\
&\quad:= {\normbf{\partial_{x_j}\bv_0}}+{\normdel{\partial_{x_j}\bu_0}}+ I_1 + I_2 + I_3.\notag
\end{align}

Using equation \eqref{20220528_2} and Korn's inequality, we bound $I_1$ and $I_2$ by
\begin{align}\label{20220624_9}
2\mathbb{E}\Bigl[\int_{0}^{t}\big(\partial_{x_j}\bF[\bu], \partial_{x_j}\bv\big)\td s\Bigr]&\leq CC_A^2\mathbb{E}\Bigl[\int_{0}^{t}\lambda\bigl\|\mathrm{div}(\partial_{x_j}\bu)\bigr\|_{{\bf L}^2}^2\\
&\qquad+\mu\bigl\|\epsilon(\partial_{x_j}\bu)\bigr\|_{{\bf L}^2}^2\td s\Bigr]+\mathbb{E}\Bigl[\int_{0}^{t}\normbf{\partial_{x_j}\bv}\td s\Bigr],\notag\\
\mathbb{E}\Bigl[\int_{0}^{t}\normbf{\partial_{x_j}\bG[\bu]}\td s\Bigr]&\leq CC_A^2\mathbb{E}\Bigl[\int_{0}^{t}\lambda\bigl\|\mathrm{div}(\partial_{x_j}\bu)\bigr\|_{{\bf L}^2}^2+\mu\bigl\|\epsilon(\partial_{x_j}\bu)\bigr\|_{{\bf L}^2}^2\td s\Bigr]\label{20220624_10}.
\end{align}

The third term $I_3$ is a martingale, and $\exc{I_3}=0$. Taking the expectation on both sides of \eqref{eqn:prf-stability-uh1-4} and using Gronwall's inequality, we get
\begin{align}\label{20220624_11}
&\exc{\normbf{\partial_{x_j}\bv}}+\exc{\normdel{\partial_{x_j}\bu}}\\
&\qquad\leq\Big(\exc{\normbf{\partial_{x_j}\bv_0}}+\exc{\normdel{\partial_{x_j}\bu_0}}\Big)e^{CC_A^2}.\notag
\end{align}

The proof of Lemma \ref{lem:stability} is complete.
\end{proof}

The following lemma is also needed to establish another stability property of the strong solution $(\bu, \bv)$, which will be given in Lemma \ref{lem20220624_2}.

\begin{lemma}\label{lem20220624_1}
Let $(\bu, \bv)$ be a strong solution to equations \eqref{eqn:elastic-wave-equation-uv}--\eqref{20220526_10}. Under the assumptions \eqref{20220528_1}--\eqref{20220528_4}, there hold
\begin{align}\label{20220624_12}
\mathbb{E}\bigl[\|\opL\bF[\bu]\|_{\mathbf{L}^2}^2\bigr]&\leq CC_A^2C_{s1}+CC_A^2C_{s3},\\
\mathbb{E}\bigl[\|\opL\bG[\bu]\|_{\mathbf{L}^2}^2\bigr]&\leq CC_A^2C_{s1}+CC_A^2C_{s3}.\label{20220625_3}
\end{align}
\end{lemma}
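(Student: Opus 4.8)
The plan is to bound the ``second-order'' (i.e.\ $\opL$-applied) nonlinearities by expanding $\opL\bF[\bu]=\Div\sigma(\bF[\bu])$ in terms of first and second spatial derivatives of $\bu$, then invoking the $C_A$-bounds on the derivatives of $\bF$ (assumptions \eqref{20220528_2} and \eqref{20220624_15}) together with the stability estimates \eqref{eqn:stability-uh1}, \eqref{eqn:stability-vh1}, and \eqref{20220624_7} already proved in Lemma \ref{lem:stability}. Concretely, since $\bF[\bu]=\bF(\bu,\nabla\bu)$, the chain rule gives $\partial_{x_j}\bF[\bu]=\nabla_{\bu}\bF[\bu]\,\partial_{x_j}\bu+\sum_i \bF_{\xi}[\bu]\,\partial_{x_j}\partial_{x_i}\bu$ (schematically, where $\bF_\xi$ denotes differentiation in the gradient slot), and a second differentiation produces terms that are linear in $\partial^2_{x_j x_k}\bu$ with $L^\infty$ coefficients, plus terms quadratic in first derivatives $\partial_{x_j}\bu\cdot\partial_{x_k}\bu$ with bounded second-derivative-of-$\bF$ coefficients. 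Since $\opL\bu=\Div\sigma(\bu)$ already controls a full set of second spatial derivatives of $\bu$ in $\bL^2$ (via \eqref{eqn:stability-vh1}, and using Korn/elliptic regularity as in the well-posedness remark), I would write $\|\opL\bF[\bu]\|_{\bL^2}^2 \lesssim C_A^2\big(\|\bu\|_{\bH^2}^2 + (\text{quadratic gradient terms})\big)$.

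The key steps, in order, are: (i) write out $\opL\bF[\bu]$ using the definition $\sigma(\bw)=(\lambda\Div\bw)\mbf I+\mu\veps(\bw)$ and the chain rule, organizing terms by the highest order of derivative falling on $\bu$; (ii) estimate the top-order terms $\big|\nabla_\bu\bF[\bu]\big|\,|\tD^2\bu|$ and $\big|\bF_\xi[\bu]\big|\,|\tD^2\bu|$ in $\bL^2$ by $C_A\,\|\tD^2\bu\|_{\bL^2}$, then bound $\mathbb E[\|\tD^2\bu\|_{\bL^2}^2]$ using \eqref{eqn:stability-vh1} (which controls $\mathbb E[\|\opL\bu\|_{\bL^2}^2]$ and hence, by elliptic regularity for the Lamé operator together with Korn's inequality, $\mathbb E[\|\bu\|_{\bH^2}^2]$); (iii) estimate the quadratic first-order terms $\big|\bF_{\bu_i\bu_j}[\bu]\big|\,|\partial_{x_i}\bu|\,|\partial_{x_j}\bu|$ — here I would use the $\bL^\infty$-bound \eqref{20220624_15} on the second derivatives of $\bF$ and absorb the product $|\nabla\bu|^2$ in $\bL^2$, i.e.\ $\||\nabla\bu|^2\|_{\bL^2}=\|\nabla\bu\|_{\bL^4}^2$, then control $\|\nabla\bu\|_{\bL^4}$ by $\|\bu\|_{\bH^2}$ via Sobolev embedding ($\bH^1\hookrightarrow\bL^4$ in $d=2,3$), again reducing to \eqref{eqn:stability-vh1}; (iv) collect everything: each surviving term's expectation is bounded by $CC_A^2$ times a combination of $C_{s1}$ (for the $\bL^2/\bH^1$-type pieces, really the $\normdel{\bu}$ bound) and $C_{s3}$ and $C_{s2}$ (for the $\bH^2$-type pieces), and since the statement only asks for $CC_A^2C_{s1}+CC_A^2C_{s3}$ I would note that $C_{s2}$ is itself $\le C(\cdots + CC_A^2C_{s1})e^T$ and thus subsumed, or simply track which stability constants are actually invoked. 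The estimate for $\opL\bG[\bu]$ is verbatim the same, using \eqref{20220528_2} and \eqref{20220528_4} in place of their $\bF$-counterparts and the identical assumption \eqref{20220624_15}-type bound on $\bG$'s second derivatives (implicitly assumed analogously).

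The main obstacle is the quadratic-in-gradient term in step (iii): unlike the linear terms, $|\nabla\bu|^2$ is not directly controlled by any first-order energy quantity, so one genuinely needs the $\bH^2$-regularity of $\bu$ together with a Sobolev embedding, and one must be careful that the embedding constant is dimension-dependent but uniform in the mesh parameters (it is, being a property of the continuous solution). A secondary technical point is justifying that $\mathbb E[\|\bu\|_{\bH^2}^2]$ is finite and bounded by the stated constants: this follows because $(\bu,\bv)$ is a \emph{strong} solution (so $\bu\in\bH^2\cap\bH^1_0$ a.s.), and applying standard elliptic regularity to $\opL\bu = \Div\sigma(\bu)$ — a uniformly elliptic system by Korn's second inequality — upgrades the $\bL^2$-bound on $\opL\bu$ from \eqref{eqn:stability-vh1} to a full $\bH^2$-bound, with a constant depending only on $\cal D$. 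Modulo these two points the argument is a routine product-rule expansion followed by Cauchy–Schwarz and Young's inequality.
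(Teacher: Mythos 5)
Your overall strategy coincides with the paper's: expand $\opL\bF[\bu]$ by the product rule, bound the coefficients by the $\bL^\infty$ estimates \eqref{20220528_2} and \eqref{20220624_15}, and control the surviving derivative factors by the stability estimates of Lemma \ref{lem:stability}. One structural difference: for the second-derivative pieces the paper does not pass through elliptic regularity for the Lam\'e operator. It invokes \eqref{20220624_7} directly, which together with Korn's inequality bounds $\exc{\|\nabla\partial_{x_j}\bu\|_{\bL^2}^2}$ and hence all second derivatives of $\bu$; this is precisely why the constant in the statement is $CC_A^2C_{s1}+CC_A^2C_{s3}$ and does not involve $C_{s2}$. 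Your elliptic-regularity detour is workable but unnecessary, and it obscures which constants actually enter.

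There is, however, a concrete misstep in your chain-rule bookkeeping. If you retain the dependence of $\bF$ on the gradient slot, then $\partial_{x_j}\bF[\bu]$ contains a term $\bF_\xi[\bu]\,\partial_{x_j}\nabla\bu$, and the \emph{second} differentiation needed to form $\opL\bF[\bu]$ produces \emph{third} derivatives of $\bu$, not (as you claim) only terms linear in $\partial^2_{x_jx_k}\bu$. Third derivatives are controlled neither by the assumptions \eqref{20220528_1}--\eqref{20220624_15} nor by any of $C_{s1},\dots,C_{s4}$, so with genuine $\nabla\bu$-dependence your argument breaks down at step (ii). The paper's computation avoids this only by differentiating exclusively in the $\bu$ slot (its chain rule reads $\Div\bF[\bu]=(\nabla_\bu\bF)^T:\nabla\bu$ with no gradient-slot contribution), i.e.\ it effectively treats $\bF[\bu]=\bF(\bu)$; you need to do the same, or add an assumption suppressing the gradient dependence. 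Separately, on your step (iii): you rightly single out the quadratic gradient term, but its squared $\bL^2$ norm is $\|\nabla\bu\|_{\bL^4}^4$, whose expectation is a \emph{fourth} moment of an $\bH^2$-type quantity and is not literally delivered by the second-moment bounds \eqref{eqn:stability-vh1} or \eqref{20220624_7}; the paper asserts the bound at this point without comment, so you are no worse off, but ``again reducing to \eqref{eqn:stability-vh1}'' overstates what that estimate provides.
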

\begin{proof}
Notice that 
\begin{equation}\label{20220625_1}
\opL\bF[\bu]=\mathrm{div}\bigl((\lambda \Div\bF[\bu]){\bf{I}}\bigr)+\mathrm{div}\bigl(\mu\veps(\bF[\bu])\bigr):=\commentone{I_1+I_2.}
\end{equation}

The first term $I_1$ on the right-hand side of \eqref{20220625_1} can be written as
\begin{align}
I_1 &= \mathrm{div}\bigl((\lambda (\nabla_{\bu}\bF)^T:\nabla\bu)\bf{I}\bigr)\\
& = \lambda\nabla\bigl((\nabla_{\bu}\bF)^T:\nabla\bu\bigr)\notag\\
& = \lambda\sum\limits_{j=1}^d\bigl[(\nabla((\nabla_{\bu}\bF)^T)_j)^T(\nabla\bu)_j+(\nabla(\nabla\bu)_j)^T((\nabla_{\bu}\bF)^T)_j\bigr],\notag
\end{align}
where $(\nabla_{\bu}\bF)^T:\nabla\bu$ denotes the element-wise product of two matrices, and $((\nabla_{\bu}\bF)^T)_j$ and $(\nabla\bu)_j$ denote the $j$-th columns of $(\nabla_{\bu}\bF)^T$ and $\nabla\bu$, respectively.

By equations \eqref{20220528_2}--\eqref{20220624_15}, equations \eqref{eqn:stability-uh1}--\eqref{20220624_7}, and the Korn's inequality, we have
\begin{align}\label{20220625_4}
\mathbb{E}\bigl[\|I_1\|_{{\bf L}^2}^2\bigr]&\le CC_A^2C_{s1}+CC_A^2C_{s3}.
\end{align}

Define $A$ and $B$ by their $ij$-th components as below:
\begin{alignat}{2}
A_{ij}&=\frac12\nabla_{\bu}\bF^j\cdot\bu_{x_i}\,\qquad 1\le i,j\le d,\\
B_{ij}&=\frac12\nabla_{\bu}\bF^i\cdot\bu_{x_j}\,\qquad 1\le i,j\le d,
\end{alignat}
where $\bF^i$ and $\bF^j$ denote the $i$-th and $j$-th components of $\bF$, respectively.

Then the second term $I_2$ on the right-hand side of \eqref{20220625_1} can be written as
\begin{equation}
I_2 = \mu\,\mathrm{div}(A+B).
\end{equation}

By equations \eqref{20220528_2}--\eqref{20220624_15}, equations \eqref{eqn:stability-uh1}--\eqref{20220624_7}, and the Korn's inequality, we have
\begin{align}\label{20220625_5}
\mathbb{E}\bigl[\|I_2\|_{{\bf L}^2}^2\bigr]&\le C\mathbb{E}\Bigl[\sum\limits_{i=1}^d\sum\limits_{j=1}^d\sum\limits_{k=1}^d\|(\nabla_{\bu}\bF^i\cdot\bu_{x_j})_{x_k}\|_{{\bf L}^2}^2\Bigr]\\
&\le CC_A^2C_{s1}+CC_A^2C_{s3}.\notag
\end{align}

Combining equations \eqref{20220625_4} and \eqref{20220625_5}, the conclusion \eqref{20220624_12} is proved. The conclusion \eqref{20220625_3} can be similarly proved.
\end{proof}

\begin{lemma}\label{lem20220624_2}
Let $(\bu, \bv)$ be a strong solution to equations \eqref{eqn:elastic-wave-equation-uv}--\eqref{20220526_10}. Under the assumptions \eqref{20220528_1}--\eqref{20220528_4}, there holds
\begin{align}
\sup\limits_{0\leq t\leq T}\exc{\normbf{\opL\bv}}+\sup\limits_{0\leq t\leq T}\exc{\normdel{\opL\bu}}\leq& C_{s4},\label{eqn:stability-vh2}
\end{align}
where
\begin{align*}
C_{s4}&=\Big(\exc{\normbf{\opL\bv_0}} + \bb{E}[\lambda\normbf{\mathrm{div}(\opL\bu_0)}+\mu\normbf{\epsilon(\opL\bu_0)}]+CC_A^2(C_{s1}+C_{s3})\big)e^{T}.
\end{align*}
\end{lemma}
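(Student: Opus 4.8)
The plan is to mimic the argument of Lemma~\ref{lem:stability}, Step~2, but now applied one order higher in regularity, using the bounds from Lemma~\ref{lem20220624_1} to control the extra nonlinear terms that appear. Concretely, I would apply It\^o's formula to the two functionals $\bPhi(\bu(\cdot))=\normbf{\opL\bv(\cdot)}$... wait, more precisely to $\bPhi_4(\bv(\cdot))=\normbf{\opL\bv(\cdot)}$ and to $\bPhi_5(\bu(\cdot))=\normdelbig{\opL\bu(\cdot)}$. Since $\td\bu=\bv\,\td t$ (so $\opL\bu$ has no martingale part), the first application gives
\begin{align*}
\normbf{\opL\bv(t)}&=\normbf{\opL\bv_0}+2\int_0^t\big(\opL\bv,\opL(\opL\bu)+\opL\bF[\bu]\big)\td s+\int_0^t\normbf{\opL\bG[\bu]}\td s\\
&\quad+2\int_0^t\big(\opL\bv,\opL\bG[\bu]\td W(s)\big),
\end{align*}
and the second, because $\td(\opL\bu)=\opL\bv\,\td t$ carries no noise, gives simply
\[
\normdel{\opL\bu(t)}=\normdel{\opL\bu_0}+2\int_0^t\Big(\innerprd{\opL\bu}{\opL\bv}\Big)\td s.
\]
Adding these two identities, the troublesome cross term $2\int_0^t(\opL\bv,\opL(\opL\bu))\,\td s$ exactly cancels against $2\int_0^t(\innerprd{\opL\bu}{\opL\bv})\td s$ after integrating by parts (this is the same mechanism that made $I$ disappear in \eqref{eqn:prf-stability-uh2-3}, now using that $\opL\bu\in\bH^2\cap\bH^1_0$ so boundary terms vanish).

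After that cancellation, what remains on the right-hand side is the initial data, the drift term $2\int_0^t(\opL\bv,\opL\bF[\bu])\,\td s$, the It\^o correction $\int_0^t\normbf{\opL\bG[\bu]}\td s$, and the stochastic integral. I would bound the drift term by Cauchy--Schwarz and Young's inequality as $2(\opL\bv,\opL\bF[\bu])\le\normbf{\opL\bv}+\normbf{\opL\bF[\bu]}$, then take expectations and invoke \eqref{20220624_12} from Lemma~\ref{lem20220624_1} to get $\mathbb{E}\int_0^t\normbf{\opL\bF[\bu]}\td s\le T(CC_A^2C_{s1}+CC_A^2C_{s3})$; the It\^o correction term is handled identically via \eqref{20220625_3}. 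The stochastic integral is a martingale with zero expectation. This leaves an inequality of the form $\mathbb{E}[\normbf{\opL\bv(t)}+\normdel{\opL\bu(t)}]\le(\text{initial data})+CC_A^2(C_{s1}+C_{s3})+\int_0^t\mathbb{E}[\normbf{\opL\bv(s)}]\,\td s$, and Gronwall's inequality closes it with the constant $C_{s4}$ as stated.

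The main obstacle — and the place where care is genuinely needed rather than routine — is the justification that the It\^o formula may legitimately be applied to these higher-order functionals, i.e.\ that $\opL\bv$ and $\opL(\opL\bu)=\opL^2\bu$ make sense as $\bL^2$-valued processes and that $\bG[\bu]$ is sufficiently regular that $\opL\bG[\bu]\in\bL^2$ with the integrability needed for the stochastic integral to be a true martingale. This is precisely what Lemma~\ref{lem20220624_1} is designed to supply (it gives $\opL\bF[\bu],\opL\bG[\bu]\in\bL^2(\Omega;\bL^2)$ uniformly in $t$), together with the stability bounds \eqref{eqn:stability-uh1}--\eqref{20220624_7} which furnish $\opL\bu\in\bH^2$ and $\opL\bv\in\bL^2$. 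A secondary technical point is verifying the integration-by-parts identity $(\opL\bv,\opL^2\bu)=\big(\innerprd{\opL\bu}{\opL\bv}\big)$, which requires $\opL\bu\in\bH^2\cap\bH^1_0$ and $\opL\bv\in\bH^1_0$; both follow from the strong-solution regularity and the fact that $\opL$ preserves the homogeneous Dirichlet condition in the relevant sense. Once these regularity prerequisites are in hand, the estimate itself is a direct transcription of Step~2 of Lemma~\ref{lem:stability}.
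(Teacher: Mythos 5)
Your proposal is correct and follows essentially the same route as the paper: the paper applies It\^o's formula directly to the combined functional $\bPhi_4=\normdelbig{\opL\bu(\cdot)}+\normbf{\opL\bv(\cdot)}$ (so the cross-term cancellation you describe is already built into its identity \eqref{eqn:prf-stability-vh2-3}), then bounds the drift and It\^o-correction terms exactly as you do via Lemma \ref{lem20220624_1}, kills the martingale in expectation, and closes with Gronwall. The only difference is cosmetic (two separate It\^o applications added together versus one combined application), plus your additional—and reasonable—remarks on the regularity needed to justify the It\^o formula and the integration by parts.
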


\begin{proof}
Applying It\^o's formula to $\commentone{\bPhi_4\big(\bu(\cdot),\bv(\cdot)\big)}=\normdelbig{\opL\bu(\cdot)}+\normbf{\opL\bv(\cdot)}$, we obtain
\begin{align}\label{eqn:prf-stability-vh2-3}
&\normdelbig{\opL\bu(t)}+\normbf{\opL\bv(t)}\\
&\qquad =\normdel{\opL\bu_0}+\normbf{\opL\bv_0}+2\int_{0}^{t}\big(\opL\bF[\bu], \opL\bv\big)\td s\notag\\
&\qquad\quad +\int_{0}^{t}\normbf{\opL\bG[\bu]}\td s+2\int_{0}^{t}\big(\opL\bG[\bu]\td W(s), \opL\bv\big)\notag\\
&\qquad:= \normdel{\opL\bu_0}+\normbf{\opL\bv_0}+ I_1 + I_2 + I_3.\notag
\end{align}
 
By equation \eqref{20220624_12}, the expectation of the first term $I_1$ can be bounded by
\begin{align}\label{eqn:prf-stability-vh2-4}
\mathbb{E}\Bigl[2\int_{0}^{t}\big(\opL\bF[\bu], \opL\bv\big)\td s\Bigr] 
&\le\int_{0}^{t}\mathbb{E}\bigl[\|\opL\bF[\bu]\|_{\mathbf{L}^2}^2\bigr]ds + \int_{0}^{t}\mathbb{E}\bigl[\normbf{\opL\bv}\bigr]\td s   \\
&\leq CC_A^2C_{s1}+CC_A^2C_{s3}+ \int_{0}^{t}\mathbb{E}\bigl[\normbf{\opL\bv}\bigr]\td s\notag.
\end{align}

By equation \eqref{20220625_3}, the expectation of the second term $I_2$ can be bounded by
\begin{align}\label{eqn:prf-stability-vh2-5}
\mathbb{E}\Bigl[\int_{0}^{t}\normbf{\opL\bG[\bu]}\td s\Bigr]\le CC_A^2C_{s1}+CC_A^2C_{s3}.
\end{align}
		
The third term $I_3$ is a martingale, and $\exc{I_3}=0$. Taking the exceptation on the both side of \eqref{eqn:prf-stability-vh2-3} and using Gronwall's inequality, we have
\begin{align}\label{eqn:prf-stability-vh2-6}
&\exc{\normdel{\opL\bu}}+\exc{\normbf{\opL\bv}}\\
&\qquad \leq\big(\exc{\normbf{\opL\bv_0}} + \bb{E}[\lambda\normbf{\mathrm{div}(\opL\bu_0)}+\mu\normbf{\epsilon(\opL\bu_0)}]+CC_A^2(C_{s1}+C_{s3})\big)e^{T}.\notag
\end{align}
	
The proof of Lemma \ref{lem:stability} is complete.
\end{proof}

Based on Lemma \ref{lem:stability}, we can establish H\"older continuity in time of the solution $\bu$ in $\bf{L}^2$-norm, $\bf{H}^1$-seminorm, and $\bf{H}^2$-seminorm. These results play a key role in the error analysis.
	
\begin{lemma}\label{lem:holder-cont-u}
{\itshape
Let $(\bu, \bv)$ be a strong solution to problem \eqref{eqn:elastic-wave-equation-uv}--\eqref{20220526_10}. Under the assumptions \eqref{20220528_1}--\eqref{20220528_4}, for any $s, t\in[0, T]$, we have
\begin{align}
\exc{\normbfp{\bu(t)-\bu(s)}}\leq& C_{s1}|t-s|^{2p},\label{eqn:holder-cont-u-1}\\
\exc{\normdelpbig{\bu(t)-\bu(s)}}\leq& C_{s2}|t-s|^{2p},\label{eqn:holder-cont-u-2}\\
\exc{\normbf{\opL\big(\bu(t)-\bu(s)\big)}}\leq& C_{s4}|t-s|^2.\label{eqn:holder-cont-u-3}
\end{align}	
}
\end{lemma}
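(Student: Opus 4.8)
The plan is to prove all three H\"older estimates directly from the mild/integral representations of $\bu$ and its derivatives, using the stability bounds of Lemma \ref{lem:stability} and Lemma \ref{lem20220624_2}. For the first estimate \eqref{eqn:holder-cont-u-1}, note that from \eqref{eqn:weakform-1} (equivalently $\td\bu=\bv\,\td t$) we have $\bu(t)-\bu(s)=\int_s^t\bv(\tau)\,\td\tau$ pathwise. Applying H\"older's inequality in time (with exponents $2p$ and $\frac{2p}{2p-1}$) gives $\normbfp{\bu(t)-\bu(s)}\leq |t-s|^{2p-1}\int_s^t\normbfp{\bv(\tau)}\,\td\tau$, and then taking expectations and invoking $\sup_{0\le\tau\le T}\exc{\normbf{\bv(\tau)}}$ — or rather its $p$-th moment analogue, which follows from the same It\^o argument as in Step 1 of Lemma \ref{lem:stability} applied to $\bPhi_1(\bv)=\normbfp{\bv}$ — yields the bound $C_{s1}|t-s|^{2p}$. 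I would remark that the constant $C_{s1}$ here should be understood as the appropriate $2p$-moment stability constant; if the paper only states the $p=1$ version, the cleanest route is to first upgrade Lemma \ref{lem:stability} to $2p$-moments (a routine Burkholder--Davis--Gundy plus Gronwall argument) and then apply the deterministic H\"older inequality in $\tau$.

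For the second estimate \eqref{eqn:holder-cont-u-2}, the same idea works: $\Div(\bu(t)-\bu(s))=\int_s^t\Div\bv(\tau)\,\td\tau$ and $\veps(\bu(t)-\bu(s))=\int_s^t\veps\bv(\tau)\,\td\tau$, so by H\"older in time
\begin{align*}
\normdelpbig{\bu(t)-\bu(s)}\leq |t-s|^{2p-1}\int_s^t\Big(\lambda\normbfp{\Div\bv(\tau)}+\mu\normbfp{\veps\bv(\tau)}\Big)\td\tau,
\end{align*}
and taking expectations and using $\sup_{0\le\tau\le T}\exc{\normdel{\bv(\tau)}}\leq C_{s2}$ (again in its $2p$-moment form) gives $C_{s2}|t-s|^{2p}$. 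The third estimate \eqref{eqn:holder-cont-u-3} is the $p=1$ instance of this pattern applied one derivative higher: since $\opL$ commutes with $\partial_t$ on the strong solution, $\opL(\bu(t)-\bu(s))=\int_s^t\opL\bv(\tau)\,\td\tau$, whence $\normbf{\opL(\bu(t)-\bu(s))}\leq |t-s|\int_s^t\normbf{\opL\bv(\tau)}\,\td\tau$ by Cauchy--Schwarz, and taking expectations and using $\sup_{0\le\tau\le T}\exc{\normbf{\opL\bv(\tau)}}\leq C_{s4}$ from Lemma \ref{lem20220624_2} finishes it.

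The main obstacle, and essentially the only non-cosmetic point, is the $2p$-moment bootstrapping needed for \eqref{eqn:holder-cont-u-1} and \eqref{eqn:holder-cont-u-2}: the stability lemma as stated controls only second moments, so to get the $2p$-th power on the left one must either re-run the It\^o/Gronwall argument with $\bPhi(\bv)=\normbfp{\bv}$ (which introduces extra lower-order terms from $\tD^2\bPhi$ but is handled by Young's inequality and the Lipschitz assumptions \eqref{20220528_3}--\eqref{20220528_4} exactly as before), or invoke a generic higher-moment estimate for this class of SPDEs. I would state this bootstrap as a short preliminary step (or cite it), after which the three H\"older bounds reduce to the elementary H\"older-in-time inequalities above applied to $\bv$, $\nabla\bv$ (more precisely $\Div\bv,\veps\bv$), and $\opL\bv$ respectively. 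Everything else — commuting $\opL$ past the time integral on the strong solution, and bounding the resulting time integrals by the sup-in-$t$ stability constants — is immediate, so no serious difficulty beyond the moment upgrade is anticipated.
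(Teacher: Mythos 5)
Your proposal is correct and follows essentially the same route as the paper: write $\bu(t)-\bu(s)=\int_s^t\bv(\xi)\,\td\xi$ (and its analogues for $\Div$, $\veps$, and $\opL$), apply H\"older/Cauchy--Schwarz in time, and invoke the sup-in-time stability constants $C_{s1}$, $C_{s2}$, $C_{s4}$ from Lemmas \ref{lem:stability} and \ref{lem20220624_2}. Your remark that the case $p>1$ requires a $2p$-moment upgrade of the stability bounds is a fair observation: the paper's one-line chain bounds $\bb{E}\bigl[\bigl(\int_s^t\|\bv\|_{\bL^2}^2\,\td\xi\bigr)^p\bigr]$ by $|t-s|^p\sup_t\bb{E}\bigl[\|\bv\|_{\bL^2}^2\bigr]$, which is only literally valid for $p=1$, so your explicit bootstrap step makes the argument more honest without changing its substance.
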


\begin{proof}\label{prf:holder-cont-u}
Note that $\bu(t)-\bu(s)=\int_{s}^{t}\bv(\xi)\td\xi$, and then we get
\begin{align}\label{eqn:prf-holder-cont-u-1}
\exc{\normbfp{\bu(t)-\bu(s)}}&\leq\exc{\normbfp{\left(\int_{s}^{t}|\bv(\xi)|^2\td\xi\right)^{\half}\left(\int_{s}^{t}1\td\xi\right)^{\half}}}\\
&\leq |t-s|^p\exc{\left(\int_{s}^{t}\normbf{\bv(\xi)}\td\xi\right)^p}\notag\\
&\leq \sup\limits_{0\leq t\leq T}\exc{\normbf{\bv(t)}}|t-s|^{2p}.\notag
\end{align}
		
Hence, \eqref{eqn:holder-cont-u-1} holds when applying \eqref{eqn:stability-uh1} in Lemma \ref{lem:stability}. The inequality \eqref{eqn:holder-cont-u-2} and \eqref{eqn:holder-cont-u-3} could be derived similarly.
\end{proof}

The next Lemma establishes some H\"older continuity in time results for the solution $\bv$ with respect to $\bf{L}^2$-norm, $\bf{H}^1$-seminorm and $\bf{H}^2$-seminorm.
	
\begin{lemma}\label{lem:holder-cont-v}
{\itshape
Let $(\bu, \bv)$ be a strong solution to problem \eqref{eqn:elastic-wave-equation-uv}--\eqref{20220526_10}. Under the assumptions \eqref{20220528_1}--\eqref{20220528_4}, for any $s, t\in[0, T]$, we have
\begin{align}
\exc{\normbf{\bv(t)-\bv(s)}}\leq& C_{s5}|t-s|,\label{eqn:holder-cont-v-1}\\
\exc{\normdelbig{\bv(t)-\bv(s)}}\leq& C_{s6}|t-s|,\label{eqn:holder-cont-v-2}\\
\exc{\normbf{\opL\big(\bv(t)-\bv(s)\big)}}\leq& C_{s7}|t-s|,\label{eqn:holder-cont-v-3}
\end{align}
where
\begin{align*}
C_{s5}&=e^T(CC_B^2C_{s1}+4C_A^2),\\
C_{s6}&=e^T(CC_A^2C_{s1}+CC_B^2C_{s1}+2C_A^2),\\
C_{s7}&=CC_A^2(C_{s1}+C_{s2}+C_{s4}).
\end{align*}
}
\end{lemma}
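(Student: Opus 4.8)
The plan is to mimic the three-step structure of Lemma~\ref{lem:stability}, but now estimating time increments rather than pointwise-in-time quantities. For \eqref{eqn:holder-cont-v-1}, I would start from the integrated form \eqref{eqn:weakform-2}: for $s<t$,
\[
\bv(t)-\bv(s)=\int_s^t\big(\opL\bu(\tau)+\bF[\bu(\tau)]\big)\td\tau+\int_s^t\bG[\bu(\tau)]\td W(\tau).
\]
Taking $\bL^2$-norms, squaring, and using $(a+b+c)^2\le 3(a^2+b^2+c^2)$, the two Bochner integrals are handled by Cauchy--Schwarz in time (picking up a factor $|t-s|$) together with the stability bounds \eqref{eqn:stability-uh1}--\eqref{eqn:stability-vh1}, while the stochastic integral is handled by the It\^o isometry, which again produces $\int_s^t\bb{E}[\normbf{\bG[\bu(\tau)]}]\td\tau\le C|t-s|$ after using \eqref{20220528_4}, \eqref{20220528_1}, Poincar\'e and Korn together with \eqref{eqn:stability-uh1}. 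This gives \eqref{eqn:holder-cont-v-1} directly; the Gronwall-type exponential in $C_{s5}$ comes from collecting the $C_B^2$-dependent terms.

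For \eqref{eqn:holder-cont-v-2} I would apply the operator $(\lambda\Div(\cdot)\bf{I}+\mu\veps(\cdot))$-induced energy seminorm to the same increment identity, i.e. estimate $\normdelbig{\bv(t)-\bv(s)}$. The increment of $\opL\bu$ contributes $\int_s^t$ of $\normdel{\opL\bu(\tau)}$-type terms, controlled by $C_{s4}$ from Lemma~\ref{lem20220624_2}; the increment of $\bF[\bu]$ is controlled using \eqref{20220528_2} and Lemma~\ref{lem20220624_1}-type bounds on the first derivatives of $\bF$ applied in the energy seminorm, giving $C_A^2C_{s1}$-terms; and the stochastic increment is controlled by the It\^o isometry applied to $\normdelbig{\bG[\bu(\tau)]}$, again using \eqref{20220528_2} and Korn. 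The remaining estimate \eqref{eqn:holder-cont-v-3} is the most delicate: here I would apply $\opL$ (second spatial derivatives) to the increment, so that $\bG[\bu(\tau)]$ must be differentiated twice in space; this is exactly what Lemma~\ref{lem20220624_1} was designed for, giving $\bb{E}[\|\opL\bG[\bu]\|_{\bL^2}^2]\le CC_A^2(C_{s1}+C_{s3})$, and likewise $\bb{E}[\|\opL\bF[\bu]\|_{\bL^2}^2]$ is bounded; combined with $\int_s^t\bb{E}[\normbf{\opL(\opL\bu)}]\td\tau$, controlled by $C_{s4}$, and $\int_s^t\bb{E}[\normbf{\opL\bv}]\,\td\tau$, also controlled by $C_{s4}$, the It\^o isometry and Cauchy--Schwarz in time yield the bound $C_{s7}|t-s|$ with $C_{s7}=CC_A^2(C_{s1}+C_{s2}+C_{s4})$.

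The main obstacle I anticipate is the third estimate \eqref{eqn:holder-cont-v-3}, specifically controlling $\opL\bF[\bu]$ and $\opL\bG[\bu]$ in $\bL^2$: these require two spatial derivatives of nonlinear superposition operators, hence the chain-rule expansions already carried out in Lemma~\ref{lem20220624_1}, which bring in $\nabla^2\bu$ and products of $\nabla\bu$ with itself, and therefore need the second-derivative assumption \eqref{20220624_15} on $\bF$ (and its analog for $\bG$) plus the $\bH^2$-type stability bounds \eqref{20220624_7}. Once those inputs are granted, the time-increment estimates themselves are routine: Cauchy--Schwarz in $\td\tau$ for the drift parts, It\^o isometry for the diffusion part, and no Gronwall is actually needed for \eqref{eqn:holder-cont-v-3} since the right-hand side no longer involves the unknown increment. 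I would present \eqref{eqn:holder-cont-v-1} in full and remark that \eqref{eqn:holder-cont-v-2}--\eqref{eqn:holder-cont-v-3} follow by the same argument applied to the higher-order energy seminorms, invoking Lemmas~\ref{lem20220624_1} and~\ref{lem20220624_2} in place of Lemma~\ref{lem:stability}.
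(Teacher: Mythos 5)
Your overall strategy is genuinely different from the paper's: you estimate the increment directly from the Duhamel-type identity $\bv(t)-\bv(s)=\int_s^t\bigl(\opL\bu+\bF[\bu]\bigr)\,\td\tau+\int_s^t\bG[\bu]\,\td W$ using Cauchy--Schwarz in time plus the It\^o isometry, whereas the paper applies It\^o's formula to paired energy functionals (e.g.\ $\normbf{\bv(\cdot)-\bv(s)}$ together with $\normdelbig{\bu(\cdot)-\bu(s)}$) so that the elastic cross term cancels between the two expansions, and then closes with Gronwall. For \eqref{eqn:holder-cont-v-1} and \eqref{eqn:holder-cont-v-2} your route does work, because the one extra spatial derivative of $\bu$ it demands is available: $\exc{\normbf{\opL\bu}}\leq C_{s2}$ from \eqref{eqn:stability-vh1}, and $\exc{\normdel{\opL\bu}}\leq C_{s4}$ from Lemma \ref{lem20220624_2}. (It yields constants involving $C_{s2}$, resp.\ $C_{s4}$, rather than the stated $C_{s5}$, $C_{s6}$, but since all constants here are generic upper bounds this is cosmetic.)

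The genuine gap is in \eqref{eqn:holder-cont-v-3}. Your direct approach produces the drift term $\int_s^t\opL(\opL\bu)\,\td\tau$, and you assert that $\exc{\normbf{\opL(\opL\bu)}}$ is ``controlled by $C_{s4}$''. It is not: $C_{s4}$ bounds $\exc{\normbf{\opL\bv}}$ and $\exc{\normdel{\opL\bu}}$, i.e.\ third-order spatial derivatives of $\bu$, whereas $\opL^2\bu=\Div\sigma(\opL\bu)$ involves fourth-order derivatives, for which no stability estimate is proved anywhere in the paper (and which would require additional regularity assumptions on the data). This is precisely the term the paper's energy argument is designed to eliminate: combining the It\^o expansions of $\normdelbig{\opL\bu(\cdot)-\opL\bu(s)}$ and $\normbf{\opL\bv(\cdot)-\opL\bv(s)}$ makes the $\opL^2\bu$ cross terms cancel after integration by parts, leaving only $\opL\bF[\bu]$ and $\opL\bG[\bu]$ (bounded via Lemma \ref{lem20220624_1}, exactly as you say) together with the increment itself, which is then absorbed by Gronwall. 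So, contrary to your closing remark, Gronwall \emph{is} needed for \eqref{eqn:holder-cont-v-3} in the argument that actually closes; to repair your proof you should adopt the cancellation structure for the third estimate (and, for uniformity of the constants, you could do the same for the first two).
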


\begin{proof}\label{prf:holder-cont-v}
{\itshape Step 1:} Fix $s\geq0$. Applying It\^o's formula to $\bPhi_5(\bu(\cdot)):=\lambda\|\mathrm{div}\bigl(\bu(\cdot)-\bu(s)\bigr)\|_{{\bf L}^2}^2+\mu\|\epsilon\bigl(\bu(\cdot)-\bu(s)\bigr)\|_{{\bf L}^2}^2$, we have
\begin{align}\label{eqn:prf-holder-cont-vl2-1}
&\normdelbig{\bu(t)-\bu(s)}\\
&\qquad =2\int_{s}^{t}\Big(\innerprd{\bu(\xi)-\bu(s)}{\bv(\xi)}\Big)\td\xi.\notag
\end{align}
		
Applying It\^o's formula to $\bPhi_6(\bv(\cdot)):=\normbf{\bv(\cdot)-\bv(s)}$ and using \eqref{eqn:prf-holder-cont-vl2-1}, we get
\begin{align}\label{eqn:prf-holder-cont-vl2-2}
&\normbf{\bv(t)-\bv(s)}+\normdelbig{\bu(t)-\bu(s)}\\
&\qquad= 2\int_{s}^{t}\big(\bF[\bu(\xi)], \bv(\xi)-\bv(s)\big)\td\xi + \int_{s}^{t}\normbf{\bG[\bu(\xi)]}\td\xi\notag\\
&\qquad\quad+2\int_{s}^{t}\big(\bG[\bu(\xi)], \bv(\xi)-\bv(s)\big)\td W(\xi)\notag\\
&\qquad:= I_1 + I_2 + I_3.\notag
\end{align}
		
Similar to the analysis in \eqref{eqn:prf-stability-uh1-5}--\eqref{eqn:prf-stability-uh1-6}, and by equation \eqref{eqn:stability-uh1}, we have
\begin{align}\label{20220603_1}
&\int_{s}^{t}\normbf{\bF[\bu(\xi)]}\td\xi+\int_{s}^{t}\normbf{\bG[\bu(\xi)]}\td\xi\\
&\qquad \le CC_B^2\int_s^t\big(\normdel{\bu}\big)\td\xi+4C_A^2|t-s|\notag\\
&\qquad\le(CC_B^2C_{s1}+4C_A^2)|t-s|\notag.
\end{align}

Then the first term and the second term on the right-hand side of \eqref{eqn:prf-holder-cont-vl2-2} can be bounded by
\begin{align}\label{eqn:prf-holder-cont-vl2-3}
&2\int_{s}^{t}\big(\bF[\bu(\xi)], \bv(\xi)-\bv(s)\big)\td\xi + \int_{s}^{t}\normbf{\bG[\bu(\xi)]}\td\xi\\
&\qquad\leq\int_{s}^{t}\normbf{\bv(\xi)-\bv(s)}\td\xi+\int_{s}^{t}\big(\normbf{\bF[\bu(\xi)]}+\normbf{\bG[\bu(\xi)]}\big)\td\xi\notag\\
&\qquad\leq\int_{s}^{t}\normbf{\bv(\xi)-\bv(s)}\td\xi+(CC_B^2C_{s1}+4C_A^2)|t-s|.\notag
\end{align}

The third term is a martingale and $\exc{\int_{s}^{t}\big(\bG[\bu(\xi)], \bv(\xi)-\bv(s)}=0$. Taking the expectation on both sides of \eqref{eqn:prf-holder-cont-vl2-2}, we get
\begin{align}\label{eqn:prf-holder-cont-vl2-4}
&\exc{\normbf{\bv(t)-\bv(s)}}+\exc{\normdelbig{\bu(t)-\bu(s)}}\\
&\qquad\leq\int_{s}^{t}\exc{\normbf{\bv(\xi)-\bv(s)}}d\xi+(CC_B^2C_{s1}+4C_A^2)|t-s|.\notag
\end{align}

Using Gronwall's inequality yields
\begin{align}\label{eqn:prf-holder-cont-vl2-5}
&\exc{\normbf{\bv(t)-\bv(s)}}+\exc{\normdelbig{\bu(t)-\bu(s)}}\\
&\qquad\leq e^T(CC_B^2C_{s1}+4C_A^2)|t-s|.\notag
\end{align}
		
{\itshape Step 2:} Fix $s\geq 0$. Applying It\^o's formula to $\bPhi_7\big(\bu(\cdot)\big)=\normbf{\opL\bu(\cdot)-\opL\bu(s)}$ and using integration by parts yield
\begin{equation}\label{eqn:prf-holder-cont-vh1-1}
\normbf{\opL\bu(t)-\opL\bu(s)}=2\int_{s}^{t}\big(\opL\bu(\xi)-\opL\bu(s), \opL\bv(\xi)\big)\td\xi.
\end{equation}
	
Applying It\^o's formula to $\bPhi_8\big(\bv(\cdot)\big)=\normdelbig{\bv(\cdot)-\bv(s)}$ and using \eqref{eqn:prf-holder-cont-vh1-1} yield
\begin{align}\label{eqn:prf-holder-cont-vh1-2}
&\normbf{\opL\bu(t)-\opL\bu(s)}+\normdelbig{\bv(\cdot)-\bv(s)}\\
&\quad \leq2\int_{s}^{t}\Bigl(\lambda\big(\Div(\bv(\xi)-\bv(s)), \Div(\bF[\bu(\xi)])\big)\Bigr.\notag\\
&\qquad\quad + \Bigl.\mu\big(\veps(\bv(\xi)-\bv(s)), \veps(\bF[\bu(\xi)])\big)\Bigr)\td\xi+ \int_{s}^{t}\normbf{\bG[\bu(\xi)]}\td\xi\notag\\
&\qquad\quad  +2\int_{s}^{t}\lambda\big(\Div\big(\bv(\xi)-\bv(s)\big)\td W(\xi),\Div(\bG[\bu(\xi)])\big)\td\xi\notag\\
&\qquad\quad + 2\int_{s}^{t}\mu\big(\veps\big(\bv(\xi)-\bv(s)\big)\td W(\xi),\veps(\bG[\bu(\xi)])\big)\td\xi\notag\\
&\quad :=I_1+I_2+I_3+I_4.\notag
\end{align}
	
By equations \eqref{20220528_2} and \eqref{eqn:stability-uh1}, and the Korn's inequality, the first term $I_1$ can be bounded by
\begin{align}\label{eqn:prf-holder-cont-vh1-3}
&2\int_{s}^{t}\Big(\innerprd{\bv(\xi)-\bv(s)}{\bF[\bu(\xi)]}\Big)\td\xi\\
&\qquad\leq\int_{s}^{t}\lambda\normbf{\Div\big(\bF[\bu(\xi)]\big)}\td\xi+\int_{s}^{t}\mu\normbf{\veps\big(\bF[\bu(\xi)]\big)}\td\xi\notag\\
&\qquad\quad +\int_{s}^{t}\big(\normdelbig{\bv(\xi)-\bv(s)}\big)\td\xi\notag\\
&\qquad\leq CC_A^2\int_{s}^{t}\big(\normdelbig{\bu(\xi)}\big)\td\xi\notag\\
&\qquad\quad +\int_{s}^{t}\lambda\normbf{\Div\big(\bv(\xi)-\bv(s)\big)}\td\xi+\int_{s}^{t}\mu\normbf{\veps\big(\bv(\xi)-\bv(s)\big)}\td\xi\notag\\
&\qquad\leq \int_{s}^{t}\big(\normdelbig{\bv(\xi)-\bv(s)}\big)\td\xi+CC_A^2C_{s1}|t-s|.\notag
\end{align}

Similar to equation \eqref{eqn:prf-stability-uh1-6}, by  Poincar\'{e} inequality,  Korn's inequality, and equations \eqref{20220528_1}, \eqref{20220528_4} and \eqref{eqn:stability-uh1}, the second term $I_2$ can be bounded by
\begin{align}\label{20220610_1}
\int_{s}^{t}\normbf{\bG[\bu]}\td s\leq&CC_B^2\int_{s}^{t}\normdel{\bu}\td s+2C_A^2|t-s|\\
\le&(CC_B^2C_{s1}+2C_A^2)|t-s|.\notag
\end{align}

The last two terms $I_3$ and $I_4$ are martingales, so $\exc{I_3+I_4}=0$. Taking the expectation on both sides of equation \eqref{eqn:prf-holder-cont-vh1-2} and using \eqref{eqn:prf-holder-cont-vh1-3}-\eqref{20220610_1} give
\begin{align}\label{eqn:prf-holder-cont-vh1-5}
&\exc{\normbf{\opL\bu(t)-\opL\bu(s)}}+\exc{\normdelbig{\bv(\cdot)-\bv(s)}}\\
&\qquad\leq\int_{s}^{t}\exc{\normdelbig{\bv(\xi)-\bv(s)}}\td\xi\notag\\
&\qquad\quad+(CC_A^2C_{s1}+CC_B^2C_{s1}+2C_A^2)|t-s|.\notag
\end{align}
	
Then by Gronwall's inequality we obtain 
\begin{align}\label{eqn:prf-holder-cont-vh1-6}
&\exc{\normdelbig{\bv(t)-\bv(s)}} \\
&\qquad\leq e^T(CC_A^2C_{s1}+CC_B^2C_{s1}+2C_A^2)|t-s|.\notag
\end{align}
	
{\itshape Step 3:} Fix $s\geq 0$. Applying It\^o's formula to  
$$\commentone{\bPhi_9\big(\bu(\cdot)\big)}=\normdelbig{\opL\bu(\cdot)-\opL\bu(s)},$$ 
and then we get
\begin{align}\label{eqn:prf-holder-cont-vh2-1}
&\normdelbig{\opL\bu(t)-\opL\bu(s)}\\
&\qquad= 2\int_{s}^{t}\Big(\lambda\big(\Div(\opL\bu(\xi)-\opL\bu(s)),\Div(\opL\bv(\xi)) \big)\Big.\notag\\
&\qquad\quad+\Big.\mu\big(\veps(\opL\bu(\xi)-\opL\bu(s)),\veps(\opL\bv(\xi)) \big)\Big)\td\xi.\notag
\end{align}

Applying It\^o's formula to $\commentone{\bPhi_{10}\big(\bv(\cdot)\big)}=\normbf{\opL\bv(\cdot)-\opL\bv(s)}$, and using integration by parts and equation \eqref{eqn:prf-holder-cont-vh2-1}, we have
\begin{align}\label{eqn:prf-holder-cont-vh2-2}
&\normbf{\opL\bv(t)-\opL\bv(s)}+\normdelbig{\opL\bu(t)-\opL\bu(s)}\\
&\qquad = 2\int_{s}^{t}\big(\opL\bv(\xi)-\opL\bv(s), \opL\bF[\bu(\xi)]\big)\td\xi + \int_{s}^{t}\normbf{\opL\bG[\bu(\xi)]}\td\xi\notag\\
&\qquad\quad + 2\int_{s}^{t}\big(\opL\bv(\xi)-\opL\bv(s), \opL\bG[\bu(\xi)]\td W(\xi)\big)\notag\\
&\qquad := I_1 + I_2 + I_3.\notag
\end{align}
	
Similar to the estimation of equation \eqref{eqn:prf-stability-vh2-5}. we have
\begin{align}\label{20220610_3}
&\int_{s}^{t}\normbf{\opL\bF[\bu(\xi)]}+\normbf{\opL\bG[\bu(\xi)]}\td s\\
&\qquad \leq {CC_A^2}\int_{s}^{t}\lambda\normbf{\Div(\opL\bu)}+\mu\normbf{\veps(\opL\bu)}+\normbf{\opL\bu}\td s\notag\\
&\qquad\quad+{CC_A^2}\int_{s}^{t}\lambda\normbf{\Div(\bu)}+\mu\normbf{\veps(\bu)}\td s\notag\\
&\qquad \leq CC_A^2(C_{s1}+C_{s2}+C_{s4})|t-s|.\notag
\end{align}

By  equation \eqref{20220610_3}, the first and second term on the right-hand side of \eqref{eqn:prf-holder-cont-vh2-2} could be written as
\begin{align}\label{eqn:prf-holder-cont-vh2-3}
&2\int_{s}^{t}\big(\opL\bv(\xi)-\opL\bv(s), \opL\bF[\bu(\xi)]\big)\td\xi+\int_{s}^{t}\normbf{\opL\bG[\bu(\xi)]}\td\xi\\
&\qquad \leq \int_{s}^{t}\normbf{\opL\bF[\bu(\xi)]}+\normbf{\opL\bG[\bu(\xi)]}\td\xi\notag+ \int_{s}^{t}\normbf{\opL\bv(\xi)-\opL\bv(s)}\td\xi\notag\\
&\qquad\leq CC_A^2(C_{s1}+C_{s2}+C_{s4})|t-s|+ \int_{s}^{t}\normbf{\opL\bv(\xi)-\opL\bv(s)}\td\xi.\notag
\end{align}
		
The third term $I_3$ is a martingale and $\exc{I_3}=0$. Taking the exceptation on both sides of \eqref{eqn:prf-holder-cont-vh2-2}, we get
\begin{align}\label{eqn:prf-holder-cont-vh2-4}
&\exc{\normbf{\opL\bv(t)-\opL\bv(s)}}\\
&\qquad +\bb{E}\left[\lambda\normbf{\Div(\opL\bu(t)-\opL\bu(s))}+\mu\normbf{\veps(\opL\bu(t)-\opL\bu(s))}\right]\notag\\
&\ \leq CC_A^2(C_{s1}+C_{s2}+C_{s4})|t-s|+\int_{s}^{t}\exc{\normbf{\opL\bv(\xi)-\opL\bv(s)}}\td\xi.\notag
\end{align}
	
Then the Gronwall's inequality yields
\begin{equation}\label{eqn:prf-holder-cont-vh2-5}
\begin{aligned}
\exc{\normbf{\opL\bv(t)-\opL\bv(s)}}\leq CC_A^2(C_{s1}+C_{s2}+C_{s4})|t-s|.
\end{aligned}
\end{equation}

The proof is complete.
\end{proof}

\section{Semi-discretization in time}\label{sec:time-discrete}
In this section we propose a time semi-discrete scheme to approximate the nonlinear stochastic elastic wave equations. The goals are to prove some stability results and to establish the error estimates.

\subsection{Time semi-discrete scheme}\label{ssec:time-semiform}
Let $0<k<<T$ and $t_{n}:=nk$ for $n=0,1,2,\cdots,N$ be uniform meshes with size $k$ on the interval $[0,T]$.
	
\begin{scheme}\label{sch:scheme1}
{\itshape		
Let $\bu^0$ be an $\cal{F}_{t_0}$-measurable and $\bf{H}^1_0$-valued random variable and $\bv^0$ be an \commentone{$\cal{F}_{t_0}$-measurable} and $\bf{H}^1_0$-valued random variable. 
For each $n \ge 1$, find $(\bf{H}^1_0\times \bL^2)$-valued and $\{\cal{F}_{t_{n+1}}\}$-measurable random variables $(\bu^{n+1}, \bv^{n+1})$ such that $\bb{P}$-a.s.
\begin{alignat}{2}\label{eqn:time-semiform}
\big(\bu^{n+1}-\bu^n, \bphi\big) = & k\big(\bv^{n+1}, \bphi\big) 
&&\forall \bphi \in \bL^2,\\  
(\bv^{n+1}-\bv^n, \bpsi) = &\innerprdminusk{\bu^{n+\half}}{\bpsi} &\label{20220529_1}\\
&  + \big(\bG[\bu^n]\commentone{\Delta W_{n}}, \bpsi\big)+ k\big(\bF[\bu^n], \bpsi\big)	&&\forall \bpsi \in \bH^1_0,\notag
\end{alignat}
where \commentone{$\Delta W_{n}:=W(t_{n+1})-W(t_n)$} and $\bu^{n+\half}:= \half\big(\bu^{n+1}+\bu^{n}\big)$.
}
\end{scheme}

\begin{remark}\label{rmk:sch1'}
(a) At each time step, the above scheme is a nonlinear random PDE system for $(\bu^{n+1}, \bv^{n+1})$ whose well-posedness can be proved by a standard fixed point argument based on the stability estimates to be given in the next subsection. 
	
(b) Following \cite{dupont19732}, a possible improvement to Scheme \ref{sch:scheme1} is the following modified scheme: Seeking $(\bf{H}^1_0\times\bL^2)$-valued and $\cal{F}_{t_{n+1}}$-measurable random variables $(\bu^{n+1}, \bv^{n+1})$ such that $\bb{P}$-a.s.
\begin{alignat}{2}\label{20220529_2}
\big(\bu^{n+1}-\bu^n, \bphi\big) = & k\big(\bv^{n+1}, \bphi\big) 
&&\forall \bphi \in \bL^2,\\  
(\bv^{n+1}-\bv^n, \bpsi) = &\innerprdminusk{\bu^{n,\half}}{\bpsi} &\label{20220529_3}\\
&+\big(\bG[\bu^n]\commentone{\Delta W_{n}}, \bpsi\big)+ k\big(\bF[\bu^n], \bpsi\big)	&&\forall \bpsi \in \bH^1_0,\notag
\end{alignat}
where $\bu^{n, \half}:= \half\big(\bu^{n+1}+\bu^{n-1}\big)$.

However, such an improvement could not be realized unless some more involved higher order treatment of the noise term is adopted as demonstrated in \cite{feng2022higher} for the corresponding  stochastic acoustic wave equations. 
\end{remark}

\subsection{Stability analysis of the time semi-discrete scheme}\label{ssec:time-semiform-stability}
Definie the following energy functional:
\begin{equation}\label{eqn:eng-norm}
J^n=J(\bu^{n},\bv^{n}):=\commentone{\frac{1}{2}\Big(\|\bv^{n}\|^{2}_{\bL^{2}}+\lambda\|\Div(\bu^{n})\|^{2}_{\bL^{2}}+\mu\|\veps(\bu^{n})\|^{2}_{\bL^{2}}\Big)}.
\end{equation}

The following lemma gives the estimate for the expectation of the above energy functional.
\begin{lemma}\label{lemma:time-semiform-stab}
Let $I_k:=\{t_n\}^N_{n=0}$ be uniform meshes with size $k$ satisfying $0<k\leq k_0 < T$ and $(\bu^0, \bv^0)=(\bu_0, \bv_0)\commentone{\in(\bH^2\cap\bH^1_0)\times\bH^1_0}$ be given. Then there holds
\begin{equation}\label{eqn:time-semiform-stab}
\max_{1\leq n\leq N}\bb{E}\bigl[J(\bu^{n},\bv^{n})\bigr]\leq \bigl(\exc{J(\bu^0, \bv^0)}+4C_A^2\bigr)e^{CC_B^2}.
\end{equation}
\end{lemma}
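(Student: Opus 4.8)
The natural approach is a discrete energy estimate mimicking Step 1 of Lemma \ref{lem:stability}, the key algebraic identity being that choosing the test function $\bpsi = \bv^{n+1}$ in \eqref{20220529_1} and $\bphi$ suitably in \eqref{eqn:time-semiform} turns the Crank--Nicolson-type term $\innerprdminusk{\bu^{n+\half}}{\bv^{n+1}}$ into an exact telescoping difference of the elastic energy. Concretely, I would first take $\bpsi=\bv^{n+1}$ in \eqref{20220529_1}, use the elementary identity $(a-b,a)=\tfrac12(\|a\|^2-\|b\|^2+\|a-b\|^2)$ for the term $(\bv^{n+1}-\bv^n,\bv^{n+1})$, and observe that $-k\lambda(\Div(\bu^{n+\half}),\Div(\bv^{n+1})) = -\lambda(\Div(\bu^{n+\half}),\Div(\bu^{n+1}-\bu^{n}))$ after invoking \eqref{eqn:time-semiform} with $\bphi$ replaced by the appropriate divergence/strain test object (this step is formal at the level of the weak form, but is exactly what the scheme is designed to make rigorous). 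The bilinear form $a(\bu,\bw):=\lambda(\Div\bu,\Div\bw)+\mu(\veps(\bu),\veps(\bw))$ is symmetric, so $a(\bu^{n+\half},\bu^{n+1}-\bu^{n}) = \tfrac12\bigl(a(\bu^{n+1},\bu^{n+1})-a(\bu^{n},\bu^{n})\bigr)$. Combining these yields the key one-step identity
\begin{align*}
J^{n+1}-J^{n} &= -\tfrac12\|\bv^{n+1}-\bv^{n}\|_{\bL^2}^2 + \big(\bG[\bu^n]\Delta W_n, \bv^{n+1}\big) + k\big(\bF[\bu^n], \bv^{n+1}\big),
\end{align*}
after reorganizing terms; the negative $-\tfrac12\|\bv^{n+1}-\bv^{n}\|_{\bL^2}^2$ is the dissipation that absorbs part of the noise contribution.

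The second main step is to handle the stochastic increment $\big(\bG[\bu^n]\Delta W_n, \bv^{n+1}\big)$. The obstacle here — and I expect this to be \emph{the} main obstacle — is that $\bv^{n+1}$ is \emph{not} $\cal{F}_{t_n}$-measurable, so $\bE[(\bG[\bu^n]\Delta W_n, \bv^{n+1})]$ does not vanish directly. The standard fix is to split $\bv^{n+1} = \bv^n + (\bv^{n+1}-\bv^n)$: the term $\bE[(\bG[\bu^n]\Delta W_n, \bv^n)]=0$ because $\bG[\bu^n]$ and $\bv^n$ are $\cal{F}_{t_n}$-measurable while $\bE[\Delta W_n\mid \cal{F}_{t_n}]=0$; and for the remaining term $(\bG[\bu^n]\Delta W_n, \bv^{n+1}-\bv^n)$ I would use Cauchy--Schwarz and Young's inequality with a weight chosen to absorb $\tfrac14\|\bv^{n+1}-\bv^n\|_{\bL^2}^2$ into the dissipation term from Step 1, leaving a remainder bounded by $C\|\bG[\bu^n]\|_{\bL^2}^2|\Delta W_n|^2$. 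Taking expectations and using the It\^o isometry $\bE[|\Delta W_n|^2]=k$ (together with independence of $|\Delta W_n|^2$ from $\cal{F}_{t_n}$) converts this into $Ck\,\bE[\|\bG[\bu^n]\|_{\bL^2}^2]$. By the linear-growth consequence of \eqref{20220528_1} and \eqref{20220528_4} (with $\bw=0$, plus Poincar\'e and Korn as in \eqref{eqn:prf-stability-uh1-6}), this is bounded by $Ck\bigl(C_B^2\,\bE[J^n] + C_A^2\bigr)$.

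The third step is the drift term: by \eqref{20220528_1}, \eqref{20220528_3}, Poincar\'e and Korn as in \eqref{eqn:prf-stability-uh1-5}, together with Young's inequality, $k\,\bE[(\bF[\bu^n],\bv^{n+1})] \le \tfrac{k}{2}\bE[\|\bF[\bu^n]\|_{\bL^2}^2] + \tfrac{k}{2}\bE[\|\bv^{n+1}\|_{\bL^2}^2] \le Ck\bigl(C_B^2\bE[J^n] + C_A^2\bigr) + k\,\bE[J^{n+1}]$; for small $k\le k_0$ the $k\,\bE[J^{n+1}]$ piece is hidden on the left. Summing the resulting recursion $\bE[J^{n+1}] \le (1+Ck(1+C_B^2))\bE[J^n] + Ck C_A^2$ over $n=0,\dots,m-1$ and applying the discrete Gronwall inequality gives $\max_{1\le n\le N}\bE[J^n] \le \bigl(\bE[J^0] + 4C_A^2\bigr)e^{CC_B^2}$, which is the claimed bound. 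The only delicate points beyond the measurability issue are making sure the weights in the two Young's inequalities truly leave a nonnegative multiple of $\|\bv^{n+1}-\bv^n\|_{\bL^2}^2$ after combining with Step 1, and justifying the divergence/strain "integration by parts at the discrete level" — which is legitimate precisely because \eqref{eqn:time-semiform} holds for all $\bphi\in\bL^2$ and $\Div(\bu^{n+\half}), \veps(\bu^{n+\half})$ generate admissible test functionals against $\bv^{n+1}$.
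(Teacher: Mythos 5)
Your proposal is correct and follows essentially the same route as the paper's proof: test the velocity equation with $\bv^{n+1}$, use the displacement equation to turn the Crank--Nicolson elastic term into a telescoping energy difference (the paper does this by choosing $\bphi=-\opL\bu^{n+\frac12}$, which is equivalent to your bilinear-form manipulation), split the noise term as $\bv^{n+1}=\bv^n+(\bv^{n+1}-\bv^n)$ to exploit the vanishing conditional expectation and absorb the increment into the $\frac12\|\bv^{n+1}-\bv^n\|_{\bL^2}^2$ dissipation via Young's inequality and It\^o's isometry, and close with the discrete Gronwall inequality. The only cosmetic difference is that you explicitly hide the $k\,\bb{E}[J^{n+1}]$ contribution from the drift term on the left for small $k$, a point the paper passes over silently.
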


\begin{proof}\label{prf:time-semiform-stab}
Fix $\omega\in\Omega$ and choose $\bpsi=\bv^{n+1}$ in \eqref{20220529_1}, and then we get
\begin{align}\label{eqn:prf-time-semiform-stab-1}
\half\left[\normbf{\bv^{n+1}}\right.&\left.-\normbf{\bv^n}+\normbf{\bv^{n+1}-\bv^n}\right]\\
&=k\innerlap{\bu^{n+\half}}{\bv^{n+1}}+\big(\bG[\bu^n]\commentone{\Delta W_{n}},\bv^{n+1}\big)+k\big(\bF[\bu^{n}],\bv^{n+1}\big).\notag
\end{align}
	
Choose $\bphi=-\opL\bu^{n+\half}$ in \eqref{eqn:time-semiform}, and then we get
\begin{equation}\label{eqn:prf-time-semiform-stab-2}
(\bu^{n+1}-\bu^n,-\opL\bu^{n+\half})=-k\innerlap{\bu^{n+\half}}{\bv^{n+1}}.
\end{equation}
	
Combining \eqref{eqn:prf-time-semiform-stab-1} and \eqref{eqn:prf-time-semiform-stab-2} and taking the expectation on both sides yield 
\begin{align}\label{eqn:prf-time-semiform-stab-3}
&\exc{(\bu^{n+1}-\bu^n,-\opL\bu^{n+\half})}+\half\left(\exc{\normbf{\bv^{n+1}}}-\exc{\normbf{\bv^n}}\right.\\
&\qquad\qquad \left.+\exc{\normbf{\bv^{n+1}-\bv^n}}\right)\notag\\
&\qquad = \exc{\big(\bG[\bu^n]\commentone{\Delta W_{n}},\bv^{n+1}\big)}+\exc{k\big(\bF[\bu^{n}],\bv^{n+1}\big)}:=I_1 + I_2.\notag
\end{align}

For the first term on the right-hand side, note that $\exc{\big(\bG[\bu^n]\commentone{\Delta W_{n}},\bv^n\big)}=0$. Similar to the estimation of \eqref{eqn:prf-stability-uh1-6}, and by It\^o's isometry, we obtain
\begin{align}\label{eqn:prf-time-semiform-stab-4}
&\exc{\big(\bG[\bu^n]\commentone{\Delta W_{n}},\bv^{n+1}\big)}=\exc{\big(\bG[\bu^n]\commentone{\Delta W_{n}},\bv^{n+1}-\bv^n\big)}\\
&\hskip-2mm\quad\leq\frac{1}{4}\exc{\normbf{\bv^{n+1}-\bv^n}}+k\exc{\normbf{\bG[\bu^n]}}\notag\\
&\hskip-2mm\quad\leq\frac{1}{4}\exc{\normbf{\bv^{n+1}-\bv^n}}+CC_B^2k\exc{\normdel{\bu^n}}+2C_A^2k\notag.
\end{align}

Similar to the estimation of \eqref{eqn:prf-stability-uh1-5}, the second term $I_2$ can be bounded by
\begin{align}\label{eqn:prf-time-semiform-stab-5}
&k\exc{\big(\bF[\bu^n],\bv^{n+1}\big)}\\
&\qquad\leq k\exc{\normbf{\bv^{n+1}}}+CC_B^2k\exc{\normdel{\bu^n}}+2C_A^2k.\notag
\end{align}
	
The left-hand side of \eqref{eqn:prf-time-semiform-stab-3} can be written as
\begin{align}\label{eqn:prf-time-semiform-stab-6}
&\half\mathbb{E}\Bigl[\big(\normbf{\bv^{n+1}}-\normbf{\bv^n}+\normbf{\bv^{n+1}-\bv^n}\big)\Bigr]-\mathbb{E}\Bigl[(\bu^{n+1}-\bu^n,\opL\bu^{n+\half})\Bigr]\\
&\qquad =\half\mathbb{E}\Bigl[(\normbf{\bv^{n+1}}-\normbf{\bv^n}+\normbf{\bv^{n+1}-\bv^n})\Bigr]\notag\\
&\qquad\quad +\mathbb{E}\Bigl[\half\big(\normdel{\bu^{n+1}}\big)\Bigr]\notag\\
&\qquad\quad -\mathbb{E}\Bigl[\half\big(\normdel{\bu^n}\big)\Bigr]\notag\\
&\qquad=\mathbb{E}\Bigl[\energy{n+1}\Bigr]-\mathbb{E}\Bigl[\energy{n}\Bigr]+\half\mathbb{E}\Bigl[\normbf{\bv^{n+1}-\bv^n}\Bigr].\notag
\end{align}	
		
Putting \eqref{eqn:prf-time-semiform-stab-4}-\eqref{eqn:prf-time-semiform-stab-6} together yields
\begin{align}\label{eqn:prf-time-semiform-stab-7}
&\exc{\energy{n+1}}+\frac{1}{4}\sum_{l=0}^{n}\exc{\normbf{\bv^{l+1}-\bv^l}}\\
&\qquad\leq\exc{\energy{0}}+4C_A^2+CC_B^2k\sum_{l=0}^{n}\exc{\energy{l}}.\notag
\end{align}
		
By the discrete Gronwall's inequality, we obtain
\begin{equation}\label{eqn:prf-time-semiform-stab-8}
\exc{\energy{n+1}}\leq \bigl(\exc{J(\bu^0, \bv^0)}+4C_A^2\bigr)e^{CC_B^2}.
\end{equation}
The proof is complete.
\end{proof}

Since previous stability results are not sufficient to establish the convergence results of fully discrete finite element methods, we will prove some stability results in stronger norms. Consider the following energy functional:
\begin{equation}\label{20220604_1}
\widetilde{J}(\bu^n, \bv^n)=\half\big(\normbf{\opL\bu^n}+\normdel{\bv^n}\big),
\end{equation}	
we will establish the stability for the expectation of the above energy functional.	

\begin{lemma}\label{lemma:time-semiform-stab-uh2-vh1}
Let $(\bu^0, \bv^0)=(\bu_0, \bv_0)\in (\bH^2\cap\bH^1_0)\times\bH^1_0$ be given. The solution $\{(\bu^n, \bv^n);1\leq n\leq N\}$ of \eqref{eqn:time-semiform}--\eqref{20220529_1} satisfies
\begin{equation}\label{eqn:time-semiform-stab-uh2-vh1}
\max_{1\leq n\leq N}\exc{\widetilde{J}(\bu^{n}, \bv^{n})}\leq\left(CC_A^2C_{s1}+\exc{\widetilde{J}(\bu^{0}, \bv^{0})}\right)e^{T}.
\end{equation}
\end{lemma}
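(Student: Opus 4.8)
The plan is to mimic the continuous-level argument of Lemma \ref{lem:stability} (Step 2), but carried out at the discrete level with the Crank--Nicolson-type midpoint averaging $\bu^{n+\half}$, paralleling the energy-stability proof of Lemma \ref{lemma:time-semiform-stab}. First I would test \eqref{20220529_1} with $\bpsi=-\opL\bv^{n+1}$ (equivalently, test the equation written in terms of $\innerprd{\cdot}{\cdot}$ with $\opL\bu^{n+\half}$ after an integration-by-parts/duality manipulation), obtaining a relation of the form
\begin{equation*}
\innerprd{\bu^{n+\half}}{\bv^{n+1}-\bv^n} = -k\normbf{\opL\bu^{n+\half}} + \innerprd{\opL^{-1}\bG[\bu^n]\Delta W_n + k\,\opL^{-1}\bF[\bu^n]}{\cdots},
\end{equation*}
more directly: test \eqref{eqn:time-semiform} with $\bphi = \opL^2\bu^{n+\half}$ (formally) to get $\big(\opL\bu^{n+1}-\opL\bu^n,\opL\bu^{n+\half}\big)=k\big(\opL\bu^{n+\half},\opL\bv^{n+1}\big)$, which telescopes the $\half\normbf{\opL\bu}$ term; and test \eqref{20220529_1} with $\bpsi$ chosen so that the $\innerprd{\cdot}{\cdot}$-inner-product of $\bv^{n+1}-\bv^n$ with $\bv^{n+1}$ appears, producing the discrete identity
\begin{equation*}
\exc{\widetilde J(\bu^{n+1},\bv^{n+1})} - \exc{\widetilde J(\bu^{n},\bv^{n})} + (\text{positive jump terms}) = \exc{I_1} + \exc{I_2} + \exc{I_3},
\end{equation*}
where $I_1$ collects the $\bF$-contribution $\innerprd{\bF[\bu^n]}{\bv^{n+1}}$ (times $k$), $I_2$ collects the Tr/It\^o-isometry term $\normdel{\bG[\bu^n]}$ (times $k$, from $\bb E[\Delta W_n^2]=k$), and $I_3$ is the martingale-type cross term $\big(\bG[\bu^n]\Delta W_n,\ldots\big)$ whose expectation vanishes.

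Next I would bound $\exc{I_1}$ and $\exc{I_2}$. For $I_1$: by Cauchy--Schwarz in the $\innerprd{\cdot}{\cdot}$ inner product, $k\,\innerprd{\bF[\bu^n]}{\bv^{n+1}} \le \tfrac k2\normdel{\bF[\bu^n]} + \tfrac k2\normdel{\bv^{n+1}}$; the first piece is controlled by $CC_A^2 k\,(\lambda\normbf{\Div\bu^n}+\mu\normbf{\veps\bu^n})$ using \eqref{20220528_2} and Korn's inequality exactly as in \eqref{eqn:prf-stability-uh2-4}, and the second is absorbed into the Gronwall sum $k\sum_l\exc{\normdel{\bv^{l}}}\le 2k\sum_l\exc{\widetilde J(\bu^l,\bv^l)}$. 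For $I_2$: $k\normdel{\bG[\bu^n]}\le CC_A^2 k(\lambda\normbf{\Div\bu^n}+\mu\normbf{\veps\bu^n})$ again by \eqref{20220528_2} and Korn, as in \eqref{eqn:prf-stability-uh2-5}. Crucially, the quantity $\lambda\normbf{\Div\bu^n}+\mu\normbf{\veps\bu^n}$ that appears on the right is bounded, after taking expectation and summing, by $k\sum_l \exc{\normdel{\bu^l}}\le k\sum_l 2\exc{J(\bu^l,\bv^l)}\le CT\,C_{s1}$ via Lemma \ref{lemma:time-semiform-stab} (this is where $C_{s1}$ enters the final constant). For $I_3$, I would handle the $\Delta W_n$ cross term by the standard trick of replacing $\bv^{n+1}$ (or $\opL\bv^{n+1}$) by $\bv^{n+1}-\bv^n$ inside the inner product — the difference integrates to zero by $\{\mathcal F_{t_n}\}$-adaptedness of $\bG[\bu^n]$ — and then use Cauchy--Schwarz plus It\^o isometry to split it as $\tfrac14\exc{\normdel{\bv^{n+1}-\bv^n}} + Ck\,\exc{\normdel{\bG[\bu^n]}}$; the jump term is absorbed by the positive jump terms on the left, and the remaining term is again of the $I_2$ type.

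Assembling these estimates gives, after summing over $n$,
\begin{equation*}
\exc{\widetilde J(\bu^{n+1},\bv^{n+1})} \le \exc{\widetilde J(\bu^0,\bv^0)} + CC_A^2\,k\sum_{l=0}^{n}\exc{\normdel{\bu^l}} + C\,k\sum_{l=0}^{n}\exc{\widetilde J(\bu^l,\bv^l)},
\end{equation*}
and replacing the middle sum by its bound $CC_A^2 C_{s1}$ (from Lemma \ref{lemma:time-semiform-stab}, absorbing the $T$ into $C$) and applying the discrete Gronwall inequality yields \eqref{eqn:time-semiform-stab-uh2-vh1} with the stated constant $(CC_A^2C_{s1}+\exc{\widetilde J(\bu^0,\bv^0)})e^T$. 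The main obstacle I anticipate is the discrete duality/integration-by-parts step: making rigorous the pairing that produces $\big(\opL\bu^{n+1}-\opL\bu^n,\opL\bu^{n+\half}\big)$ and $\big(\opL\bv^{n+1},\opL\bu^{n+\half}\big)$, since $\opL\bu^{n+\half}$ need only lie in $\bL^2$ (not $\bH^1_0$) so it is not a legal test function in \eqref{20220529_1} directly; one resolves this either by noting $(\bu^n,\bv^n)$ actually enjoys extra regularity (the scheme produces $\bu^{n+1}\in\bH^2\cap\bH^1_0$ when data are regular, arguing inductively using the elliptic regularity of $\opL$), or by working with a Galerkin/spectral truncation and passing to the limit. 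Everything else is a routine transcription of the continuous estimates in Lemma \ref{lem:stability} to the discrete midpoint scheme.
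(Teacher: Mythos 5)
Your proposal follows essentially the same route as the paper's proof: test \eqref{eqn:time-semiform} with $\bphi=\opL^2\bu^{n+\half}$ and \eqref{20220529_1} with $\bpsi=-\opL\bv^{n+1}$ to telescope $\widetilde J$, kill the noise cross term by replacing $\bv^{n+1}$ with $\bv^{n+1}-\bv^n$ and applying It\^o's isometry, bound the $\bF$- and $\bG$-contributions via \eqref{20220528_2} and Korn's inequality, and close with the discrete Gronwall inequality. Your added remarks on the admissibility of $\opL^2\bu^{n+\half}$ as a test function and on sourcing the $C_{s1}$ bound from the discrete stability lemma rather than the continuous one are, if anything, more careful than the paper's presentation, but they do not change the argument.
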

	
\begin{proof}\label{prf:time-semiform-stab-uh2-vh1}
By choosing $\bphi=\opL^2\bu^{n+\half}$ in \eqref{eqn:time-semiform}, we get
\begin{equation}\label{eqn:prf-time-semiform-stab-uh2-vh1-1}
(\opL\bu^{n+1}-\opL\bu^n, \opL\bu^{n+\half})=k(\opL\bv^{n+1}, \opL\bu^{n+\half}).
\end{equation}
	
By choosing $\bpsi=-\opL\bv^{n+1}$ in \eqref{20220529_1}, we get
\begin{align}\label{eqn:prf-time-semiform-stab-uh2-vh1-2}
&(\bv^{n+1}-\bv^{n}, -\opL\bv^{n+1})=k(\opL\bu^{n+\half}, -\opL\bv^{n+1})\\
&\qquad +\lambda\big(\Div(\bG[\bu^n]\commentone{\Delta W_{n}}), \Div(\bv^{n+1})\big) + \mu\big(\veps(\bG[\bu^n]\commentone{\Delta W_{n}}), \veps(\bv^{n+1})\big)\notag\\
&\qquad +k\lambda\big(\Div(\bF[\bu^n]), \Div(\bv^{n+1})\big)+\mu\big(\veps(\bF[\bu^n]), \veps(\bv^{n+1})\big).\notag
\end{align}
	
Combining \eqref{eqn:prf-time-semiform-stab-uh2-vh1-1} and \eqref{eqn:prf-time-semiform-stab-uh2-vh1-2} yields
\begin{align}\label{eqn:prf-time-semiform-stab-uh2-vh1-3}
&\widetilde{J}\big(\bu^{n+1}, \bv^{n+1})-\widetilde{J}(\bu^{n}, \bv^{n})\\
&\qquad\quad +\half\left(\normdel{\bv^{n+1}-\bv^n}\right)\notag\\
&\qquad=\innerprd{\bG[\bu^n]\commentone{\Delta W_{n}}}{\bv^{n+1}}\notag\\
&\qquad\quad+k\innerprd{\bF[\bu^n]}{\bv^{n+1}}\notag\\
&\qquad:=I_1 + I_2 + I_3 + I_4.\notag
\end{align}
		
Note that
\begin{equation}\label{20220604_2}
\exc{\innerprd{\bG[\bu^n]\commentone{\Delta W_{n}}}{\bv^{n}}}=0.
\end{equation}

Similar to the estimation of equation \eqref{eqn:prf-stability-uh2-5}, and by It\^o's isometry, we have
\begin{align}\label{eqn:prf-time-semiform-stab-uh2-vh1-4}
I_1 + I_2 &\leq \frac{k}{2}\left(\normdel{\bG[\bu^n]}\right)\\
&\qquad+ \frac{1}{2}\left(\normdel{\bv^{n+1}-\bv^n}\right)\notag\\
&\leq \frac{1}{2}\left(\normdel{\bv^{n+1}-\bv^n}\right)+CC_A^2C_{s1}k.\notag
\end{align}
		
Similar to the estimation of equation \eqref{eqn:prf-stability-uh2-4}, we have
\begin{equation}\label{eqn:prf-time-semiform-stab-uh2-vh1-5}
I_3 + I_4 \leq\frac{k}{2}\left(\normdel{\bv^{n+1}}\right)+CC_A^2C_{s1}k.
\end{equation}

 Taking the expectation and summation over n from 0 to $l$ on the both sides of \eqref{eqn:prf-time-semiform-stab-uh2-vh1-3}, using equations \eqref{20220604_2}--\eqref{eqn:prf-time-semiform-stab-uh2-vh1-5}, and then switching $n$ and $l$, we have
\begin{align}\label{eqn:prf-time-semiform-stab-uh2-vh1-6}
&\exc{\widetilde{J}(\bu^{n+1}, \bv^{n+1})}\leq CC_A^2C_{s1}+\exc{\widetilde{J}(\bu^{0}, \bv^{0})}+k\sum\limits_{l=0}^{n}\exc{\tilde{J}(\bu^{l}, \bv^{l})}.
\end{align}
		
Then the discrete Gronwall's inequality yields
\begin{equation}\label{eqn:prf-time-semiform-stab-uh2-vh1-7}
\exc{\widetilde{J}(\bu^{n+1}, \bv^{n+1})}\leq\left(CC_A^2C_{s1}+\exc{\widetilde{J}(\bu^{0}, \bv^{0})}\right)e^{T}.
\end{equation}
	
The proof is complete.
\end{proof}

\subsection{Error estimates for the time semi-discrete scheme}\label{sec:error-estimate-time}
In this section, we derive error estimates in both ${\bf H}^1$-norm and ${\bf L}^2$-norm for the time semi-discrete scheme. 

\begin{theorem}\label{theo:time-semiform-order-h1}
Let $(\bu, \bv)$ be the solution of \eqref{eqn:weakform-1}--\eqref{eqn:weakform-2} and $\{(\bu^n, \bv^n); 1\leq n\leq N\}$ be the solution of \eqref{eqn:time-semiform}--\eqref{20220529_1}. Under the assumptions \eqref{20220528_1}--\eqref{20220528_4}, there holds
\begin{equation}\label{eqn:time-semiform-order-h1}
\max_{1\leq n\leq N}\exc{\normbfh{\bu(t_{n})-\bu^n}+\normbf{\bv(t_{n})-\bv^n}}\leq C_{s6}e^{CC_B^2}k.
\end{equation}
\end{theorem}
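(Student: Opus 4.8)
The plan is to derive an error equation by subtracting Scheme~\ref{sch:scheme1} from the variational formulation \eqref{eqn:weakform-1}--\eqref{eqn:weakform-2} evaluated on the interval $[t_n,t_{n+1}]$. Write $\bferr{u}{n}:=\bu(t_n)-\bu^n$ and $\bferr{v}{n}:=\bv(t_n)-\bv^n$. From \eqref{eqn:weakform-1} we get $(\bferr{u}{n+1}-\bferr{u}{n},\bphi)=\int_{t_n}^{t_{n+1}}(\bv(s)-\bv^{n+1},\bphi)\,\td s = k(\bferr{v}{n+1},\bphi)+(\int_{t_n}^{t_{n+1}}(\bv(s)-\bv(t_{n+1}))\,\td s,\bphi)$, and similarly from \eqref{eqn:weakform-2} the velocity error equation picks up a displacement-consistency term $\innerprdminusk{\bu^{n+\half}}{\bpsi}$ versus $-\int_{t_n}^{t_{n+1}}[\lambda(\Div\bu(s),\Div\bpsi)+\mu(\veps(\bu(s)),\veps(\bpsi))]\,\td s$, a drift term $\int_{t_n}^{t_{n+1}}(\bF[\bu(s)]-\bF[\bu^n],\bpsi)\,\td s$, and a stochastic term $\int_{t_n}^{t_{n+1}}(\bG[\bu(s)]-\bG[\bu^n])\,\td W(s),\bpsi)$.

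Next I would mimic the energy argument of Lemma~\ref{lemma:time-semiform-stab}: choose $\bpsi=\bferr{v}{n+1}$ in the velocity error equation and $\bphi=-\opL\bferr{u}{n+\half}$ in the displacement error equation, so the cross terms involving $\opL\bferr{u}{n+\half}$ cancel and the left side telescopes into $\engerr{n+1}-\engerr{n}+\tfrac12\normbfhalf{\bferr{v}{n+1}-\bferr{v}{n}}^2$ plus the consistency contributions. The Lipschitz assumptions \eqref{20220528_3}--\eqref{20220528_4} let me bound $\bF[\bu(s)]-\bF[\bu^n]$ and $\bG[\bu(s)]-\bG[\bu^n]$ by $C_B$ times the energy-type norm of $\bu(s)-\bu^n \le (\bu(s)-\bu(t_n)) + \bferr{u}{n}$; the first piece is controlled by the H\"older estimates \eqref{eqn:holder-cont-u-1}--\eqref{eqn:holder-cont-u-2} and \eqref{eqn:holder-cont-v-1}--\eqref{eqn:holder-cont-v-2}, giving $O(k)$ (or $O(k^2)$ after integration and squaring) local consistency, while the second piece feeds the Gronwall loop. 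For the stochastic term I would use the martingale property $\exc{(\int_{t_n}^{t_{n+1}}(\bG[\bu(s)]-\bG[\bu^n])\,\td W(s),\bferr{v}{n})}=0$ together with It\^o's isometry on the remaining $(\cdots,\bferr{v}{n+1}-\bferr{v}{n})$ piece, absorbing $\tfrac14\normbfhalf{\bferr{v}{n+1}-\bferr{v}{n}}^2$ on the left. The displacement consistency term $\innerprdminusk{\bu^{n+\half}}{\bferr{v}{n+1}}$ compared against the exact integral must be split as (midpoint-rule/Crank--Nicolson error on the smooth exact solution, bounded via \eqref{eqn:holder-cont-u-2} and \eqref{eqn:holder-cont-u-3}) plus (the part that pairs $\opL$-type quantities of $\bferr{u}{\cdot}$ with $\bferr{v}{n+1}$, handled by Cauchy--Schwarz into the energy).

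After taking expectations and summing over $n$ from $0$ to $m-1$, I expect an inequality of the form $\exc{\engerr{m}} \le \exc{\engerr{0}} + Ck^2 N\cdot(\text{H\"older constants}) + CC_B^2 k\sum_{l=0}^{m-1}\exc{\engerr{l}}$; since $\bu^0=\bu_0,\bv^0=\bv_0$ the initial error vanishes, $Nk=T$, and the discrete Gronwall inequality then yields $\max_m \exc{\engerr{m}} \le C_{s6}e^{CC_B^2}k$, with $C_{s6}$ coming from \eqref{eqn:holder-cont-v-2}. The main obstacle I anticipate is the treatment of the displacement consistency term: the scheme uses the midpoint value $\bu^{n+\half}$ multiplied by $k$ to approximate $\int_{t_n}^{t_{n+1}}\bu(s)\,\td s$, and pinning down that this contributes only $O(k)$ in the energy norm (not $O(k^{1/2})$) requires carefully exploiting the $\bH^2$/$\opL$-regularity H\"older bound \eqref{eqn:holder-cont-u-3} rather than the weaker \eqref{eqn:holder-cont-u-1}; a secondary subtlety is making sure all the low-regularity stochastic increments are paired against $\bferr{v}{n+1}-\bferr{v}{n}$ (so It\^o isometry gives the extra factor of $k$) before any Cauchy--Schwarz is applied, since pairing directly against $\bferr{v}{n+1}$ would lose the order.
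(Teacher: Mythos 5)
Your proposal follows essentially the same route as the paper's proof: the same error equations obtained by subtracting the scheme from the weak formulation on $[t_n,t_{n+1}]$, the same test functions $\bpsi=\bferr{v}{n+1}$ and $\bphi=-\opL\bferr{u}{n+\half}$ producing the telescoping energy identity, the same splitting of the drift and diffusion increments into a H\"older-continuity part and an error part feeding Gronwall, the same use of \eqref{eqn:holder-cont-u-3} for the midpoint consistency term, and the same martingale/It\^o-isometry treatment pairing the noise against $\bferr{v}{n+1}-\bferr{v}{n}$. The two subtleties you flag at the end are precisely the points the paper's estimates for its terms $I_2$ and $I_5$ address, so the proposal is correct and matches the paper's argument.
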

	
\begin{proof}\label{prf:time-semiform-order-h1}
By \eqref{eqn:weakform-1}--\eqref{eqn:weakform-2}, we have
\begin{align}\label{20220529_6}
\big(\bu(t_{n+1})-\bu(t_n), \bphi\big)=&\big(\int_{t_n}^{\commentone{t_{n+1}}}\bv(s)\td s, \bphi\big),\\
\big(\bv(t_{n+1})-\bv(t_n), \bpsi\big)=&\inttn\big(\opL(\bu(s)), \bpsi\big)\td s+\inttn\big(\bF[\bu(s)], \bpsi\big)\td s\label{20220529_7}\\
&+\left(\inttn\bG[\bu(s)]\td W(s), \bpsi\right)\nonumber.
\end{align}
		
By \eqref{eqn:time-semiform}--\eqref{20220529_1}, we have
\begin{alignat}{2}
\big(\bu^{n+1}-\bu^n, \bphi\big) = & k\big(\bv^{n+1}, \bphi\big) 
&&\forall \bphi \in \bL^2,\label{20220529_4}\\  
(\bv^{n+1}-\bv^n, \bpsi) = &\innerprdminusk{\bu^{n+\half}}{\bpsi} &\label{20220529_5}\\
&  + \big(\bG[\bu^n]\Delta_{n+1} W, \bpsi\big)+ k\big(\bF[\bu^n], \bpsi\big)	&&\forall \bpsi \in \bH^1_0.\notag
\end{alignat}

Define notations $\bferr{u}{n}$, $\bferr{v}{n}$, and $\bferr{u}{n+\frac12}$ by
\begin{align*}
\bferr{u}{n}&:=\bu(t_{n+1})-\bu^{n+1}, \quad\bferr{v}{n}:=\bv(t_{n+1})-\bv^{n+1}, \\
\quad\bferr{u}{n+\frac12}&:=\frac{\bu(t_{n+1})+\bu(t_n)}{2}-\bu^{n+\frac12} = \frac{\bferr{u}{n+1}+\bferr{u}{n}}{2}.
\end{align*}

Subtracting \eqref{20220529_4} from \eqref{20220529_6} and \eqref{20220529_5} from \eqref{20220529_7} yield
\begin{align}
&\big(\bferr{u}{n+1}-\bferr{u}{n}, \bphi\big)=\left(\inttn\big(\bv(s)-\bv(t_{n+1})\big)\td s, \bphi\right)+k\big(\bferr{v}{n+1}, \bphi\big),\label{eqn:prf-time-semiform-order-h1-2}\\
&\big(\bferr{v}{n+1}-\bferr{v}{n}, \bpsi\big)=\inttn\left(\opL\Big(\bu(s)-\frac{\bu(t_{n+1})+\bu(t_n)}{2}\Big), \bpsi\right)\td s\label{eqn:prf-time-semiform-order-h1-3}\\
&\quad\qquad\qquad\qquad +k\innerlap{\bferr{u}{n+\half}}{\bpsi}+\inttn\big(\bF[\bu(s)]-\bF[\bu^n], \bpsi\big)\td s\notag\\
&\quad\qquad\qquad\qquad +\left(\inttn\big(\bG[\bu(s)]-\bG[\bu^n]\big)\td W(s), \bpsi\right).\nonumber
\end{align}
		
By choosing $\bphi = -\opL\mbf{e}_\bu^{n+\half}$ in \eqref{eqn:prf-time-semiform-order-h1-2}, we have
\begin{align}
k\innerlap{\mbf{e}_\bu^{n+\half}}{\bferr{v}{n+1}}=&\bigl(\bferr{u}{n+1}-\bferr{u}{n},\opL\bferr{u}{n+\half}\bigr) -\left(\inttn\big(\bv(s)-\bv(t_{n+1})\big)\td s, \opL\bferr{u}{n+\frac12}\right).\label{eqn:prf-time-semiform-order-h1-4}
\end{align}
	
By choosing $\bpsi=\bferr{v}{n+1}$ in \eqref{eqn:prf-time-semiform-order-h1-3}, we obtain 
\begin{align}\label{eqn:prf-time-semiform-order-h1-5}
\big(\bferr{v}{n+1}-\bferr{v}{n}, \bferr{v}{n+1}&\big)=\inttn\left(\opL\Big(\bu(s)-\frac{\bu(t_{n+1})+\bu(t_n)}{2}\Big), \bferr{v}{n+1}\right)\td s\\
&+k\innerlap{\bferr{u}{n+\half}}{\bferr{v}{n+1}}+\inttn\big(\bF[\bu(s)]-\bF[\mbf{u^n}], \bferr{v}{n+1}\big)\td s\notag\\
&+\left(\inttn(\bG[\bu(s)]-\bG[\bu^n])\td W(s), \bferr{v}{n+1}\right).\notag
\end{align}

Combining \eqref{eqn:prf-time-semiform-order-h1-4}-\eqref{eqn:prf-time-semiform-order-h1-5} yields
\begin{align}\label{eqn:prf-time-semiform-order-h1-6}
&\engerr{n+1}-\engerr{n}+\half\normbf{\bferr{v}{n+1}-\bferr{v}{n}}\\
&\qquad=-\inttn\innerlap{\bferr{u}{n+\frac12}}{\bv(s)-\bv(t_{n+1})}\td s\notag\\
&\qquad\quad+\inttn\left(\opL\Big(\bu(s)-\frac{\bu(t_{n+1})+\bu(t_n)}{2}\Big), \bferr{v}{n+1}\right)\td s\notag\\
&\qquad\quad+k\big(\bF[\bu(t_n)]-\bF[\bu^n], \bferr{v}{n+1}\big)+\inttn\big(\bF[\bu(s)]-\bF[\bu(t_n)], \bferr{v}{n+1}\big)\td s\notag\\
&\qquad\quad+\left(\inttn\big(\bG[\bu(s)]-\bG[\bu^n]\big)\td W(s), \bferr{v}{n+1}\right)\notag\\
&\qquad:=I_1+I_2+I_3+I_4+I_5.\notag
\end{align}

By equation \eqref{eqn:holder-cont-v-2}, the expectation of the first term $I_1$ can be bounded by
\begin{align}\label{eqn:prf-time-semiform-order-h1-7}
\exc{I_1}\leq&2\inttn\mathbb{E}\Bigl[\lambda\bigl\|\mathrm{div}\bigl({\bf v}(s)-{\bf v}(t_{n+1})\bigr)\bigr\|_{{\bf L}^2}^{2}\\
&+\mu\bigl(\epsilon\bigl({\bf v}(s)-{\bf v}(t_{n+1})\bigr)\bigr\|_{{\bf L}^2}^{2}\Bigr]\td s\notag\\
&+\frac{k}{4}\exc{\normdel{\bferr{u}{n+1}}}\notag\\
&+\frac{k}{4}\exc{\normdel{\bferr{u}{n}}}\notag\\
\leq&C_{s6}k^2+\frac{k}{4}\exc{\normdel{\bferr{u}{n+1}}}\notag\\
&+\frac{k}{4}\exc{\normdel{\bferr{u}{n}}}.\notag
\end{align}

By equation \eqref{eqn:holder-cont-u-3}, the expectation of the second term $I_2$ can be bounded by
\begin{align}\label{eqn:prf-time-semiform-order-h1-8}
\exc{I_2}\leq&\frac{1}{8} \int_{t_{n}}^{t_{n+1}} \mathbb{E}\left[\left\|\mathcal{L}\left(\mathbf{u}(s)-\mathbf{u}\left(t_{n}\right)\right)\right\|_{\mathbf{L}^{2}}^{2}\right] \mathrm{d} s\\
&+\frac{1}{8} \int_{t_{n}}^{t_{n+1}} \mathbb{E}\left[\left\|\mathcal{L}\left(\mathbf{u}(s)-\mathbf{u}\left(t_{n+1}\right)\right)\right\|_{\mathbf{L}^{2}}^{2}\right] \mathrm{d} s+k \mathbb{E}\left[\left\|\mathbf{e}_{\mathbf{v}}^{n+1}\right\|_{\mathbf{L}^{2}}^{2}\right] \notag\\
\leq & \frac{1}{12} C_{s4} k^{3}+k \mathbb{E}\left[\left\|\mathbf{e}_{\mathbf{v}}^{n+1}\right\|_{\mathbf{L}^{2}}^{2}\right].\notag
\end{align}

By the Poincar\'{e} inequality, the Korn's inequality, and equations \eqref{eqn:holder-cont-u-1} and \eqref{eqn:holder-cont-u-2}, we have
\begin{align}\label{eqn:prf-time-semiform-order-h1-9}
\exc{I_3}\leq&\frac{k}{4}\exc{\normbf{\bF[\bu(t_n)]-\bF[\bu^n]}}+k\exc{\normbf{\bferr{v}{n+1}}}\\
\leq&\frac{k}{4}C_B^2\exc{\lambda\|\Div(\bferr{u}{n})\|^{2}_{\bL_{2}}+\mu\|\veps(\bferr{u}{n})\|^{2}_{\bL_{2}}+\|\bferr{u}{n}\|^{2}_{\bL_{2}}}+k\exc{\normbf{\bferr{v}{n+1}}}\notag\\
\leq&kCC_B^2\exc{\lambda\|\Div(\bferr{u}{n})\|^{2}_{\bL_{2}}+\mu\|\veps(\bferr{u}{n})\|^{2}_{\bL_{2}}}+k\exc{\normbf{\bferr{v}{n+1}}},\notag\\
\exc{I_4}\leq&\inttn\exc{\normbf{\bF[\bu(s)]-\bF[\bu(t_n)]}} \td s+k\exc{\normbf{\bferr{v}{n+1}}}\label{eqn:prf-time-semiform-order-h1-10}\\
\leq& \frac14C_B^2\int_{t_n}^{t_{n+1}}\Big(\exc{\lambda\normbf{\Div(\bu(s))-\Div(\bu(t_n))}}\notag\\
&+\exc{\mu\normbf{\veps(\bu(s))-\veps(\bu(t_n))}}+\exc{\normbf{\bu(s)-\bu(t_n)}}\Big)ds+k\exc{\normbf{\bferr{v}{n+1}}}\notag\\
\leq&CC_B^2k^3(C_{s4}+C_{s2})+k\exc{\normbf{\bferr{v}{n+1}}}.\notag
\end{align}

For the term $I_5$, note that $\exc{\left(\inttn\big(\bG[\bu(s)]-\bG[\bu^n]\big)\td W(s), \bferr{v}{n}\right)}=0$. Then by the Poincar\'{e} inequality, the Korn's inequality, It\^o's isometry, and equations \eqref{eqn:holder-cont-u-1} and \eqref{eqn:holder-cont-u-2}, we have		
\begin{align}\label{eqn:prf-time-semiform-order-h1-11}
\exc{I_5}=&\exc{\big((\bG[\bu(t_n)]-\bG[\bu^n])\commentone{\Delta W_{n}}, \bferr{v}{n+1}-\bferr{v}{n}\big)}\\
& +\exc{\left(\inttn\big(\bG[\bu(s)]-\bG[\bu(t_n)]\big)\td W(s), \bferr{v}{n+1}-\bferr{v}{n}\right)}\notag\\
\leq&\frac{1}{4}\exc{\normbf{\bferr{v}{n+1}-\bferr{v}{n}}}+2kC_B^2\exc{\lambda\normbf{\Div(\bferr{u}{n})}+\mu\normbf{\veps(\bferr{u}{n})}+\normbf{\bferr{u}{n}}}\notag\\
& +2C_B^2\inttn\mathbb{E}\Bigl[\lambda\normbf{\Div(\bu(s)-\bu(t_n))}+\mu\normbf{\veps(\bu(s)-\bu(t_n))}\Bigr]\td s\notag\\
& +2C_B^2\inttn\exc{\normbf{\bu(s)-\bu(t_n)}}\td s\notag\\
\leq&\frac{1}{4}\exc{\normbf{\bferr{v}{n+1}-\bferr{v}{n}}}+CC_B^2k\exc{\engerr{n}}+CC_B^2k^3(C_{s1}+C_{s2}).\notag
\end{align}
		
Combining \eqref{eqn:prf-time-semiform-order-h1-6}-\eqref{eqn:prf-time-semiform-order-h1-11} together, we have
\begin{align}
&\exc{\engerr{n+1}}-\exc{\engerr{n}}+\frac{1}{4}\exc{\normbf{\bferr{v}{n+1}-\bferr{v}{n}}}\leq CC_B^2k\exc{\engerr{n}}+C_{s6}k^2.\label{eqn:prf-time-semiform-order-h1-12}
\end{align}
		
Then the discrete Gronwall's inequality yields
\begin{equation}\label{eqn:prf-time-semiform-order-h1-13}
\exc{\engerr{n+1}}\leq C_{s6}e^{CC_B^2}k.
\end{equation}
where we use the fact that $\exc{\energy{0}}=0$. Then the theorem is proved.
\end{proof}

Theorem \ref{theo:time-semiform-order-h1} states that the time semi-discrete convergence order in $\bH^1$-norm is $\half$. The next theorem establishes the time semi-discrete $\bL^2$-error of $\bu^n$, and it states that the time semi-discrete convergence order in $\bL^2$-norm is $1$.
\commentone{Its proof is inspired by a similar proof for the stochastic scalar wave equation given  in \cite{feng2022higher}.}
	
\commentone{
\begin{theorem}\label{theo:time-semiform-order-l2}
Let $(\bu, \bv)$ be the solution of \eqref{eqn:weakform-1}--\eqref{eqn:weakform-2} and $\{\bu^n, \bv^n\}_n$ be the solution of \eqref{eqn:time-semiform}--\eqref{20220529_1}. Under the assumptions \eqref{20220528_1}--\eqref{20220528_4} and the assumptions that
\begin{align*}
\|\mbf{G}[\bu(0)]\|_{L^2}^2\le Ck\qquad\text{and}\qquad \|k\bv_0-(\bu^1-\bu^0)\|_{L^2}^2\le Ck^4,
\end{align*}
there holds		
\begin{equation}\label{eqn:time-semiform-order-l2}
\max_{1\leq n\leq N}\exc{\normbf{\bu(t_{n})-\bu^n}}\leq CC_{s4}k^2.
\end{equation}
\end{theorem}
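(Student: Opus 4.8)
The plan is to follow the usual duality/energy trick for obtaining an $O(k^2)$ bound on $\bu(t_n)-\bu^n$ in $\bL^2$ when only an $O(k^{1/2})$ bound is available in the $\bH^1$-energy norm. First I would introduce the accumulated displacement error $\bferr{u}{n}=\bu(t_{n+1})-\bu^{n+1}$ and, crucially, test the velocity error equation with the \emph{summed} quantity $\sum_{\ell=0}^{n}\bferr{u}{\ell}$ (or equivalently an auxiliary variable $\bz^n:=k\sum_{\ell=0}^{n}\bferr{u}{\ell}$ tracking the $\bH^{-1}$-type primitive of the velocity error), rather than with $\bferr{v}{n+1}$ as in Theorem~\ref{theo:time-semiform-order-h1}. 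The displacement equation $\big(\bferr{u}{n+1}-\bferr{u}{n},\bphi\big)=\big(\int_{t_n}^{t_{n+1}}(\bv(s)-\bv(t_{n+1}))\td s,\bphi\big)+k\big(\bferr{v}{n+1},\bphi\big)$ lets one trade a time-derivative of $\bferr{u}{}$ for $k\bferr{v}{n+1}$ plus an $O(k^2)$ consistency term controlled by the $\bv$-H\"older estimate \eqref{eqn:holder-cont-v-1}. The idea is to build a telescoping identity in which the cross term $k(\opL\bferr{u}{n+\half},\cdot)$ that obstructed the $\bH^1$ proof now appears in a form that can be summed by parts in $n$ and absorbed, producing a discrete energy of the form $\exc{\normbf{\bferr{u}{n+1}}}+$ (something nonnegative) on the left.

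Concretely, the key steps in order: (i) rewrite the scheme and the weak formulation over a single step, subtract, and derive the coupled error system \eqref{eqn:prf-time-semiform-order-h1-2}--\eqref{eqn:prf-time-semiform-order-h1-3}; (ii) define the primitive error variable, say $\bzeta^{n}$ with $\bzeta^{n}-\bzeta^{n-1}$ proportional to $\bferr{u}{n}$, and test \eqref{eqn:prf-time-semiform-order-h1-3} with a linear combination designed so that the $k\innerlap{\bferr{u}{n+\half}}{\cdot}$ term telescopes against the elastic-energy of $\bzeta$; (iii) use the Lipschitz bounds \eqref{20220528_3}--\eqref{20220528_4} on $\bF,\bG$, It\^o isometry for the stochastic increment (noting $\exc{(\bG[\bu(t_n)]-\bG[\bu^n])\Delta W_n,\cdot^n}=0$ to kill the leading martingale term), and the H\"older estimates \eqref{eqn:holder-cont-u-1}--\eqref{eqn:holder-cont-u-3}, \eqref{eqn:holder-cont-v-1}--\eqref{eqn:holder-cont-v-3} to bound every consistency term by $Ck^2\exc{\,\cdot\,}$-type quantities or by $Ck\cdot O(k^4)$; here the stronger stability Lemma~\ref{lemma:time-semiform-stab-uh2-vh1} and the $\bH^2$-H\"older bound \eqref{eqn:holder-cont-u-3} with constant $C_{s4}$ are what upgrade the rate; (iv) invoke the two extra hypotheses — $\|\bG[\bu(0)]\|_{L^2}^2\le Ck$ to control the very first step's noise contribution, and $\|k\bv_0-(\bu^1-\bu^0)\|_{L^2}^2\le Ck^4$ to start the induction with an $O(k^4)$ seed for $\exc{\normbf{\bferr{u}{0}}}$; (v) assemble a discrete Gronwall inequality of the form $a_{n+1}\le a_n(1+Ck)+Ck\cdot C_{s4}k^4 + \text{(absorbable)}$ and conclude $\max_n\exc{\normbf{\bu(t_n)-\bu^n}}\le CC_{s4}k^2$, taking care that the $\bH^1$-level error already controlled in Theorem~\ref{theo:time-semiform-order-h1} provides the $O(k)$ bounds on $\exc{\normdel{\bferr{u}{n}}}$ and $\exc{\normbf{\bferr{v}{n+1}}}$ needed to absorb the lower-order cross terms.

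The main obstacle I anticipate is step (ii)--(iii): the midpoint term $k\innerlap{\bferr{u}{n+\half}}{\bferr{v}{n+1}}$ does not by itself telescope, because $\bferr{v}{n+1}$ is not the forward difference of $\bferr{u}{}$ up to a clean factor — the displacement relation involves the \emph{implicit} velocity $\bv^{n+1}$ and a quadrature remainder $\int_{t_n}^{t_{n+1}}(\bv(s)-\bv(t_{n+1}))\td s$. Handling this requires choosing the test function and the summation-by-parts bookkeeping so that the elastic bilinear form $\innerprd{\cdot}{\cdot}$ evaluated on the primitive variable produces a telescoping sum $\innerprd{\bzeta^{n+1}}{\bzeta^{n+1}}-\innerprd{\bzeta^{n}}{\bzeta^{n}}$ plus a controllable defect, and then absorbing the defect using the $\bH^1$-norm convergence already in hand. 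A secondary technical point is that the noise term must be split as in \eqref{eqn:prf-time-semiform-order-h1-11} into a mean-zero part (removed by taking expectation) and a part bounded via It\^o isometry by $k$ times the $\bL^2$-displacement error plus $O(k^3)$ H\"older remainders, and one must verify these remainders are genuinely $O(k^4)$ after multiplication by the extra $k$ from the stochastic increment's variance, which is exactly why the hypothesis $\|\bG[\bu(0)]\|_{L^2}^2\le Ck$ is needed to handle $n=0$.
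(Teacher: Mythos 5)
Your plan is, in substance, the paper's own proof: both hinge on bringing the time--accumulated displacement error into play so that the elastic bilinear form telescopes at the level of the primitive while $\normbf{\bferr{u}{n}}$ appears directly in the discrete energy, and both then close with the H\"older estimates \eqref{eqn:holder-cont-u-1}--\eqref{eqn:holder-cont-u-3}, the Lipschitz/It\^o-isometry splitting of the noise, the two extra hypotheses for the first-step terms, and discrete Gronwall. The one organizational difference is exactly at the step you flag as the main obstacle: rather than testing the single-step velocity error equation with the summed quantity $\sum_{\ell}\bferr{u}{\ell}$ (which, as you note, leaves a defect to absorb), the paper applies the summation to the \emph{equation} — it rewrites the scheme as a second difference, sums over $l$ and multiplies by $k$ (equivalently, integrates the weak formulation twice, $\int_{t_n}^{t_{n+1}}\int_0^s$), so that the resulting error equation \eqref{eqn:prf-time-semiform-order-l2-4} already contains $k^2\innerlap{\bar{\bf e}_{\bf u}^{n+\half}}{\cdot}$ with $\bar{\bf e}_{\bf u}^{n+\half}=\sum_{l=1}^n\bferr{u}{l+\half}$. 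Testing with $\bferr{u}{n+\half}=\bar{\bf e}_{\bf u}^{n+\half}-\bar{\bf e}_{\bf u}^{n-\half}$ then telescopes \emph{both} the $\bL^2$ term and the elastic term exactly, with a nonnegative extra term and no defect; the price is a longer list of consistency terms ($I_1,\dots,I_{15}$), whose $O(k^3)$-per-step size relies on symmetrizing the quadrature error about the midpoint $\tfrac12(\bu(t_{l+1})+\bu(t_l))$ over doubled intervals $\int_{t_{l-1}}^{t_{l+1}}$ and on the cancellation between the $\int_{t_n}^{s}$ and $\int_{s}^{t_{n+1}}$ pieces. Two small corrections to your bookkeeping: the Gronwall increment must be $O(k^3)$ per step (summing to the claimed $O(k^2)$ for the squared norm), not $Ck\cdot C_{s4}k^4$; and the hypothesis $\|k\bv_0-(\bu^1-\bu^0)\|_{L^2}^2\le Ck^4$ is not an initial seed for the induction but controls a term that recurs at every step of \eqref{eqn:prf-time-semiform-order-l2-4}, contributing $Ck^3$ per step after Young's inequality.
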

}

\begin{proof}\label{prf:time-semiform-order-l2}
Note that $d_t\bu^l = \frac{(\bu^l-\bu^{l-1})}{k}$. Then by \eqref{20220529_1}, we have
		
\begin{align}\label{eqn:prf-time-semiform-order-l2-1}
\big(d_t\bu^{l+1}-d_t\bu^l,& \bpsi\big)+k\lambda\big(\Div\bu^{l+\half}, \Div\bpsi\big) + k\mu\big(\varepsilon(\bu^{l+\half}), \varepsilon(\bpsi)\big)\\
&=\big(\mbf{G}[\bu^l]\Delta W_{l}, \bpsi\big) + k\big(\mbf{F}[\bu^l], \bpsi\big).\notag
\end{align}
		
Denote $\bar{\bu}^{n+\frac12}=\sum\limits_{l=1}^{n}\bu^{l+\frac12}$. Taking the summation over $l$ from $l=1$ to $l=n$ and multiplying $k$ on both sides of \eqref{eqn:prf-time-semiform-order-l2-1} yield
\begin{align}\label{eqn:prf-time-semiform-order-l2-2}
\big(\bu^{n+1}-\bu^n,& \bpsi\big)-k^2\innerlap{\bar{\bu}^{n+\half}}{\bpsi}-k(d_t\bu^1, \bpsi)\\
&=k\left(\sum_{l=1}^{n}\mbf{G}[\bu^l]\Delta W_{l}, \bpsi\right) + k^2\left(\sum_{l=1}^{n}\mbf{F}[\bu^l], \bpsi\right).\notag
\end{align}

\commentone{
Besides, using \eqref{eqn:weakform-1} and \eqref{eqn:weakform-2}, we get
\begin{align}\label{eqn:prf-time-semiform-order-l2-3}
&\big(\bu(t_{n+1})-\bu(t_n), \bpsi\big) - \int_{t_n}^{t_{n+1}}\int_{0}^{s}\innerlap{\bu(\xi)}{\bpsi}\td\xi\td s-k\big(\bv_0, \bpsi\big)\\
&\qquad =\left(\int_{t_n}^{t_{n+1}}\int_{0}^{s}\mbf{G}[\bu(\xi)]\td W(\xi)\td s, \bpsi\right) + \int_{t_n}^{t_{n+1}}\int_{0}^{s}\big(\mbf{F}[\bu(\xi)], \bpsi\big)\td\xi\td s.\notag
\end{align}
}

\commentone{
Subtracting \eqref{eqn:prf-time-semiform-order-l2-2} from \eqref{eqn:prf-time-semiform-order-l2-3} lead to
\begin{align}\label{eqn:prf-time-semiform-order-l2-4}
&\big(\bferr{u}{n+1}-\bferr{u}{n}, \bpsi\big)-k^2\innerlap{\bar{\bf e}_{\bf u}^{n+\half}}{\bpsi}\\
&\qquad=\bigg(\int_{t_n}^{t_{n+1}}\int_{0}^{s}\opL\bu(\xi)\td\xi\td s-k^2\sum\limits_{l=1}^{n}\opL\bigl(\frac{\bu(t_{l+1})+\bu(t_l)}{2}\bigr),\bpsi\bigg)\notag\\ 
&\qquad\quad+ \left(\int_{t_n}^{t_{n+1}}\int_{0}^{s}\mbf{G}[\bu(\xi)]\td W(\xi)\td s - k\sum_{l=1}^{n}\mbf{G}[\bu^l]\Delta W_{l}, \bpsi\right)\notag\\
&\qquad\quad+\left(\int_{t_n}^{t_{n+1}}\int_{0}^{s}\mbf{F}[\bu(\xi)]\td\xi\td s - k^2\sum_{l=1}^{n}\mbf{F}[\bu^l], \bpsi\right)\notag\\
&\qquad\quad-\bigl(k\bv_0-(\bu^1-\bu^0), \bpsi\bigr).\notag
\end{align}
}

\commentone{
Borrowing an idea of \cite{feng2022higher}, we write  $\int_{0}^{s}\td\xi=\sum\limits_{l=0}^{n-1}\int_{t_{l}}^{t_{l+1}}\td\xi+\int_{t_n}^{s}\td\xi$ and choosing $\bpsi = \bferr{u}{n+\half}$ in \eqref{eqn:prf-time-semiform-order-l2-4} yield
\begin{align}\label{eqn:prf-time-semiform-order-l2-5}
&\half\Big(\normbf{\bferr{u}{n+1}}-\normbf{\bferr{u}{n}}\Big)\\
& \qquad+\frac{k^2\lambda}{2}\Big(\normbf{\Div(\bar{\bf e}_{\bf u}^{n+\half})}-\normbf{\Div(\bar{\bf e}_{\bf u}^{n-\half})}+\normbf{\Div(\bferr{u}{n+\half})}\Big)\notag\\
& \qquad+ \frac{k^2\mu}{2}\Big(\normbf{\veps(\bar{\bf e}_{\bf u}^{n+\half})}-\normbf{\veps(\bar{\bf e}_{\bf u}^{n-\half})}+\normbf{\veps(\bferr{u}{n+\half})}\Big)\notag\\
&:=I_1 + I_2 + \cdots +I_{15}, \notag
\end{align}
}

\commentone{
where
\begin{align}
&I_1+\cdots+I_4=\frac12\int_{t_n}^{t_{n+1}}\sum_{l = 1}^{n}\int_{t_{l-1}}^{t_{l+1}}\left(\opL\bu(\xi)-\opL\bigl(\frac{\bu(t_{l+1})+\bu(t_l)}{2}\bigr), \bferr{u}{n+\half}\right)\td\xi\td s\\ 
&\quad+\frac12\int_{t_n}^{t_{n+1}}\int_{t_{0}}^{t_{1}}\innerlap{\bu(\xi)}{\bferr{u}{n+\half}}\td\xi\td s+\frac12\int_{t_n}^{t_{n+1}}\int_{t_n}^{s}\innerlap{\bu(\xi)}{\bferr{u}{n+\half}}\td\xi\td s\notag\\
&\quad-\frac12\int_{t_n}^{t_{n+1}}\int_{s}^{t_{n+1}}\innerlap{\bu(\xi)}{\bferr{u}{n+\half}}\td\xi\td s\notag,
\end{align}
}

\commentone{
\begin{align}
&I_5+\cdots+I_9:=\frac12\int_{t_n}^{t_{n+1}}\sum_{l = 1}^{n}\int_{t_{l-1}}^{t_{l+1}}\big(\mbf{F}[\bu(\xi)]-\mbf{F}[\bu(t_l)], \bferr{u}{n+\half}\big)\td\xi\td s\\
&\quad+k^2\left(\sum_{l= 1}^{n}\big(\mbf{F}[\bu(t_l)]-\mbf{F}[\bu^l]\big), \bferr{u}{n+\half}\right)+\frac12\int_{t_n}^{t_{n+1}}\int_{t_{0}}^{t_{1}}\big(\mbf{F}[\bu(\xi)], \bferr{u}{n+\half}\big)\td\xi\td s\notag\\
&\quad+\frac12\int_{t_n}^{t_{n+1}}\int_{t_{n}}^{s}\big(\mbf{F}[\bu(\xi)], \bferr{u}{n+\half}\big)\td\xi\td s-\frac12\int_{t_n}^{t_{n+1}}\int_{s}^{t_{n+1}}\big(\mbf{F}[\bu(\xi)], \bferr{u}{n+\half}\big)\td\xi\td s\notag,
\end{align}
}

\commentone{
and
\begin{align}
&I_{10}+\cdots+I_{14}:=\left(k\sum_{l = 1}^{n}\big(\mbf{G}[\bu(t_l)]-\mbf{G}[\bu^l]\big) \Delta W_l, \bferr{u}{n+\half}\right)\\
&\quad+\left(\int_{t_n}^{t_{n+1}} \sum_{\ell=1}^{n} \int_{t_{\ell}}^{t_{\ell+1}}\mbf{G}[\bu(\xi)]-\mbf{G}[\bu(t_l)] \mathrm{d} W(\xi) \mathrm{d} s, \bferr{u}{n+\half}\right)\notag \\
&\quad-\left(\int_{t_n}^{t_{n+1}} \int_{s}^{t_{n+1}}\mbf{G}[\bu(\xi)]-\mbf{G}[\bu(t_n)] \mathrm{d} W(\xi) \mathrm{d} s, \bferr{u}{n+\half}\right)\notag\\
&\quad-\left(\int_{t_n}^{t_{n+1}} \int_{s}^{t_{n+1}}\mbf{G}[\bu(t_n)] \mathrm{d} W(\xi) \mathrm{d} s, \bferr{u}{n+\half}\right)+\left(\int_{t_n}^{t_{n+1}} \int_{t_{0}}^{t_{1}}\mbf{G}[\bu(\xi)] \mathrm{d} W(\xi) \mathrm{d} s, \bferr{u}{n+\half}\right)\notag.
\end{align}
}

\commentone{
By equation \eqref{eqn:holder-cont-u-3}, the first term $I_1$ can be bounded by
\begin{align}\label{eqn:prf-time-semiform-order-l2-6}
\exc{I_1}\leq&\int_{t_n}^{t_{n+1}}\sum_{l = 1}^{n}\int_{t_{l-1}}^{t_{l+1}}\exc{\bigg(\opL\bigl(\frac{\bu(\xi)-\bu(t_{l+1})+\bu(\xi)-\bu(t_l)}{2}\bigr), \bferr{u}{n+\half}\bigg)}\td\xi\td s\\
 \leq&\int_{t_n}^{t_{n+1}}\sum_{l = 1}^{n}\int_{t_{l-1}}^{t_{l+1}}\left(\exc{\normbf{\opL\bigl(\frac{\bu(\xi)-\bu(t_{l+1})+\bu(\xi)-\bu(t_l)}{2}\bigr)}}\right)^\half\notag\\
 &\left(\exc{\normbf{\bferr{u}{n+\half}}}\right)^\half\td\xi\td s\notag\\
 \leq&\int_{t_n}^{t_{n+1}}\sum_{l = 1}^{n}\int_{t_{l-1}}^{t_{l+1}}\left(CC_{s4}k^2\exc{\normbf{\bferr{u}{n+\half}}}\right)^\half\td\xi\td s\notag\\
 \leq&k\exc{\normbf{\bferr{u}{n+1}}}+k\exc{\normbf{\bferr{u}{n}}}+CC_{s4}k^3.\notag
\end{align}
}

\commentone{
By equation \eqref{eqn:holder-cont-u-3}, the second term $I_2$ can be bounded by
\begin{align}
\exc{I_2}\le&\frac12\int_{t_n}^{t_{n+1}}\exc{\int_{t_{0}}^{t_{1}}\innerlap{\bigl(\bu(\xi)-\bu(t_0)\bigr)}{\bferr{u}{n+\frac12}}\td\xi}\td s\\
&\quad+\frac12\int_{t_n}^{t_{n+1}}\exc{\int_{t_{0}}^{t_{1}}\innerlap{\bigl(\bu(t_0)\bigr)}{\bferr{u}{n+\frac12}}\td\xi}\td s\notag\\
\le&k\exc{\normbf{\bferr{u}{n+1}}}+k\exc{\normbf{\bferr{u}{n}}}+CC_{s4}k^5+Ck^3\notag.
\end{align}
}

\commentone{
By equation \eqref{eqn:holder-cont-u-3}, the summation of the third term and the fourth term can be bounded by
\begin{align}
\exc{I_3+I_4}\le&\frac12\int_{t_n}^{t_{n+1}}\int_{t_n}^{s}\innerlap{\bigl(\bu(\xi)-\bu(t_n)\bigr)}{\bferr{u}{n+\half}}\td\xi\td s\\
&\quad-\frac12\int_{t_n}^{t_{n+1}}\int_{s}^{t_{n+1}}\innerlap{\bigl(\bu(\xi)-\bu(t_n)\bigr)}{\bferr{u}{n+\half}}\td\xi\td s\notag\\
&\quad+\frac12\innerlap{\bigl(\bu(t_n)\bigr)}{\bferr{u}{n+\half}}\bigl(\int_{t_n}^{t_{n+1}}\int_{t_n}^{s}\td\xi\td s-\int_{t_n}^{t_{n+1}}\int_{s}^{t_{n+1}}\td\xi\td s\bigr)\notag\\
\le&k\exc{\normbf{\bferr{u}{n+1}}}+k\exc{\normbf{\bferr{u}{n}}}+CC_{s4}k^5\notag.
\end{align}
}

\commentone{
By equations \eqref{20220528_2} and \eqref{eqn:holder-cont-u-1}, we have
\begin{align}\label{eqn:prf-time-semiform-order-l2-8}
\exc{I_5}&\leq C\int_{t_n}^{t_{n+1}}\sum_{l = 1}^{n}\int_{t_{l-1}}^{t_{l+1}}\exc{\normbf{\mbf{F}[\bu(\xi)]-\mbf{F}[\bu(t_l)]}}\td\xi\td s\\
&\qquad + k\exc{\normbf{\bferr{u}{n+1}}}+k\exc{\normbf{\bferr{u}{n}}}\notag\\
&\leq C\int_{t_n}^{t_{n+1}}\sum_{l = 1}^{n}\int_{t_{l-1}}^{t_{l+1}}\Bigl[\normbf{\bu(\xi)-\bu(t_l)}\td\xi\td s\notag \\
&\qquad+ k\exc{\normbf{\bferr{u}{n+1}}}+k\exc{\normbf{\bferr{u}{n}}}\notag\\
&\leq CC_{s1}k^3 + k\exc{\normbf{\bferr{u}{n+1}}}+k\exc{\normbf{\bferr{u}{n}}}.\notag
\end{align}
}

\commentone{
By equation \eqref{20220528_2}, the sixth term can be bounded by
\begin{align}\label{eqn:prf-time-semiform-order-l2-10}
\exc{I_6}&=k^2\exc{\left(\sum_{l = 1}^{n}\big(\mbf{F}[\bu(t_l)]-\mbf{F}[\bu^l]\big), \bferr{u}{n+\half}\right)}\\
&\leq k^2\sum_{l = 1}^{n}\left(\frac{T}{4}\exc{\normbf{\mbf{F}[\bu(t_l)]-\mbf{F}[\bu^l]}}+\frac1T\exc{\normbf{\bferr{u}{n+\half}}}\right)\notag\\
&\leq Ck^2\sum_{l = 1}^{n}\mathbb{E}\Bigl[\normbf{\bu(t_l)-\bu^l}\Bigr]+ k\exc{\normbf{\bferr{u}{n+\half}}}\notag\\
&\leq Ck^2\sum_{l = 1}^{n}\exc{\normbf{\bferr{u}{l}}}+k\exc{\normbf{\bferr{u}{n+1}}}+k\exc{\normbf{\bferr{u}{n}}}.\notag
\end{align}
}

\commentone{
By equations \eqref{20220528_2} and \eqref{eqn:holder-cont-u-1}, the seventh term $I_7$ can be bounded by
\begin{align}
\exc{I_7}\le&\frac12\int_{t_n}^{t_{n+1}}\exc{\int_{t_{0}}^{t_{1}}\bigl(\mbf{F}[\bu(\xi)]-\mbf{F}[\bu(t_0)],\bferr{u}{n+\frac12}\bigr)\td\xi}\td s\\
&\quad+\frac12\int_{t_n}^{t_{n+1}}\exc{\int_{t_{0}}^{t_{1}}\bigl(\mbf{F}[\bu(t_0)],\bferr{u}{n+\frac12}\bigr)\td\xi}\td s\notag\\
\le&k\exc{\normbf{\bferr{u}{n+1}}}+k\exc{\normbf{\bferr{u}{n}}}+CC_{s1}k^5+Ck^3\notag.
\end{align}
}

\commentone{
By equation \eqref{20220528_2}, the summation of the third term and the fourth term can be bounded by
\begin{align}
\exc{I_8+I_9}\le&\frac12\int_{t_n}^{t_{n+1}}\int_{t_n}^{s}\bigl(\mbf{F}[\bu(\xi)]-\mbf{F}[\bu(t_n)],\bferr{u}{n+\half}\bigr)\td\xi\td s\\
&\quad-\frac12\int_{t_n}^{t_{n+1}}\int_{s}^{t_{n+1}}\bigl(\mbf{F}[\bu(\xi)]-\mbf{F}[\bu(t_n)],\bferr{u}{n+\half}\bigr)\td\xi\td s\notag\\
&\quad+\frac12\bigl(\mbf{F}[\bu(t_n)],\bferr{u}{n+\half}\bigr)\bigl(\int_{t_n}^{t_{n+1}}\int_{t_n}^{s}\td\xi\td s-\int_{t_n}^{t_{n+1}}\int_{s}^{t_{n+1}}\td\xi\td s\bigr)\notag\\
\le&k\exc{\normbf{\bferr{u}{n+1}}}+k\exc{\normbf{\bferr{u}{n}}}+CC_{s4}k^5\notag.
\end{align}
}

\commentone{
By Young's inequality and equation \eqref{20220528_2}, the tenth term $I_{10}$ can be bounded by
\linespread{1.55}
\begin{align}\label{eqn:prf-time-semiform-order-l2-9}
\exc{I_{10}}&=\exc{k\big(\sum_{l = 1}^{n}\big(\mbf{G}[\bu(t_l)]-\mbf{G}[\bu^l]\big)\Delta W_{l}, \bferr{u}{n+\half}\big)}\\
&\leq k\exc{\normbf{\bferr{u}{ n+\half}}}+Ck\exc{\normbf{\sum_{l = 1}^{n}\big(\mbf{G}[\bu(t_l)]-\mbf{G}[\bu^l]\big)\Delta W_{l}}}\notag\\
&\leq k\exc{\normbf{\bferr{u}{n+1}}}+k\exc{\normbf{\bferr{u}{n}}}+Ck^2\sum_{l = 1}^{n}\mathbb{E}\Bigl[\normbf{\bferr{u}{l}}\Bigr]\notag.
\end{align}
}

\commentone{
For the eleventh term $I_{11}$, by It\^o's isometry, Young's inequality, and equations \eqref{20220528_2} and \eqref{eqn:holder-cont-u-1}, we get
\begin{align}
\exc{I_{11}}&\leq C\int_{t_n}^{t_{n+1}}\sum_{l = 1}^{n}\int_{t_l}^{t_{l+1}}\exc{\normbf{\mbf{G}[\bu(\xi)]-\mbf{G}[\bu(t_l)]}}\td\xi\td s \\
&\qquad+ k\exc{\normbf{\bferr{u}{n+1}}}+k\exc{\normbf{\bferr{u}{n}}}\notag\\
&\leq C\int_{t_n}^{t_{n+1}}\sum_{l = 1}^{n}\int_{t_l}^{t_{l+1}}\mathbb{E}\Bigl[\normbf{\bu(\xi)-\bu(t_l)}\Bigr]\td \xi\td s \notag\\
&\qquad+ k\exc{\normbf{\bferr{u}{n+1}}}+k\exc{\normbf{\bferr{u}{n}}}\notag\\
&\leq CC_{s1}k^3 + k\exc{\normbf{\bferr{u}{n+1}}}+k\exc{\normbf{\bferr{u}{n}}}.\notag
\end{align}
}

\commentone{
By It\^o's isometry, Young's inequality, and equations \eqref{20220528_2} and \eqref{eqn:holder-cont-u-1}, the term $I_{12}$ can be bounded by
\begin{align}
\exc{I_{12}}&\leq C\int_{t_n}^{t_{n+1}}\int_{s}^{t_{n+1}}\exc{\normbf{\mbf{G}[\bu(\xi)]-\mbf{G}[\bu(t_n)]}}\td\xi\td s \\
&\qquad+ k\exc{\normbf{\bferr{u}{n+1}}}+k\exc{\normbf{\bferr{u}{n}}}\notag\\
&\leq C\int_{t_n}^{t_{n+1}}\int_{s}^{t_{n+1}}\mathbb{E}\Bigl[\normbf{\bu(\xi)-\bu(t_n)}\Bigr]\td \xi\td s \notag\\
&\qquad+ k\exc{\normbf{\bferr{u}{n+1}}}+k\exc{\normbf{\bferr{u}{n}}}\notag\\
&\leq CC_{s1}k^4 + k\exc{\normbf{\bferr{u}{n+1}}}+k\exc{\normbf{\bferr{u}{n}}}.\notag
\end{align}
}

\commentone{
By equations \eqref{eqn:prf-time-semiform-order-h1-2} and \eqref{eqn:holder-cont-v-1}, and Theorem \ref{theo:time-semiform-order-h1}, the term $I_{13}$ can be bounded by
\begin{align}
\exc{I_{13}}&= \mathbb{E}\bigg[\mbf{G}[\bu(t_n)]\bigl(\int_{t_n}^{t_{n+1}}W(t_{n+1})-W(s)\td s,\bferr{u}{n+\half}\bigr)\bigg]\\
&= \frac{1}{2}\mathbb{E}\bigl[\bigl(\mbf{G}[\bu(t_n)](\int_{t_n}^{t_{n+1}}W(t_{n+1})-W(s)\td s,\bferr{u}{n+1}-\bferr{u}{n}\bigr)\bigr]\notag\\
&= \frac{1}{2}\mathbb{E}\bigl[\bigl(\mbf{G}[\bu(t_n)]\int_{t_n}^{t_{n+1}}W(t_{n+1})-W(s)\td s,k\bferr{v}{n+1}\bigr)\bigr]\notag\\
&\quad+\frac{1}{2}\mathbb{E}\bigl[\bigl(\mbf{G}[\bu(t_n)]\int_{t_n}^{t_{n+1}}W(t_{n+1})-W(s)\td s,\int_{t_n}^{t_{n+1}}v(s)-v(t_{n+1})\td s\bigr)\bigr]\notag\\
&\le Ck^3+k^2\|\bferr{v}{n+1}\|_{L^2}^2+k\int_{t_n}^{t_{n+1}}\|v(s)-v(t_{n+1})\|_{L^2}^2\td s\notag\\
&\le Ck^3+C_{s6}e^{CC_B^2}k^3+C_{s5}k^3\notag.
\end{align}
}

\commentone{
By It\^o's isometry, Young's inequality, and equations \eqref{20220528_2} and \eqref{eqn:holder-cont-u-1}, the term $I_{14}$ can be bounded by
\begin{align}\label{eq20220925_1}
\exc{I_{14}}&=\left(\int_{t_n}^{t_{n+1}} \int_{t_{0}}^{t_{1}}\mbf{G}[\bu(\xi)]-\mbf{G}[\bu(0)] \mathrm{d} W(\xi) \mathrm{d} s, \bferr{u}{n+\half}\right)\\
&\quad+\left(\int_{t_n}^{t_{n+1}} \int_{t_{0}}^{t_{1}}\mbf{G}[\bu(0)] \mathrm{d} W(\xi) \mathrm{d} s, \bferr{u}{n+\half}\right)\notag\\
&\le CC_{s1}k^4+k\|\bferr{u}{n+\half}\|_{L^2}^2+Ck^2\|\mbf{G}[\bu(0)]\|_{L^2}^2\notag\\
&\le k\|\bferr{u}{n+\half}\|_{L^2}^2+Ck^3.\notag
\end{align}
}

\commentone{
Combining \eqref{eqn:prf-time-semiform-order-l2-5}-\eqref{eq20220925_1} and taking the summation, we have
\begin{align}\label{eqn:prf-time-semiform-order-l2-11}
\exc{\normbf{\bferr{u}{n+1}}}\le& Ck\sum\limits_{l=0}^n\exc{\normbf{\bferr{u}{l+1}}}+CC_{s4}k^2+CC_{s1}k^2+Ck^2\\
&\quad+C_{s6}e^{CC_B^2}k^2+C_{s5}k^2.\notag
\end{align}
}

Then the discrete Gronwall's inequality yields
\begin{align}\label{eqn:prf-time-semiform-order-l2-12}
\exc{\normbf{\bferr{u}{n+1}}}\leq CC_{s4}k^2.
\end{align}
where we use $\bferr{u}{0}=\bf{0}$.
\end{proof}

\section{Finite element discretization in space}\label{sec:fem-discrete}
In this section we discretize the time semi-discrete scheme in space using the finite element methods and give detailed error analysis for the fully discrete scheme.
\subsection{Finite element fully discrete scheme}\label{ssec:fem-form}
Let $\cal{T}_h$ be a quasi-uniform triangulation of $\cal{D}$ with diameter $h$. We consider the finite element spaces
\begin{align*}
\bUh^{r_1} = \left\{\bu_h\in\bH^1: \bu_h|_{K}\in\mbf{P}_{r_1}({K})\quad \forall K\in\cal{T}_h\right\},\\
\bVh^{r_2} = \left\{\bv_h\in\bH^1: \bv_h|_{K}\in\mbf{P}_{r_2}({K})\quad \forall K\in\cal{T}_h\right\},
\end{align*}
where $\mbf{P}_r({K})$ donates the space of polynomials with degree not exceeding a given integer $r$ on $K\in\cal{T}_h$. Next, we define two types of projection as follows.
\begin{definition}\label{def:projection}
{\rm (1)} The $\bL^2$-projection $\Ph:\bL^2\to\bUh^{r_1}$ is defined by
\begin{align*}
(\Ph\bw, \bv_h) = (\bw, \bv_h),
\end{align*}
for all $\bw\in\bL^2$ and $\bv_h\in\bUh^{r_1}$.
			
{\rm (2)} The $\bH^1$-projection $\Rh:\bH^1_0\to\bVh^{r_2}$ is defined by 
\begin{align*}
\innerprd{\Rh\bw}{\bv_h} = \innerprd{\bw}{\bv_h},
\end{align*}
for all $\bw\in\bH^1_0$ and $\bv_h\in\bVh^{r_2}$.
\end{definition}

The following error estimate results are well-known \cite{baker1976error,cohen2017finite}.
\begin{align}\label{eqn:prj-error-estimate-ph}
\left\lVert{\bw-\Ph\bw}\right\rVert_{\bL^2}\leq& C_{\Ph}h^{\min{(r_1+1, s)}}\left\lVert\bw\right\rVert_{\bH^s},\\
\left\lVert{\bw-\Rh\bw}\right\rVert_{\bL^2}\leq& C_{\Rh}h^{\min{(r_2+1, s)}}\left\lVert\bw\right\rVert_{\bH^s},\label{20220608_2}\\
\Bigl(\lambda\left\lVert{\Div(\bw-\Rh\bw)}\right\rVert^2_{\bL^2}+\mu\left\lVert{\veps(\bw-\Rh\bw)}\right\rVert^2_{\bL^2}\Bigr)^{\half}\leq& C_{\Rh}h^{\min{(r_2, s-1)}}\left\lVert\bw\right\rVert_{\bH^s}.\label{20220608_3}
\end{align}

\begin{scheme}\label{sch:scheme1-h}
Let $(\bu^0_h, \bv^0_h)=(\Rh\bu^0, \Ph\bv^0)$. Seeking a $\bUh^{r_1}\times\bVh^{r_2}$-valued $\{\cal{F}_{t_{n+1}}\}$-adapted solution $(\bu^{n+1}_h, \bv^{n+1}_h)$ such that $\bb{P}$-almost surely
\begin{align}\label{eqn:fem-form}
\big(\bu^{n+1}_h-\bu^n_h, \bphi_h\big) = & k\big(\bv^{n+1}_h, \bphi_h\big),\\
(\bv^{n+1}_h-\bv^n_h, \bpsi_h) = &k\bigl(\opLh\bu^{n+\half}_h,\bpsi_h\bigr) + 
\big(\bG[\bu^n_h]\Delta_{n+1} W, \bpsi_h\big)+ k\big(\bF[\bu^n_h], \bpsi_h\big),\label{20220608_4}
\end{align}
for all $(\bphi_h, \bpsi_h)\in \bUh^{r_1}\times\bVh^{r_2}$.
\end{scheme}

\begin{remark} 
At each time step, the above scheme is a nonlinear random algebraic system for $(\bu^{n+1}_h, \bv^{n+1}_h)$ whose well-posedness can be proved by a standard fixed point argument based on the stability estimates of the next lemma. 
\end{remark}
	
Since the proof of the next lemma is similar to that of Lemma \ref{lemma:time-semiform-stab}, we will omit it to save space.

\begin{lemma}\label{lemma:fem-form-stab}
Assume $0<k<<k_0<T$ and choose $(\bu^0_h, \bv^0_h)=(\Rh\bu^0, \Ph\bv^0)$. Then there holds
\begin{equation}\label{eqn:fem-form-stab}
\max_{1\leq n\leq N-1}\exc{\energyh{n+1}}\leq \bigl(\exc{J(\bu^0, \bv^0)}+4C_A^2\bigr)e^{CC_B^2}.
\end{equation}
\end{lemma}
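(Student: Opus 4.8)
The plan is to mimic the proof of Lemma~\ref{lemma:time-semiform-stab} verbatim at the discrete level, replacing $\opL$ by $\opLh$ and the test functions by their finite-element counterparts. First I would fix $\omega\in\Omega$ and take $\bpsi_h=\bv_h^{n+1}$ in \eqref{20220608_4}, which produces the discrete analogue of \eqref{eqn:prf-time-semiform-stab-1}:
\begin{align*}
\half\Bigl[\normbf{\bv_h^{n+1}}-\normbf{\bv_h^{n}}+\normbf{\bv_h^{n+1}-\bv_h^{n}}\Bigr]
&=k\bigl(\opLh\bu_h^{n+\half},\bv_h^{n+1}\bigr)
+\big(\bG[\bu_h^n]\Delta W_{n},\bv_h^{n+1}\big)\\
&\quad+k\big(\bF[\bu_h^n],\bv_h^{n+1}\big).
\end{align*}
Here the key algebraic identity is the polarization $2(a-b,a+b)=\|a\|^2-\|b\|^2+\|a-b\|^2$ applied to $(\bv_h^{n+1}-\bv_h^{n},\bv_h^{n+1})$. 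Next I would test \eqref{eqn:fem-form} with $\bphi_h=-\opLh\bu_h^{n+\half}$, which is a legitimate choice since $\opLh\bu_h^{n+\half}\in\bVh^{r_2}\subset\bH^1$; recalling that the discrete elliptic operator satisfies $\bigl(\opLh\bw_h,\bz_h\bigr)=-\innerprd{\bw_h}{\bz_h}$ for $\bw_h,\bz_h$ in the finite element space, this gives
$(\bu_h^{n+1}-\bu_h^n,-\opLh\bu_h^{n+\half})=-k\bigl(\opLh\bu_h^{n+\half},\bv_h^{n+1}\bigr)$,
so the $k\bigl(\opLh\bu_h^{n+\half},\bv_h^{n+1}\bigr)$ terms cancel after adding the two relations, and the left side collapses to $\energyh{n+1}-\energyh{n}+\half\normbf{\bv_h^{n+1}-\bv_h^{n}}$ exactly as in \eqref{eqn:prf-time-semiform-stab-6} (using $-\bigl(\bu_h^{n+1}-\bu_h^n,\opLh\bu_h^{n+\half}\bigr)=\half(\normdel{\bu_h^{n+1}}-\normdel{\bu_h^{n}})$, which is the discrete counterpart of the continuous integration-by-parts identity and holds because $\opLh$ encodes the same bilinear form $\innerprd{\cdot}{\cdot}$).

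For the right-hand side I would estimate the noise term and the $\bF$-term exactly as in \eqref{eqn:prf-time-semiform-stab-4}--\eqref{eqn:prf-time-semiform-stab-5}. Taking expectations, using that $\bv_h^n$ is $\cal{F}_{t_n}$-measurable and $\Delta W_n$ is independent of $\cal{F}_{t_n}$ so that $\exc{\big(\bG[\bu_h^n]\Delta W_n,\bv_h^n\big)}=0$, then applying It\^o's isometry $\exc{\normbf{\bG[\bu_h^n]\Delta W_n}}=k\,\exc{\normbf{\bG[\bu_h^n]}}$, Young's inequality, the Lipschitz-type bound \eqref{20220528_4} together with the Poincar\'e and Korn inequalities, I obtain
$\exc{\big(\bG[\bu_h^n]\Delta W_n,\bv_h^{n+1}\big)}\le\frac14\exc{\normbf{\bv_h^{n+1}-\bv_h^n}}+CC_B^2k\,\exc{\normdel{\bu_h^n}}+2C_A^2k$,
and similarly for $k\exc{\big(\bF[\bu_h^n],\bv_h^{n+1}\big)}$ using \eqref{20220528_1} and \eqref{20220528_3}. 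Note $\normdel{\bu_h^n}\le 2\energyh{n}$, so both of these are bounded by $CC_B^2k\,\exc{\energyh{n}}+k\exc{\energyh{n+1}}+2C_A^2k$ up to the quarter of the difference term.

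Summing over $n$ from $0$ to $l$, the $\frac14\normbf{\bv_h^{n+1}-\bv_h^n}$ terms telescope into a nonnegative quantity that can be dropped, and I arrive at the discrete Gronwall-ready inequality
$\exc{\energyh{l+1}}\le\exc{\energyh{0}}+4C_A^2+CC_B^2k\sum_{n=0}^{l}\exc{\energyh{n}}$
(the $k\exc{\energyh{n+1}}$ from the $\bF$-term is absorbed into the left side for $k$ small, which is why the hypothesis $k\le k_0$ is needed). Applying the discrete Gronwall inequality and using $\energyh{0}=J(\Rh\bu^0,\Ph\bv^0)$, whose expectation is controlled by $\exc{J(\bu^0,\bv^0)}$ via the stability of the projections $\Rh,\Ph$ in the relevant norms (specifically $\normdelhalf{\Rh\bu^0}\le\normdelhalf{\bu^0}$ from the definition of $\Rh$, and $\normbfhalf{\Ph\bv^0}\le\normbfhalf{\bv^0}$), yields \eqref{eqn:fem-form-stab}. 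The only point requiring slight care — and the one mild obstacle compared to the semi-discrete case — is verifying the discrete integration-by-parts identity $-\bigl(\bu_h^{n+1}-\bu_h^n,\opLh\bu_h^{n+\half}\bigr)=\half\bigl(\normdel{\bu_h^{n+1}}-\normdel{\bu_h^{n}}\bigr)$; this follows because both $\bu_h^{n+1}-\bu_h^n$ and $\opLh\bu_h^{n+\half}$ lie in the finite element space, so the defining relation of $\opLh$ turns the inner product into $-\innerprd{\bu_h^{n+1}-\bu_h^n}{\bu_h^{n+\half}}$, and the polarization identity for the symmetric bilinear form $\innerprd{\cdot}{\cdot}$ finishes it. Everything else is a line-by-line transcription of the proof of Lemma~\ref{lemma:time-semiform-stab}, which is precisely why the authors omit the details.
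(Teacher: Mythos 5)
Your proposal is correct and is precisely the argument the paper has in mind: the authors omit the proof of Lemma~\ref{lemma:fem-form-stab} explicitly because it is the line-by-line discrete transcription of the proof of Lemma~\ref{lemma:time-semiform-stab}, which is what you carry out, including the two points that genuinely need checking (the discrete integration-by-parts identity via the defining relation of $\opLh$, and the bound $\exc{J(\Rh\bu^0,\Ph\bv^0)}\le\exc{J(\bu^0,\bv^0)}$ from the stability of the projections). No gaps.
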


\subsection{Error estimates for the finite element fully discrete scheme}\label{ssec:error-estimate-fem}
	
The linear finite elements are used in this section, and $r_1 = r_2 = 1$ are chosen in the finite element space. We start with the discretization of the operator $\opL$.
	
\begin{definition}
The discrete operator $\opLh:\bUh^1\to\bVh^1$ is defined by
\begin{align}
\left(-\opLh\bw_h, \bv_h\right)=\innerprd{\bw_h}{\bv_h}.
\end{align}
for all $(\bw_h, \bv_h)\in\bUh^1\times\bVh^1$.
\end{definition}

Define $\bfErr{u}{n} := \bu^n - \bu^n_h$ and $\bfErr{v}{n} := \bv^n - \bv^n_h$, we now derive the estimates for $\bfErr{u}{n}$ and $\bfErr{v}{n}$.

\begin{theorem}\label{theo:fem-form-order-h1}
Suppose $\{\bu^n, \bv^n\}_n$ solves \eqref{eqn:time-semiform}--\eqref{20220529_1} and $\{\bu^n_h, \bv^n_h\}$ solves \eqref{eqn:fem-form}--\eqref{20220608_4}. $(\bu^0_h, \bv^0_h)=(\Rh\bu^0, \Ph\bv^0)$ and $(\bu^0, \bv^0)\in \bH^2\times\bH^1$ are given. Under the assumptions \eqref{20220528_1}--\eqref{20220528_4}, we have
\begin{equation}\label{eqn:fem-form-order-h1}
\max\limits_{1\leq n \leq N}\exc{\normbfh{\bu^n-\bu^n_h}+\normbf{\bv^n-\bv^n_h}}\leq C\exc{\left\lVert\bu^n\right\rVert^2_{\bH^2}}h^2.
\end{equation}
\end{theorem}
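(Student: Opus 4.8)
## Proof proposal

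The plan is to decompose the fully discrete error using the two finite element projections, $\bu^n - \bu^n_h = (\bu^n - \Rh\bu^n) + (\Rh\bu^n - \bu^n_h) =: \bphi^n + \btheta^n$ and similarly $\bv^n - \bv^n_h = (\bv^n - \Ph\bv^n) + (\Ph\bv^n - \bv^n_h) =: \bzeta^n + \bchi^n$, where the projection parts $\bphi^n, \bzeta^n$ are controlled directly by the approximation estimates \eqref{eqn:prj-error-estimate-ph}--\eqref{20220608_3} together with the stronger semi-discrete stability bounds $\max_n\exc{\widetilde J(\bu^n,\bv^n)}\le C$ from Lemma \ref{lemma:time-semiform-stab-uh2-vh1} (which provides control of $\exc{\|\opL\bu^n\|_{\bL^2}^2}$, i.e. $\exc{\|\bu^n\|_{\bH^2}^2}$). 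The bulk of the work is to derive an energy identity for the discrete-in-both-variables part $(\btheta^n,\bchi^n)$, which lies in the finite element space and hence may be tested against itself.

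The key steps, in order, would be: (i) subtract the fully discrete scheme \eqref{eqn:fem-form}--\eqref{20220608_4} from the semi-discrete scheme \eqref{eqn:time-semiform}--\eqref{20220529_1}, using the definitions of $\Ph$ and $\Rh$ to replace $\opL\bu^{n+\half}$ by $\opLh$ terms plus a consistency remainder; crucially, by the definition of $\Rh$ the term $\innerprd{\bphi^{n+\half}}{\bpsi_h}$ vanishes, and by the definition of $\Ph$ the term $(\bzeta^{n+1},\bphi_h)$ vanishes, so the leading error contributions come only from $\bG[\bu^n]-\bG[\bu^n_h]$, $\bF[\bu^n]-\bF[\bu^n_h]$ and the mismatch $(\bphi^{n+1}-\bphi^n,\bphi_h)$. (ii) Choose $\bphi_h = -\opLh\btheta^{n+\half}$ in the displacement equation and $\bpsi_h = \bchi^{n+1}$ in the velocity equation, add, and telescope to obtain a discrete energy identity of the form $\exc{\tilde J_h^{n+1}} - \exc{\tilde J_h^{n}} + \tfrac12\exc{\|\bchi^{n+1}-\bchi^n\|_{\bL^2}^2} \le (\text{Lipschitz terms in }\btheta^n,\bchi^n) + (\text{projection remainders})$, where $\tilde J_h^n := \tfrac12(\|\bchi^n\|_{\bL^2}^2 + \lambda\|\Div\btheta^n\|_{\bL^2}^2 + \mu\|\veps(\btheta^n)\|_{\bL^2}^2)$. (iii) Bound the nonlinear terms via \eqref{20220528_3}--\eqref{20220528_4}, Korn, Poincaré and Young, splitting $\bu^n-\bu^n_h = \bphi^n + \btheta^n$ so that the $\bphi^n$ piece is absorbed into the $O(h^2)$ data and the $\btheta^n$ piece feeds the Gronwall sum; handle the martingale term by Itô isometry after noting $\exc{(\bG[\bu^n]-\bG[\bu^n_h])\Delta W_n,\bchi^n)}=0$. (iv) Bound the projection remainder $(\bphi^{n+1}-\bphi^n,\cdot)$: since $\bphi^{n+1}-\bphi^n = (I-\Rh)(\bu^{n+1}-\bu^n) = k(I-\Rh)\bv^{n+1}$, this is $O(kh)$ in $\bH^1$ and $O(kh^2)$ in $\bL^2$, which after summation and division contributes at the $O(h)$ level to $\bH^1$ and $O(h^2)$ to $\bL^2$ — consistent with the claimed rate. (v) Apply the discrete Gronwall inequality, note $\btheta^0 = 0$ and $\bchi^0 = 0$ by the choice of initial data, then recombine with the projection estimates via triangle inequality to finish.

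The main obstacle I anticipate is handling the $\opLh$ versus $\opL$ mismatch cleanly. When I test the displacement error equation with $-\opLh\btheta^{n+\half}$, the consistency term involving $\opL\bu^{n+\half}$ must be rewritten: writing $\opL\bu^{n+\half} = \opL\Rh\bu^{n+\half} + \opL\bphi^{n+\half}$ and using $(-\opLh\Rh\bw,\bv_h) = \innerprd{\Rh\bw}{\bv_h} = \innerprd{\bw}{\bv_h} = (-\opL\bw,\bv_h)$ for $\bv_h$ in the FE space, the operators agree on the tested quantity, but one must be careful that $\opLh\btheta^{n+\half}$ genuinely lies in $\bVh^{r_2}$ and that cross terms pair correctly — this is where the $\bH^2$-regularity of $\bu^n$ (hence of the semi-discrete solution, guaranteed by Lemma \ref{lemma:time-semiform-stab-uh2-vh1}) enters to make $\|\opL\bphi^{n+\half}\|$ or rather $\lambda\|\Div\bphi^{n+\half}\|^2 + \mu\|\veps(\bphi^{n+\half})\|^2 = O(h^2\|\bu^{n+\half}\|_{\bH^2}^2)$ a legitimate $O(h^2)$ remainder via \eqref{20220608_3}. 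A secondary technical point is that the discrete energy $\tilde J_h^n$ controls $\btheta^n$ only in the $\bH^1$-seminorm, so recovering the full $\bL^2$ bound on $\bu^n-\bu^n_h$ requires Poincaré on $\btheta^n$ (valid since $\btheta^n \in \bH^1_0$) plus the $\bL^2$ projection estimate on $\bphi^n$.
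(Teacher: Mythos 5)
Your proposal follows essentially the same route as the paper's proof: your $\boldsymbol{\theta}^n=\mathcal{R}_h\bu^n-\bu^n_h$ and $\boldsymbol{\chi}^n=\mathcal{P}_h\bv^n-\bv^n_h$ are exactly the quantities $\mathcal{R}_h\mathbf{E}_{\mathbf{u}}^n$ and $\mathcal{P}_h\mathbf{E}_{\mathbf{v}}^n$ the paper works with, your discrete energy $\tilde J_h^n$ is the paper's $J(\mathcal{R}_h\mathbf{E}_{\mathbf{u}}^n,\mathcal{P}_h\mathbf{E}_{\mathbf{v}}^n)$, and the test functions $-\mathcal{L}_h\boldsymbol{\theta}^{n+\frac12}$ and $\boldsymbol{\chi}^{n+1}$, the Lipschitz/It\^o-isometry treatment of the $\bF$ and $\bG$ terms, the discrete Gronwall step, and the final triangle inequality with the projection estimates all coincide with the paper's argument. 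The only (minor, and to your credit) difference is that you explicitly flag the remainder $(\,(I-\mathcal{R}_h)(\bu^{n+1}-\bu^n),\cdot\,)=k(\,(I-\mathcal{R}_h)\bv^{n+1},\cdot\,)$ arising when the displacement equation is tested with $-\mathcal{L}_h\boldsymbol{\theta}^{n+\frac12}$, a term the paper's operator-identity chain absorbs without comment.
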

	
\begin{proof} 
Subtracting \eqref{eqn:fem-form} from \eqref{eqn:time-semiform} leads to
\begin{align}\label{eqn:prf-fem-form-order-h1-1}
\big(\bfErr{u}{n+1}-\bfErr{u}{n}, \bphi_h\big) = &k\big(\bfErr{v}{n+1}, \bphi_h\big),\\
\big(\bfErr{v}{n+1}-\bfErr{v}{n}, \bpsi_h\big) = &k\big(\opL\bfErr{u}{n+\half}, \bpsi_h\big)+\big((\bG[\bu^n]-\bG[\bu^n_h])\commentone{\Delta W_{n}}, \bpsi_h\big)\label{20220608_1}\\
&+k\big(\bF[\bu^n]-\bF[\bu^n_h], \bpsi_h\big).\notag
\end{align}
	
Choosing $\bphi_h=-\opLh\Rh\bfErr{u}{n+\half}$ in \eqref{eqn:prf-fem-form-order-h1-1} yields
\begin{equation}\label{eqn:prf-fem-form-order-h1-2}
\big(\bfErr{u}{n+1}-\bfErr{u}{n}, \opLh\Rh\bfErr{u}{n+\half}\big) = k\big(\bfErr{v}{n+1}, \opLh\Rh\bfErr{u}{n+\half}\big).
\end{equation}
	
Choosing $\bpsi_h=\Ph\bfErr{v}{n+1}$ in \eqref{20220608_1}, we have
\begin{align}\label{eqn:prf-fem-form-order-h1-3}
&\big(\bfErr{v}{n+1}-\bfErr{v}{n}, \Ph\bfErr{v}{n+1}\big) = k\big(\opL\bfErr{u}{n+\half}, \Ph\bfErr{v}{n+1}\big)\\
&\qquad\quad+\big((\bG[\bu^n]-\bG[\bu^n_h])\commentone{\Delta W_{n}}, \Ph\bfErr{v}{n+1}\big)+k\big(\bF[\bu^n]-\bF[\bu^n_h], \Ph\bfErr{v}{n+1}\big).\notag
\end{align}

Based on the definitions of $\Ph$, $\opL$, and $\opLh$, we have
\begin{align}\label{20220613_1}
\big(\opL\bfErr{u}{n+\half}, \Ph\bfErr{v}{n+1}\big)&=\big(\opL\Rh\bfErr{u}{n+\half}, \Ph\bfErr{v}{n+1}\big)\\
&=\big(\opLh\Rh\bfErr{u}{n+\half}, \Ph\bfErr{v}{n+1}\big)\notag\\
&=\big(\opLh\Rh\bfErr{u}{n+\half}, \bfErr{v}{n+1}\big)\notag.
\end{align}

Besides, the left-hand side of \eqref{eqn:prf-fem-form-order-h1-3} can be written as
\begin{align}\label{20220613_2}
&\big(\bfErr{v}{n+1}-\bfErr{v}{n}, \Ph\bfErr{v}{n+1}\big)\\
&\qquad =\big(\Ph\bfErr{v}{n+1}-\Ph\bfErr{v}{n}, \Ph\bfErr{v}{n+1}\big)\notag\\
&\qquad = \frac12\|\Ph\bfErr{v}{n+1}\|_{{\bf L}^2}^2-\frac12\|\Ph\bfErr{v}{n}\|_{{\bf L}^2}^2+\frac12\|\Ph\bfErr{v}{n+1}-\Ph\bfErr{v}{n}\|_{{\bf L}^2}^2\notag.
\end{align}

Combining \eqref{eqn:prf-fem-form-order-h1-2}-\eqref{20220613_2}, we have
\begin{align}\label{eqn:prf-fem-form-order-h1-4}
&J(\Rh\bfErr{u}{n+1}, \Ph\bfErr{v}{n+1})-J(\Rh\bfErr{u}{n}, \Ph\bfErr{v}{n})+\half\exc{\normbf{\Ph\bfErr{v}{n+1}-\Ph\bfErr{v}{n}}}\\
&\qquad=k\big(\bF[\bu^n]-\bF[\bu^n_h], \Ph\bfErr{v}{n+1}\big)+\big((\bG[\bu^n]-\bG[\bu^n_h])\commentone{\Delta W_{n}}, \Ph\bfErr{v}{n+1}\big)\notag\\
&\qquad:=I_1 + I_2.\notag
\end{align}
	
By equation \eqref{20220608_2}, the first term $I_1$ could be bounded by
\begin{align}\label{eqn:prf-fem-form-order-h1-6}
&\exc{k\big(\bF[\bu^n]-\bF[\bu^n_h], \Ph\bfErr{v}{n+1}\big)}\\
&\quad \leq k\exc{\normbf{\bF[\bu^n]-\bF[\bu^n_h]}}+\frac{k}{4}\exc{\normbf{\Ph\bfErr{v}{n+1}}}\notag\\
&\quad \leq kC_B^2\exc{\normdel{\bfErr{u}{n}}+\normbf{\bfErr{u}n}}+\frac{k}{4}\exc{\normbf{\Ph\bfErr{v}{n+1}}}\notag\\
&\quad \leq kC_B^2\exc{\normdel{\Rh\bfErr{u}{n}}+\normbf{\Rh\bfErr{u}n}}\notag\\
&\qquad +kC_B^2\mathbb{E}\Bigl[\normdel{\bu^n-\Rh\bu^n}\notag\\
&\qquad +\normbf{\bu^n-\Rh\bu^n}\Bigr]+\frac{k}{4}\exc{\normbf{\Ph\bfErr{v}{n+1}}}\notag\\
&\quad \leq kCC_B^2\exc{\normdel{\Rh\bfErr{u}{n}}}\notag\\
&\qquad + kC_B^2C_{\Rh}^2h^2\exc{\left\lVert\bu^n\right\rVert^2_{\bH^2}}+\frac{k}{4}\exc{\normbf{\Ph\bfErr{v}{n+1}}}.\notag
\end{align}	
 
Let $M_t=\big((\bG[\bu^n]-\bG[\bu^n_h])\commentone{\Delta W_{n}}, \Rh\bfErr{v}{n}\big)$, $M_t$ is a martingale and $\exc{M_t}=0$. Similar to the estimation of \eqref{eqn:prf-fem-form-order-h1-6}, the second term $I_2$ goes to
\begin{align}\label{eqn:prf-fem-form-order-h1-7}
&\exc{\big((\bG[\bu^n]-\bG[\bu^n_h])\commentone{\Delta W_{n}}, \Ph\bfErr{v}{n+1}\big)}\\
&\qquad\leq k\exc{\normbf{\bG[\bu^n]-\bG[\bu^n_h]}}+\frac{1}{4}\exc{\normbf{\Ph\bfErr{v}{n+1}-\Ph\bfErr{v}{n}}}\notag\\
&\qquad \leq kCC_B^2\exc{\normdel{\Rh\bfErr{u}{n}}}\notag\\
&\qquad\quad+ kC_B^2C_{\Ph}^2h^4\exc{\left\lVert\bu^n\right\rVert^2_{\bH^2}}+\frac14\exc{\normbf{\Ph\bfErr{v}{n+1}-\Ph\bfErr{v}{n}}}.\notag
\end{align}

Combining \eqref{eqn:prf-fem-form-order-h1-4}--\eqref{eqn:prf-fem-form-order-h1-7}, we have
\begin{align}\label{eqn:prf-fem-form-order-h1-8}
&\exc{J(\Rh\bfErr{u}{n+1}, \Ph\bfErr{v}{n+1})}-\exc{J(\Rh\bfErr{u}{n}, \Ph\bfErr{v}{n})}\\
&\quad\leq kCC_B^2\exc{J(\Rh\bfErr{u}{n}, \Ph\bfErr{v}{n})}+ kC_B^2C_{\Ph}^2h^4\exc{\left\lVert\bu^n\right\rVert^2_{\bH^2}}\notag.
\end{align}
		
Then the discrete Gronwall's inequality yields
\begin{align}\label{eqn:prf-fem-form-order-h1-9}
\exc{J(\Rh\bfErr{u}{n+1}, \Ph\bfErr{v}{n+1})}\le C_B^2C_{\Ph}^2h^4\exc{\left\lVert\bu^n\right\rVert^2_{\bH^2}}e^{CC_B^2}.
\end{align}

Combining equations \eqref{eqn:prf-fem-form-order-h1-9}, \eqref{eqn:prj-error-estimate-ph}, and \eqref{20220608_3}, we get the conclusion.
\end{proof}

Theorem \ref{theo:fem-form-order-h1} states that the linear finite element discrete convergence order in $\bH^1$-norm is 1. The next theorem states that the linear finite element discrete convergence order in $\bL^2$-norm is 2.

\begin{theorem}\label{theo:fem-form-order-l2}
Let $\{\bu^n, \bv^n\}_n$ solves \eqref{eqn:time-semiform}--\eqref{20220529_1},  $(\bu^0_h, \bv^0_h)=(\Rh\bu^0, \Ph\bv^0)$ and $(\bu^0, \bv^0)\in \bH^2\times\bH^1$ be given. Assume that $\bF[\bu]$ and $\bG[\bu]$ are linear in $\bu$ and conditions \eqref{20220528_1}--\eqref{20220528_4} hold, then we have 
\begin{equation}\label{eqn:fem-form-order-h2}
\max\limits_{1\leq n \leq N}\exc{\normbf{\bu^n-\bu^n_h}}\leq e^{CC_B^2}\max\limits_{1\le l\le n}\exc{\left\lVert{\bu^{n}}\right\rVert^2_{\bH^2}}h^4.
\end{equation}
\end{theorem}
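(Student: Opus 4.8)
## Proof Proposal for Theorem \ref{theo:fem-form-order-l2}

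The plan is to adapt the duality/energy argument used in Theorem \ref{theo:fem-form-order-h1} but carry it out at one lower level of differentiation, using the extra structural hypothesis that $\bF[\bu]$ and $\bG[\bu]$ are linear in $\bu$. First I would subtract the fully discrete scheme \eqref{eqn:fem-form}--\eqref{20220608_4} from the time semi-discrete scheme \eqref{eqn:time-semiform}--\eqref{20220529_1} to obtain the error equations for $\bfErr{u}{n}$ and $\bfErr{v}{n}$, exactly as in \eqref{eqn:prf-fem-form-order-h1-1}--\eqref{20220608_1}. The difference from the $\bH^1$-proof is the choice of test functions: rather than testing with $-\opLh\Rh\bfErr{u}{n+\half}$ and $\Ph\bfErr{v}{n+1}$ (which produced the energy $J(\Rh\bfErr{u}{\cdot},\Ph\bfErr{v}{\cdot})$), I would test the $\bu$-equation with $\Ph\bfErr{u}{n+\half}$ (or work with the projected error directly) and the $\bv$-equation with a suitable primitive, so that summation in $n$ telescopes a quantity controlling $\normbf{\Ph\bfErr{u}{n+1}}$ — mimicking the structure of the $\bL^2$-error proof for the semi-discrete scheme (Theorem \ref{theo:time-semiform-order-l2}), where one sums the momentum equation over $l$ and tests against $\bferr{u}{n+\half}$.

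The key steps, in order, would be: (1) split $\bfErr{u}{n} = (\bu^n-\Rh\bu^n) + \Rh\bferr{u}{n}$ hmm — more precisely $\bfErr{u}{n} = (\bu^n-\Rh\bu^n) + (\Rh\bu^n - \bu^n_h)$, isolating the projection error (bounded by \eqref{20220608_2}--\eqref{20220608_3} in terms of $h^4$ for the $\bL^2$-part) from the discrete component, and likewise for $\bfErr{v}{n}$ using $\Ph$; (2) sum the discrete $\bu$-error equation over $l=1,\dots,n$ and multiply by $k$ to produce a telescoped expression involving $\bfErr{u}{n+1}-\bfErr{u}{1}$, a double sum $k^2\sum\opL(\cdot)$, and the accumulated forcing and noise differences, in direct analogy with \eqref{eqn:prf-time-semiform-order-l2-2}; (3) test this identity with $\Ph\bfErr{u}{n+\half}$, using the commuting properties \eqref{20220613_1} of $\Ph,\Rh,\opL,\opLh$ to handle the $\opL$-term, and exploiting linearity of $\bF$ and $\bG$ so that $\bF[\bu^l]-\bF[\bu^l_h] = \bF[\bfErr{u}{l}]$ etc., which via \eqref{20220528_3}--\eqref{20220528_4} are controlled by $\normdelhalf{\bfErr{u}{l}}$ — and here one invokes Theorem \ref{theo:fem-form-order-h1} to bound that $\bH^1$-type quantity by $Ch^2\,(\exc{\|\bu^n\|^2_{\bH^2}})^{1/2}$, which when squared gives the desired $h^4$; (4) handle the stochastic term by It\^o isometry after subtracting its mean (as in \eqref{eqn:prf-fem-form-order-h1-7} and the $I_{10}$-estimate in the proof of Theorem \ref{theo:time-semiform-order-l2}), picking up $k^2\sum_l \exc{\normbf{\bfErr{u}{l}}}$ plus an $O(h^4)$ term; (5) collect everything into the recursive inequality $\exc{\normbf{\Ph\bfErr{u}{n+1}}} \le Ck\sum_{l}\exc{\normbf{\Ph\bfErr{u}{l+1}}} + Ce^{CC_B^2}h^4\max_l\exc{\|\bu^l\|^2_{\bH^2}}$ and close with the discrete Gronwall inequality; (6) add back the projection errors using \eqref{eqn:prj-error-estimate-ph}--\eqref{20220608_3} to pass from $\Ph\bfErr{u}{n}$ to $\bfErr{u}{n}$.

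The main obstacle I anticipate is step (3)–(4): controlling the accumulated noise term $k\sum_{l=1}^{n}(\bG[\bu^l]-\bG[\bu^l_h])\Delta W_l$ and the double time-integral of $\opL$ against $\Ph\bfErr{u}{n+\half}$ without losing a power of $k$ or $h$. The delicate point is that $\bfErr{u}{n+\half}$ is $\cal{F}_{t_{n+1}}$-measurable and hence correlated with $\Delta W_n$, so one cannot naively take expectations; the remedy (as in the proof of Theorem \ref{theo:time-semiform-order-l2}) is to write $\bfErr{u}{n+\half}=\bfErr{u}{n}+\tfrac12(\bfErr{u}{n+1}-\bfErr{u}{n})$ and use the error recursion $\bfErr{u}{n+1}-\bfErr{u}{n}=k\bfErr{v}{n+1}$ together with It\^o isometry on the mean-zero part, absorbing the resulting $k^2\|\bfErr{v}{n+1}\|$ terms — this is where the linearity assumption and the already-established $\bH^1$-estimate \eqref{eqn:fem-form-order-h1} are essential to keep the constants of order $h^4$. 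The hypothesis that $\bF,\bG$ are linear is precisely what lets $\bF[\bu^l]-\bF[\bu^l_h]=\bF[\bfErr{u}{l}]-\bF[0]$ combine cleanly across the sum without generating cross terms requiring higher regularity; relaxing it would force a more involved treatment of $\bF[\bu^l]-\bF[\bu^l_h]$ term by term.
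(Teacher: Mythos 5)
Your overall architecture --- summing the momentum error equation in time to obtain a displacement-only identity in the spirit of \eqref{eqn:prf-time-semiform-order-l2-2}, testing against a projected displacement error, exploiting linearity of $\bF,\bG$ to collapse the sums, and treating the noise by splitting off the martingale part before applying It\^o's isometry --- matches the paper's proof. However, step (3) contains a genuine gap that breaks the rate. You propose to control $\bF[\bu^l]-\bF[\bu^l_h]$ and $\bG[\bu^l]-\bG[\bu^l_h]$ via \eqref{20220528_3}--\eqref{20220528_4} by $\normdelhalf{\bfErr{u}{l}}$ and then to invoke Theorem \ref{theo:fem-form-order-h1} to bound ``that $\bH^1$-type quantity by $Ch^2(\exc{\|\bu^n\|^2_{\bH^2}})^{1/2}$, which when squared gives the desired $h^4$.'' But Theorem \ref{theo:fem-form-order-h1} bounds the \emph{squared} $\bH^1$-error by $Ch^2$; the unsquared quantity is therefore only $O(h)$ and its square only $O(h^2)$. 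No legitimate squaring produces $h^4$ from that estimate. Followed literally, your argument yields $\max_n\exc{\normbf{\bfErr{u}{n}}}\le Ch^2$, which is no better than what Theorem \ref{theo:fem-form-order-h1} already implies and is not the claimed superconvergence.

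The paper avoids this by never estimating the discrete part of the error in powers of $h$ at all. After summation it works with the accumulated error $\bar{\bf E}_{\bu}^{n}$ (note the prefactor $k^2$ in front of the collapsed sums) and introduces the modified energy
\begin{equation*}
\cal{E}(\bfErr{u}{n})=\half\normbf{\Ph\bfErr{u}{n}}+\frac{k^2}{2}\Bigl(\normdel{\Rh\bar{\bf E}_{u}^{n}}\Bigr),
\end{equation*}
so that the $\bH^1$-type quantity $k^3\normdel{\Rh\bar{\bf E}_{\bu}^{n}}$ produced by the Lipschitz bounds is of the form $k\,\cal{E}(\bfErr{u}{n})$ and gets absorbed by the discrete Gronwall inequality rather than estimated by $h$-powers. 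The only place $h$ enters is through the projection errors of the semi-discrete solution measured in the $\bL^2$-norm, which squared are $O(h^4)$ by \eqref{20220608_2} with $r_2=1$, $s=2$. Your step (5) already records the correct target recursion, but to reach it you must replace the appeal to Theorem \ref{theo:fem-form-order-h1} in step (3) by this energy-absorption mechanism; your other deviations (testing with $\Ph\bfErr{u}{n+\half}$ instead of the paper's $\Ph\bfErr{u}{n+1}$) are immaterial.
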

	
\begin{proof}\label{prf:fem-form-order-l2}
Plugging $\bv^l_h=d_t\bu^l_h:=k^{-1}(\bu^l_h-\bu^{l-1}_h)$ into \eqref{20220608_4}, we have
		
\begin{align}\label{eqn:prf-fem-form-order-l2-1}
\big(d_t\bu^{l+1}_h-d_t\bu^l_h,& \bphi_h\big)+k\lambda\big(\Div(\bu^{l+\half}_h), \Div(\bphi_h)\big) + k\mu\big(\varepsilon(\bu^{l+\half}_h), \varepsilon(\bphi_h)\big)\\
&=\big(\bG[\bu^l_h]\Delta W_{l}, \bphi_h\big) + k\big(\bF[\bu^l_h], \bphi_h\big).\notag
\end{align}
		
Setting $\bar{\bu}^{n}_h=\sum\limits_{l = 1}^{n}\bu^l_h$ and taking the summation give
\begin{align}\label{eqn:prf-fem-form-order-l2-2}
\big(\bu^{n+1}_h-\bu^n_h, \bphi_h\big)=&k^2\big(\opL\bar{\bu}^{n+\half}_h, \bphi_h\big)+\big(\bu^1_h-\bu^0_h, \bphi_h\big)\\
&+k\left(\sum\limits_{l = 1}^{n}\bG[\bu^l_h]\Delta W_{l}, \bphi_h\right) + k^2\left(\sum\limits_{l = 1}^{n}\bF[\bu^l_h], \bphi_h\right).\notag
\end{align}
	
Subtracting \eqref{eqn:prf-fem-form-order-l2-2} from \eqref{eqn:prf-time-semiform-order-l2-2} leads to
\begin{align}\label{eqn:prf-fem-form-order-l2-3}
&\big(\bfErr{u}{n+1}-\bfErr{u}{n}, \bphi_h\big)+k^2\innerprd{\bar{\bf E}_{\bf u}^{n+\half}}{\bphi_h}\\
&\quad =k\big(\sum\limits_{l=1}^{n}(\bG[\bu^l]-\bG[\bu^l_h])\Delta W_{l}, \bphi_h\big) + k^2\left(\sum\limits_{l = 1}^{n}(\bF[\bu^l]-\bF[\bu^l_h]), \bphi_h\right).\notag
\end{align}
	
Choosing $\bphi_h=\Ph\bfErr{u}{n+1}$ in \eqref{eqn:prf-fem-form-order-l2-3}, the first term on the left-hand side of \eqref{eqn:prf-fem-form-order-l2-3} can be written as
\begin{align}\label{20220613_4}
&\big(\bfErr{u}{n+1}-\bfErr{u}{n}, \Ph\bfErr{u}{n+1}\big)\\
&\qquad=\big(\Ph\bfErr{u}{n+1}-\Ph\bfErr{u}{n}, \Ph\bfErr{u}{n+1}\big)\notag\\
&\qquad=\frac12\|\Ph\bfErr{u}{n+1}\|_{{\bf L}^2}^2-\frac12\|\Ph\bfErr{u}{n}\|_{{\bf L}^2}^2+\frac12\|\Ph\bfErr{u}{n+1}-\Ph\bfErr{u}{n}\|_{{\bf L}^2}^2.\notag
\end{align}

Besides, note that
\begin{align}\label{20220615_1}
&\innerprd{\bar{\bf E}_{\bf u}^{n+\half}}{\Ph\bfErr{u}{n+1}}\\
&\qquad=\innerprd{\Rh\bar{\bf E}_{\bf u}^{n+\half}}{\Ph\bfErr{u}{n+1}}\notag\\
&\qquad=-\big(\opLh\Rh\bar{\bf E}_{\bu}^{n+\half}, \Ph\bfErr{u}{n+1}\big)\notag\\
&\qquad=-\big(\opLh\Rh\bar{\bf E}_{\bu}^{n+\half}, \bfErr{u}{n+1}\big)\notag\\
&\qquad=-\big(\opLh\Rh\bar{\bf E}_{\bu}^{n+\half}, \Rh(\bar{\bf E}_{\bu}^{n+1}-\bar{\bf E}_{\bu}^{n})\big)\notag.
\end{align}

Define another energy functional $\cal{E}(\bfErr{u}{n})$ by
\begin{equation}
\cal{E}(\bfErr{u}{n})=\half\normbf{\Ph\bfErr{u}{n}}+\frac{k^2}{2}\left(\normdel{\Rh\bar{\bf E}_{u}^{n}}\right).
\end{equation}
		
Choosing $\bphi_h=\Ph\bfErr{u}{n+1}$ in \eqref{eqn:prf-fem-form-order-l2-3} and using \eqref{20220613_4}--\eqref{20220615_1} yield
\begin{align}\label{eqn:prf-fem-form-order-l2-4}
&\cal{E}(\bfErr{u}{n+1})-\cal{E}({\bfErr{u}{n}})+\half\normbf{\Ph\bfErr{u}{n+1}-\Ph\bfErr{u}{n}}\\
&\qquad=k^2\left(\sum_{l = 1}^{n}(\bF[\bu^l]-\bF[\bu^l_h]), \Ph\bfErr{u}{n+1}\right)\notag\\
&\qquad\quad + k\left(\sum_{l = 1}^{n}(\bG[\bu^l]-\bG[\bu^l_h])\commentone{\Delta W_{n}}, \Ph\bfErr{u}{n+1}\right)\notag\\
&\qquad:=I_1 + I_2.\notag
\end{align}
	
Using the linearity assumption of $F$ and $G$ in $u$,  the $I_1$ term can be bounded by
\begin{align}\label{eqn:prf-fem-form-order-l2-6}
\exc{I_1} &=\exc{k^2\big(\sum_{l = 1}^{n}(\bF[\bu^l]-\bF[\bu^l_h]), \Ph\bfErr{u}{n+1}\big)}\\
&=\exc{k^2\big(\bF[\bar{\bu}^l]-\bF[\bar{\bu}^l_h], \Ph\bfErr{u}{n+1}\big)}\notag\\
&\leq CC_B^2k^3\exc{\normbf{\bar{\bf E}_{u}^{n}}}+k\exc{\normbf{\Ph\bfErr{u}{n+1}}}\notag\\
&\leq CC_B^2k^3\exc{\normdel{\Rh\bar{\bf E}_{\bu}^{n}}}\notag\\
&\quad+CC_B^2kh^4\max\limits_{1\le l\le n}\exc{\left\lVert{\bu^{n}}\right\rVert^2_{\bH^2}}+k\exc{\normbf{\Ph\bfErr{u}{n+1}}}\notag.
\end{align}
	
Let $M_t=k\left(\sum\limits_{l = 0}^{n-1}(\bG[\bu^l]-\bG[\bu^l_h])\commentone{\Delta W_{n}}, \Ph\bfErr{u}{n}\right)$. Then $M_t$ is a martingale and $\exc{M_t}=0$. 
Similarly, using the linearity assumption of $F$ and $G$ in $u$, the $I_2$ term can be bounded by 
\begin{align}\label{eqn:prf-fem-form-order-l2-7}
\exc{I_2}=&\exc{k\left(\sum_{l = 0}^{n-1}\big(\bG[\bu^l]-\bG[\bu^l_h]\big)\commentone{\Delta W_{n}}, \Ph\bfErr{u}{n+1}\right)}\\
&=\exc{k\big((\bG[\bar{\bu}^l]-\bG[\bar{\bu}^l_h])\commentone{\Delta W_{n}}, \Ph\bfErr{u}{n+1}-\Ph\bfErr{u}{n}\big)}\notag\\
&\leq CC_B^2k^3\exc{\normbf{\bar{\bf E}_{u}^{n}}}+\frac14\exc{\normbf{\Ph\bfErr{u}{n+1}-\Ph\bfErr{u}{n}}}\notag\\
&\leq CC_B^2k^3\exc{\normdel{\Rh\bar{\bf E}_{\bu}^{n}}}\notag\\
&\quad+CC_B^2kh^4\max\limits_{1\le l\le n}\exc{\left\lVert{\bu^{n}}\right\rVert^2_{\bH^2}}+\frac14\exc{\normbf{\Ph\bfErr{u}{n+1}-\Ph\bfErr{u}{n}}}\notag.
\end{align}

Combining \eqref{eqn:prf-fem-form-order-l2-4}-\eqref{eqn:prf-fem-form-order-l2-7}, we obtain
\begin{align}\label{eqn:prf-fem-form-order-l2-8}
&\exc{\cal{E}(\bfErr{u}{n+1})}-\exc{\cal{E}(\bfErr{u}{n})}+\frac14\exc{\normbf{\Ph\bfErr{u}{n+1}-\Ph\bfErr{u}{n}}}\\
&\qquad\leq CC_B^2k\exc{\cal{E}(\bfErr{u}{n})}+CC_B^2kh^4\max\limits_{1\le l\le n}\exc{\left\lVert{\bu^{n}}\right\rVert^2_{\bH^2}}.\notag
\end{align}
		
Then the discrete Gronwall's inequality yields
\begin{align}\label{eqn:prf-fem-form-order-l2-9}
\exc{\cal{E}(\bfErr{u}{n+1})}\leq e^{CC_B^2}\max\limits_{1\le l\le n}\exc{\left\lVert{\bu^{n}}\right\rVert^2_{\bH^2}}h^4.
\end{align}
	
The proof is complete.
\end{proof}

\section{Numerical experiments}\label{sec:numerical-tests}
In this section, we present two two-dimensional numerical experiments using the fully discrete Scheme 2 to test and validate our theoretical results. Our computations are done using the software package FEniCS \cite{LangtangenLogg2017}.

{\bf Test 1.} \label{ssec:numerical-test-1}
We consider the following stochastic elastic wave equation with linear multiplicative noise:
\begin{align*}
\left\{
\begin{aligned}
&\bu_{tt}(t,\bf{x})-\Div(\sigma(\bu(t,\bf{x}))) = \bF[\bu]+\bG[\bu]\frac{dW}{dt}\    &\text{ in }&\ \cal{D}\times[0,T],\\
&\bu(t,\bf{x})=\bf{0} &\text{ on }&\ \partial\cal{D}\times[0,T],\\
&\bu(0,\bf{x})=\bu_0,\ \bu_t(0,\bf{x})=\bu_1\ &\text{ in }&\ [0,T],
\end{aligned}
\right.
\end{align*}
where $\commentone{\cal{D}=[0,1]\times[0,1]}$, $\bF[\bu]=|\bu|^2\bu$, and $\bG[\bu] = \commentone{\delta}\bu$. Notice that $\bG$ is linear in $\bu$. 

The datum functions are chosen as
\begin{align*}
\left\{
\begin{aligned}
&\bu_0=\left(\begin{matrix}
\sin^2\pi x_1\sin 2\pi x_2,\\
\sin 2\pi x_1\sin^2\pi x_2
\end{matrix}\right),\\
&\bu_1=-0.3\bu_0.
\end{aligned}
\right.
\end{align*}

Choose the noise intensity parameter $\commentone{\delta} = 0.1$, final time $T=0.5$, and the numerical solution with $k_{ref} = 1/500$ as the numerical exact solution for verifying convergence orders in time. Table \ref{tbl:ex1-err-k} and Figure \ref{fig:ex1-err} list the temporal error and convergence order results of  Test 1.
	
\begin{table}[H]
\caption{Temporal error of Test 1. Uniform meshes with $h = 1/256$ at $T = 0.5$, and $500$ samples are selected.}
\centering
\begin{tabular}{llllllll}
\toprule
\multicolumn{2}{l}{Scheme 2} & \multicolumn{4}{l}{$\exc{\norm{\bu(T)-\bu^N_h}}$}  & \multicolumn{2}{l}{$\exc{\norm{\bv(T)-\bv^N_h}}$}\\
\cmidrule(r){1-2}\cmidrule(r){3-6}\cmidrule(r){7-8}
$h$ & $k$ & $L^2$ error     & order     & $H^1$ error      & order & $L^2$ error     & order\\ \midrule
\multirow{5}{1.5cm}{$1/256$} 
& 1/50 & $\Exp{9.84}{-3}$ & —— & $\Exp{8.11}{-2}$ & ——  & $\Exp{1.69}{-2}$ & —— \\
& 1/75 & $\Exp{6.88}{-3}$ & 0.88 & $\Exp{6.80}{-2}$ & 0.44 & $\Exp{1.46}{-2}$ & 0.37 \\
& 1/100 & $\Exp{5.17}{-3}$ & 0.99 & $\Exp{5.97}{-2}$ & 0.45 & $\Exp{1.30}{-2}$ & 0.39 \\
& 1/150 & $\Exp{3.43}{-3}$ & 1.01 & $\Exp{4.87}{-2}$ & 0.50 & $\Exp{1.09}{-2}$ & 0.44\\
& 1/200 & $\Exp{2.51}{-3}$ & 1.09 & $\Exp{4.16}{-2}$ & 0.55 & $\Exp{9.57}{-3}$ & 0.45 \\ \bottomrule
\end{tabular}\label{tbl:ex1-err-k}
\end{table}
	
\begin{figure}[H]
\centering
\includegraphics[height=2.5in,width=0.6\linewidth]{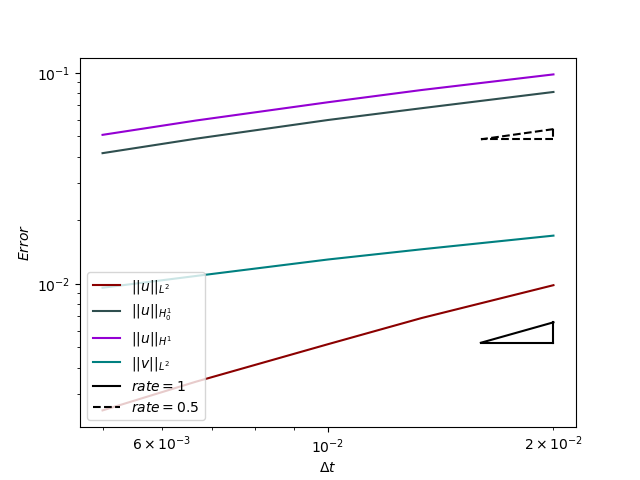}
\caption{\small{Rate of the tempporal error convergence of Test 1 in $\bL^2$- and $\bH^1$-norm. Here $T = 0.5$, $h = 1/256$, and $k\in \{50^{-1}, 75^{-1}, 100^{-1}, 150^{-1}, 200^{-1}\}.$}}\label{fig:ex1-err}
\end{figure}
	
Now we fix $k_{ref} = 1/500$ and choose $h_{ref} = 1/512$. Table \ref{tbl:ex1-err-h} and Figure \ref{fig:ex1-err-h} list the spatial error and convergence order results of Test 1.
\begin{table}[H]
\caption{Spatial error of Test 1. The reference solution is given by $h_{ref} = 1/512$ and $500$ samples are selected.}
\centering
\begin{tabular}{llllll}
\toprule
\multicolumn{2}{l}{Scheme 2} & \multicolumn{4}{l}{$\exc{\norm{\bu(T)-\bu^N_h}}$}\\
$k$ & $h$ & $L^2$ error     & order     & $H^1_0$ error      & order \\
\midrule
\multirow{3}{1.5cm}{$1/500$} 
& 1/64 & $\Exp{9.38}{-5}$ & —— & $\Exp{1.76}{-3}$ & ——\\
& 1/128 & $\Exp{2.37}{-5}$ & 1.98 & $\Exp{9.52}{-4}$ & 0.89\\
& 1/256 & $\Exp{5.71}{-6}$ & 2.06 & $\Exp{4.44}{-4}$ & 1.10\\
\bottomrule
\end{tabular}\label{tbl:ex1-err-h}
\end{table}
	
\begin{figure}[H]
\centering
\includegraphics[height=2.5in,width=0.6\linewidth]{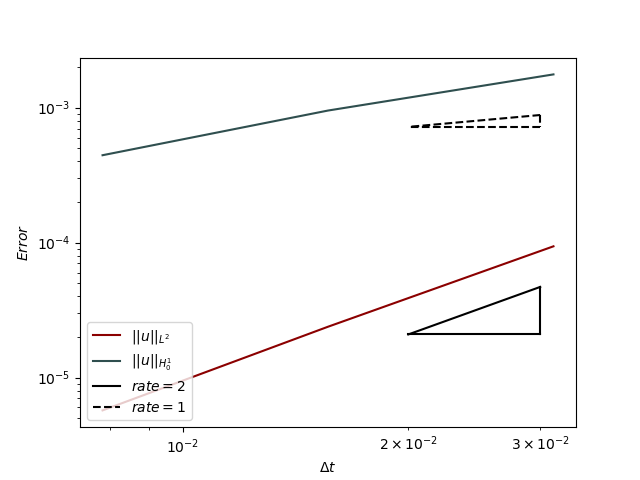}
\caption{\small{The spatial convergence rate of Test 1 in $\bL^2$- and $\bH^1$-norm. Here $T = 0.5$, $k = 1/500$, and $h\in \{2^{(-6+i)}, i = 0, 1, 2\}$.}}
\label{fig:ex1-err-h}
\end{figure}

Finally, to verify the stability of the proposed numerical methods, we compute the energy norm numerically and draw the graph of the energy norm over time, as shown in Figure \ref{fig:ex1-energy}.
		
\begin{figure}
\centering
\subfigure[$k = 1/50$]{\includegraphics[width=0.48\columnwidth]{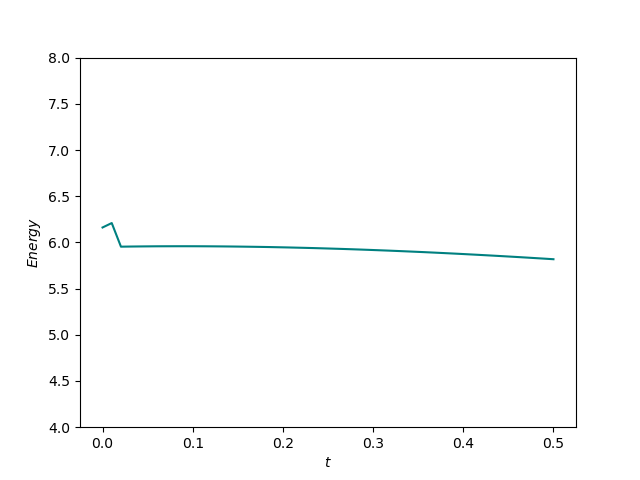}} 
\subfigure[$k = 1/75$]{\includegraphics[width=0.48\columnwidth]{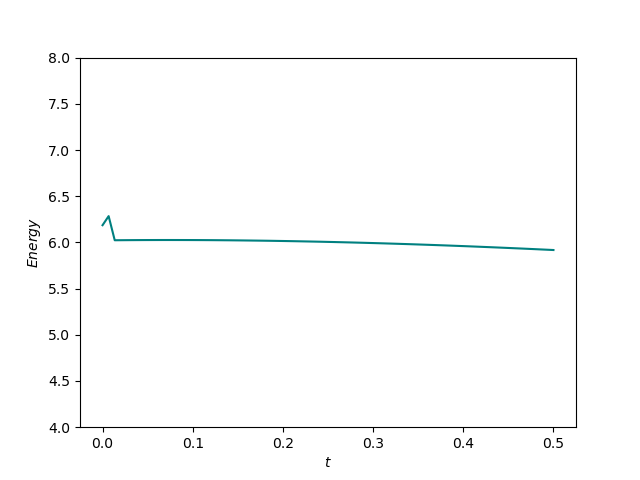}} \\
\subfigure[$k = 1/100$]{\includegraphics[width=0.48\columnwidth]{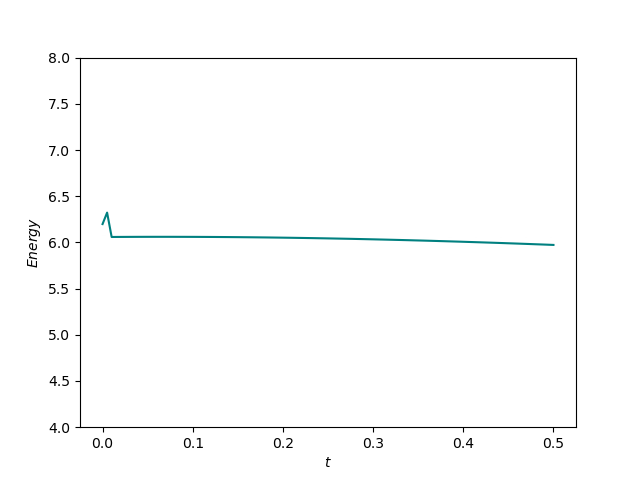}} 
\subfigure[$k = 1/150$]{\includegraphics[width=0.48\columnwidth]{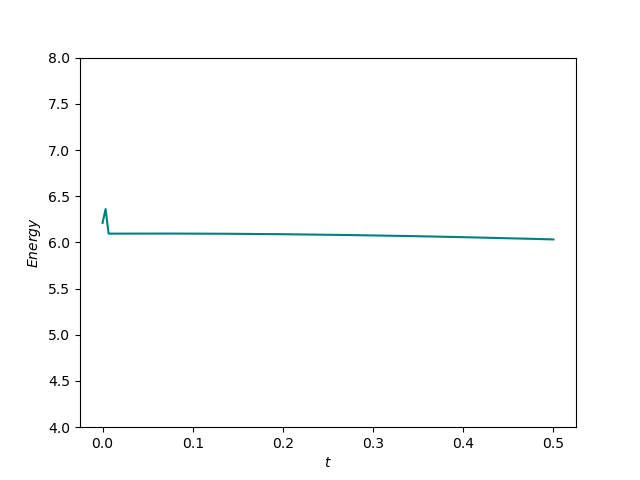}} \\
\subfigure[$k = 1/200$]{\includegraphics[width=0.48\columnwidth]{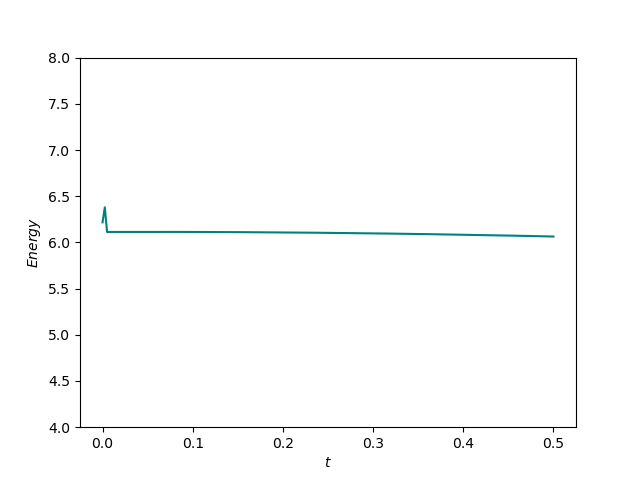}} 
\subfigure[$k = k_{ref}$]{\includegraphics[width=0.48\columnwidth]{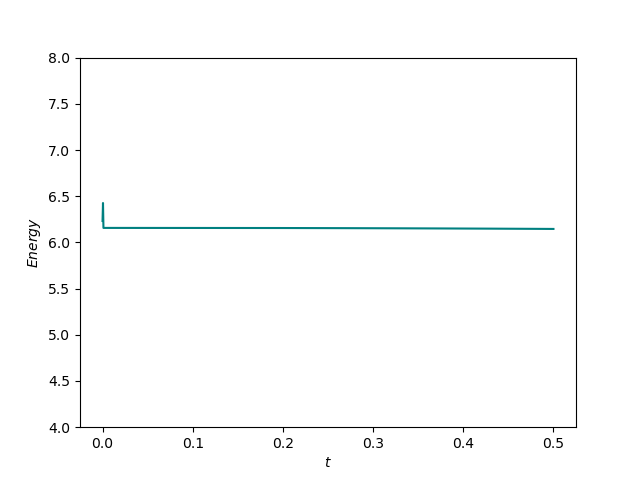}}
\caption{\small\itshape{Energy norm in Test 1 over time based on different time step sizes.}}
\label{fig:ex1-energy}
\end{figure}
	
{\bf Test 2.} \label{ssec:numerical-test-2}
We consider the stochastic elastic wave equations with nonlinear multiplicative noise as given below. 
\begin{align*}
\left\{
\begin{aligned}
&\bu_{tt}(t,\bf{x})-\Div(\sigma(\bu(t,\bf{x}))) = \bF[\bu]+\bG[\bu]\frac{dW}{dt}\    &\text{in}&\ \cal{D}\times[0,T],\\
&\bu(t,\bf{x})=\bf{0} &\text{on}&\ \partial\cal{D}\times[0,T],\\
&\bu(0,\bf{x})=\bu_0,\ \bu_t(0,\bf{x})=\bu_1\ &\text{in}&\ [0,T],\\
\end{aligned}
\right.
\end{align*}
where $\commentone{\cal{D}=[0,1]\times[0,1]}$, $\bF[\bu]=|\bu|^2\bu$ and $\bG[\bu] = \commentone{\delta}(|\bu|^2+1)\bu$. Notice that 
both $\bF$ and $\bG$ are nonlinear in $\bu$. 

The datum functions are chosen as
\begin{align*}
\left\{
\begin{aligned}
&\bu_0=\left(\begin{matrix}
\sin 3\pi x_1\sin 2\pi x_2,\\
\sin 2\pi x_1\sin 3\pi x_2
\end{matrix}\right),\\
&\bu_1=\bf{0}.
\end{aligned}
\right.
\end{align*}

Choose the noise intensity parameter $\commentone{\delta} = 0.1$, final time $T=0.5$, and the numerical solution with $k_{ref} = 1/500$ as the numerical exact solution for verifying the time convergence order. Table \ref{tbl:ex2-err-k} and Figure \ref{fig:ex2-err} list the temporal error and convergence order results of Test 1.
	
\begin{table}[H]
\caption{Temporal error of Test 2. Uniform mesh with $h = 1/256$ at $T = 0.5$, and $500$ samples are selected.}
\centering
\begin{tabular}{llllllll}
\toprule
\multicolumn{2}{l}{Scheme 2} & \multicolumn{4}{l}{$\exc{\norm{\bu(T)-\bu^N_h}}$}  & \multicolumn{2}{l}{$\exc{\norm{\bv(T)-\bv^N_h}}$}\\
\cmidrule(r){1-2}\cmidrule(r){3-6}\cmidrule(r){7-8}
$h$ & $k$ & $L^2$ error     & order     & $H^1_0$ error      & order & $L^2$ error     & order\\ \midrule
\multirow{5}{1.5cm}{$1/256$} 
& 1/50 & $\Exp{1.32}{-1}$ & —— & $\Exp{1.65}{0}$ & ——  & $\Exp{4.42}{-1}$ & —— \\
& 1/75 & $\Exp{9.46}{-2}$ & 0.82 & $\Exp{1.41}{0}$ & 0.39 & $\Exp{3.76}{-1}$ & 0.40 \\
& 1/100 & $\Exp{7.27}{-2}$ & 0.92 & $\Exp{1.24}{0}$ & 0.43 & $\Exp{3.32}{-1}$ & 0.43 \\
& 1/150 & $\Exp{4.88}{-2}$ & 0.98 & $\Exp{1.03}{0}$ & 0.47 & $\Exp{2.79}{-1}$ & 0.43 \\
& 1/200 & $\Exp{3.65}{-2}$ & 1.01 & $\Exp{8.96}{-1}$ & 0.50 & $\Exp{2.41}{-1}$ & 0.51 \\ \bottomrule
\end{tabular}\label{tbl:ex2-err-k}
\end{table}
	
\begin{figure}[H]
\centering
\includegraphics[height=2.5in,width=0.6\linewidth]{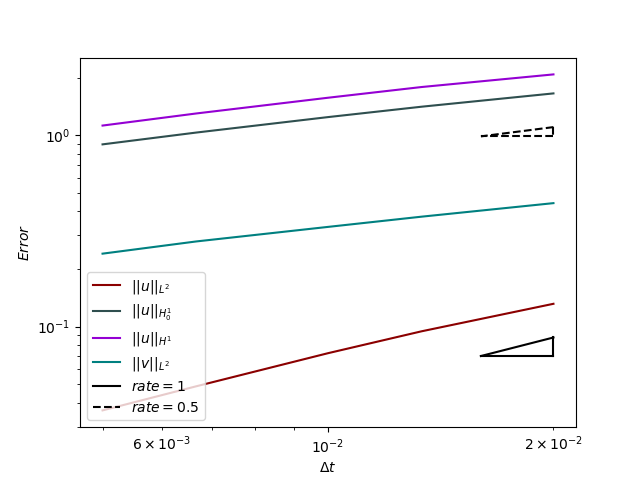}
\caption{\small{Rate of the temporal error convergence of Test 2 in $\bL^2$- and $\bH^1$-norm. Here $T = 0.5$, $h = 1/256$, and $k\in \{50^{-1}, 75^{-1}, 100^{-1}, 150^{-1}, 200^{-1}\}$.}}\label{fig:ex2-err}
\end{figure}
	
Now we fix $k_{ref} = 1/500$ and choose $h_{ref} = 1/512$. Table \ref{tbl:ex2-err-h} and Figure \ref{fig:ex2-err-h} list the spatial error and convergence order results of Test 2.
	
\begin{table}[H]
\caption{Spatial error of Test 2. The reference solution is computed by using  $h_{ref} = 1/512$  and $500$ samples are selected.}
		
\centering
\begin{tabular}{llllll}
\toprule
\multicolumn{2}{l}{Scheme 2} & \multicolumn{4}{l}{$\exc{\norm{\bu(T)-\bu^N_h}}$}\\
\cmidrule(r){1-2}\cmidrule(r){3-6}
$k$ & $h$ & $L^2$ error     & order     & $H^1_0$ error      & order \\
\midrule
\multirow{3}{1.5cm}{$1/500$} 
& 1/64 & $\Exp{7.56}{-4}$ & —— & $\Exp{3.10}{-2}$ & ——\\
& 1/128 & $\Exp{2.30}{-4}$ & 1.72 & $\Exp{1.77}{-2}$ & 0.81\\
& 1/256 & $\Exp{5.94}{-5}$ & 1.95 & $\Exp{9.10}{-3}$ & 0.96\\
\bottomrule
\end{tabular}\label{tbl:ex2-err-h}
\end{table}
	
\begin{figure}[H]
\centering
\includegraphics[width=0.6\linewidth]{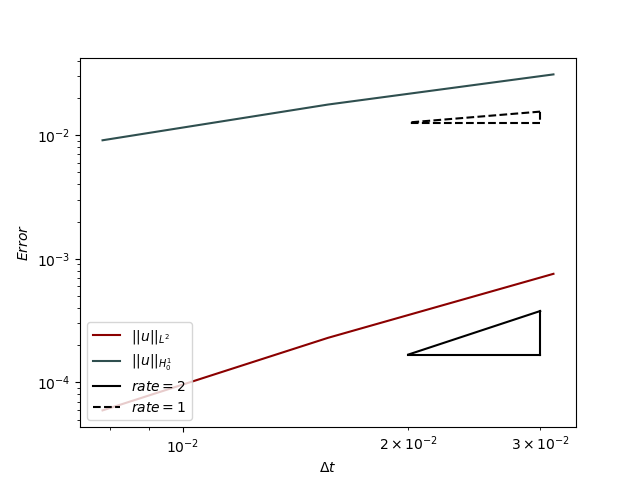}
\caption{\small{Rate of the spatial error convergence of Test 2 in the $\bL^2$- and  $\bH^1$-norm. $T = 0.5$, $k = 1/500$ and $h\in \{2^{(-6+i)}, i = 0, 1, 2\}.$}}
\label{fig:ex2-err-h}
\end{figure}

Finally, to verify the stability of the proposed numerical method, we compute the energy norm numerically and draw the graph of the energy norm over time, as shown in Figure \ref{fig:ex2-energy}.
	
\begin{figure}
\centering
\subfigure[$k = 1/50$]{\includegraphics[width=0.48\columnwidth]{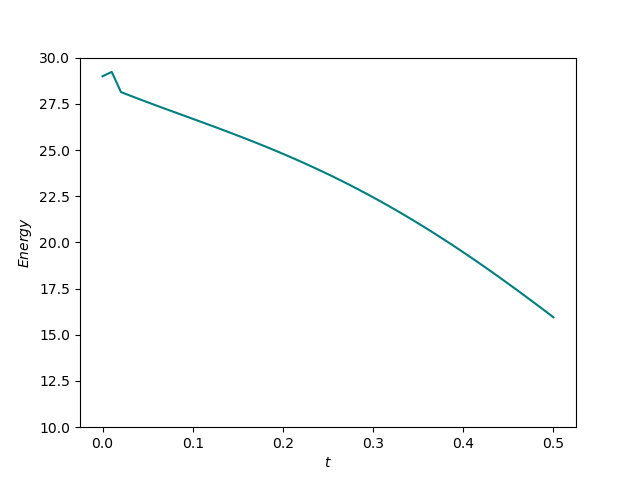}} 
\subfigure[$k = 1/75$]{\includegraphics[width=0.48\columnwidth]{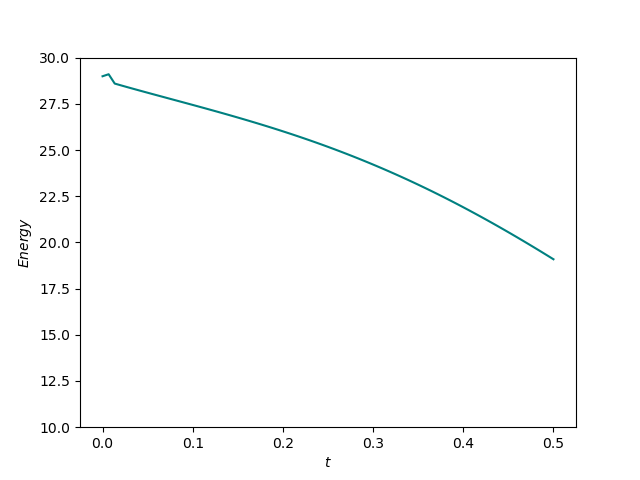}} \\
\subfigure[$k = 1/100$]{\includegraphics[width=0.48\columnwidth]{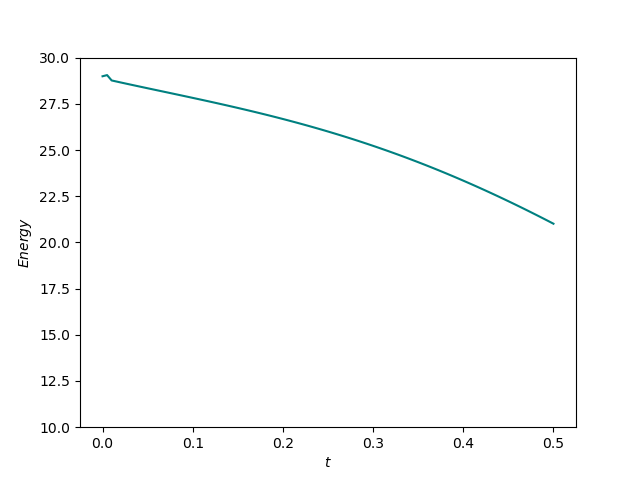}} 
\subfigure[$k = 1/150$]{\includegraphics[width=0.48\columnwidth]{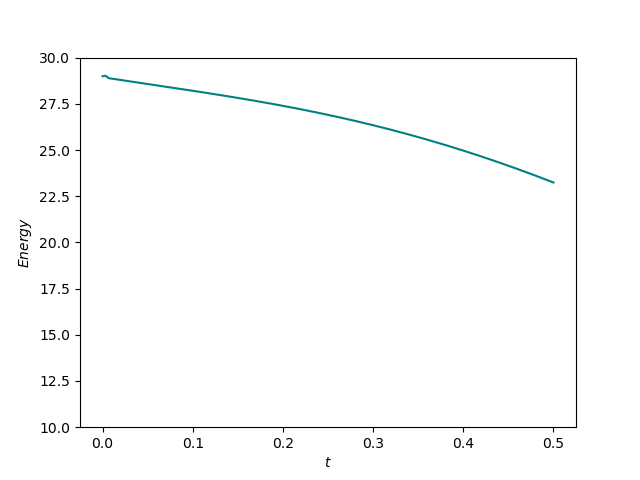}} \\
\subfigure[$k = 1/200$]{\includegraphics[width=0.48\columnwidth]{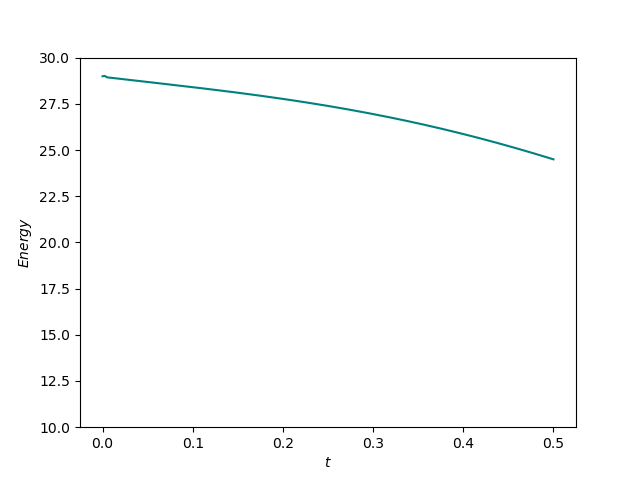}} 
\subfigure[$k = \Delta t_{ref}$]{\includegraphics[width=0.48\columnwidth]{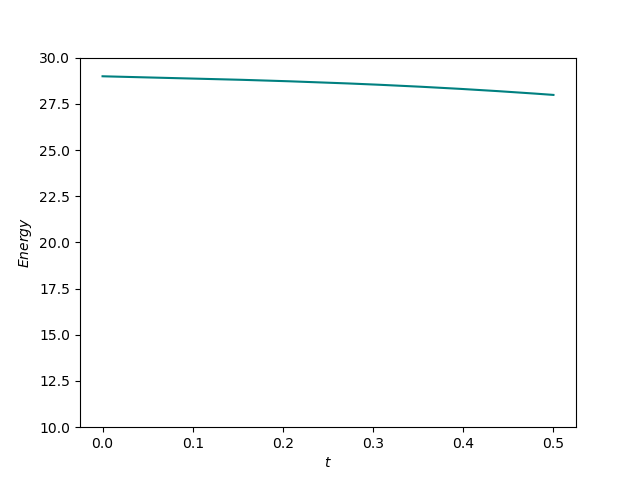}}
\caption{\small\itshape{Energy norm of Test 2 over time, using different time step sizes.}}
\label{fig:ex2-energy}
\end{figure}

\section{Conclusion}\label{sec:conclustion}
In this paper we proposed a semi-discrete in time scheme and a fully discrete finite element method for nonlinear stochastic elastic wave equations with multiplicative noise. We proved various properties for the strong solution such as stability and H\"older continuity estimates. We
also established the stability and error estimates for the semi-discrete numerical solution, which show that the $\bL^2$-norm of the temporal error has first order convergence, while the $\bH^1$-norm of the error has one half order convergence. Moreover, we proved that, for the linear finite element, the $\bL^2$-norm of the spatial error has second order convergence, and the $\bH^1$-norm of the error has first order convergence. Two-dimensional numerical experiments were also presented using the proposed numerical methods to validate the theoretical results proved in the paper.
	
To the best of our knowledge, no numerical analysis result for stochastic elastic wave equations has been reported in the literature. The work of this paper fills the void in this area.  Like in the 
numerical analysis of stochastic parabolic PDEs, a key idea and technique for overcoming the difficulty caused by the noise is to establish and make use of the H\"older continuity in time 
of the weak solution in various spatial norms. 
We also note that the results of this paper could be improved in light of the recent work \cite{feng2022higher}. In particular, it is possible to construct better time-stepping schemes which can achieve the optimal $\frac32$ order of convergence for the 
displacement approximation, which in turn requires higher order approximation for the multiplicative noise term. In addition, the classical Monte Carlo (MC) method is computationally inefficient, more efficient quasi-MC methods could be used to speed up the computation of
the expected values. Moreover, other numerical methods such as the stochastic spectral method \cite{zhang2017numerical} could be used to process and analyze the noise term. Finally, the analysis techniques of this paper may not be able to handle  more general nonlinear functions $\bF[\bu]$ and $\bG[\bu]$,  new analysis techniques are needed for the job. We plan to continue addressing those issues in a future work. 

\bigskip
\noindent
{\bf Acknowledgment.} The first author was partially supported by the National Natural Science Foundation under Grant No. DMS-2012414, and the second author was partially supported by the National Natural Science Foundation under Grant No. DMS-2110728.


\begin{thebibliography}{99}
\bibitem{adjerid2011discontinuous} S. Adjerid and H. Temimi, {\em A discontinuous Galerkin method for the wave equation}, Comp. Methods Applied Mech. Eng., 200(5-8), 837-849, (2011).

\bibitem{anton2016full}
R. Anton, D. Cohen, S. Larsson, and X. Wang, {\em Full discretization of semilinear stochastic wave equations driven by multiplicative noise}, SIAM J. Numer.  Anal., 54(2), 1093--1119, (2016).

\bibitem{baccouch2012local} M. Baccouch, {\em A local discontinuous Galerkin method for the second-order wave equation}, Comp. Methods Applied Mech. Eng., 209, 129--143, (2012).

\bibitem{baker1976error} Baker, G. A., {\em Error estimates for finite element methods for second order hyperbolic equations}, SIAM J. Numer. Anal. 13(4), 564--576 (1976).

\bibitem{chou2014optimal} C.-S. Chou, C.-W. Shu, and Y. Xing, {\em Optimal energy conserving local discontinuous Galerkin methods for second-order wave equation in heterogeneous media,} J Comput. Phy., 272, 88--107, (2014).

\bibitem{Chow} P.-L. Chow, {\em Stochastic Partial Differential Equations},   Chapman and Hall/CRC, Singapore, (2019).

\bibitem{chung2006optimal}
E. Chung and B. Engquist, {\em Optimal discontinuous Galerkin methods for wave propagation,} SIAM J. Numer. Anal., 44(5), 2131--2158, (2006).

\bibitem{chung2009optimal}
E. Chung and B. Engquist, {\em Optimal discontinuous Galerkin methods for the acoustic wave equation in higher dimensions,} SIAM J. Numer. Anal., 47(5), 3820--3848, (2009).

\bibitem{ciarlet2002finite} P. G. Ciarlet, {\em The Finite Element Method for Elliptic Problems,} SIAM, Philadelphia (2002).

\bibitem{cohen2013trigonometric}
D. Cohen, S. Larsson, and M. Sigg, {\em A trigonometric method for the linear stochastic wave equation}, SIAM J. Numer. Anal., 51(1), 204--222, (2013).

\bibitem{LangtangenLogg2017}
P. L. Hans and L. Anders, {\em Solving PDEs in Python}, Springer, New York (2017).

\bibitem{cohen2015fully}
D. Cohen and L. Quer-Sardanyons, {\em A fully discrete approximation of the one-dimensional stochastic wave equation,} IMA J. Numer.  Anal., 36(1), 400--420, (2015).

\bibitem{cohen2017finite} G. Cohen and S. Pernet, {\em Finite Element and Discontinuous Galerkin Methods for Transient Wave Equations}, Springer, New York (2017).

\bibitem{cohen2018numerical}
D. Cohen, {\em Numerical discretisations of stochastic wave equations}, AIP Conference Proceedings, 1, 020001, (2018).

\bibitem{cui2019strong}
J. Cui, J. Hong, L. Ji, and L. Sun, {\em Strong convergence of a full discretization for stochastic wave equation with polynomial nonlinearity and additive noise},  arXiv:1909.00575, (2019).

\bibitem{dupont19732}
T. Dupont, {\em $L^2$-estimates for Galerkin methods for second order hyperbolic equations,} SIAM J. Numer. Anal. 10(5), 880--889, (1973).


\bibitem{falk1999explicit}
R. Falk and G. Richter, {\em Explicit finite element methods for symmetric hyperbolic equations,} SIAM J. Numer. Anal., 36(3), 935--952, (1999).

\bibitem{feng2022higher}
X. Feng, A. A. Panda, and A. Prohl, {\em Higher order time discretization for the stochastic semilinear wave equation with multiplicative noise,}  arXiv:2205.07393, (2022).

\bibitem{gauthier1986two}
O. Gauthier, J. Virieux, and A. Tarantola, {\em Two-dimensional nonlinear inversion of seismic waveforms: Numerical results,} Geophysics, 51(7), 1387--1403, (1986).

\bibitem{gubinelli2018renormalization}
M. Gubinelli, H. Koch, and T. Oh, {\em Renormalization of the two-dimensional stochastic nonlinear wave equations}, Trans. AMS, 370(10), 7335--7359, (2018).

\bibitem{grote2006discontinuous} M. Grote, A. Schneebeli, and D. Sch{\"o}tzau, {\em Discontinuous Galerkin finite element method for the wave equation,} SIAM J. Numer. Anal., 44(6), 2408--2431, (2006).

\bibitem{grote2009optimal} M. Grote, A. Schneebeli, and D. Sch{\"o}tzau, {\em Optimal error estimates for the fully discrete interior penalty DG method for the wave equation,} 
J. Scient. Comput., 40(1-3), 257--272, (2009).

\bibitem{hausenblas2010weak}
E. Hausenblas, {\em Weak approximation of the stochastic wave equation},
 J. Comput. Applied Math., 235(1), 33--58, (2010).

\bibitem{hong2022energy}
J. Hong, B. Hou, and L. Sun, {\em Energy-preserving fully-discrete schemes for nonlinear stochastic wave equations with multiplicative noise,}  J Comput. Phy., 451, 110829, (2022).

\bibitem{igel1995anisotropic}
H. Igel, P. Mora, and B. Riollet, {\em Anisotropic wave propagation through finite-difference grids,} Geophysics, 60(4), 1203--1216 (1995).

\bibitem{kovacs2010finite}
M. Kov{\'a}cs, S. Larsson, and F. Saedpanah, {\em Finite element approximation of the linear stochastic wave equation with additive noise}, SIAM J. Numer. Anal., 48(2), 408--427, (2010).

\bibitem{li2022finite}
Y. Li, S. Wu, and Y. Xing, {\em Finite element approximations of a class of nonlinear stochastic wave equations with multiplicative noise},  J. Scient. Comput., 91(2), 1--29, (2022).


\bibitem{marfurt1984accuracy}
K. J. Marfurt, {\em Accuracy of finite-difference and finite-element modeling of the scalar and elastic wave equations}, Geophysics, 49(5), 533--549, (1984).

\bibitem{monk2005discontinuous} P. Monk and G. Richter, {\em A discontinuous Galerkin method for linear symmetric hyperbolic systems in inhomogeneous media,} 
J. Scient. Comput., 22(1-3), 443--477, (2005).

\bibitem{quer2006space}
L. Quer-Sardanyons and M. Sanz-Sol{\'e}, {\em Space semi-discretisations for a stochastic wave equation}, Potential Analysis, 24(4), 303--332, (2006).

\bibitem{reddy1973convergence}
J. N. Reddy and J. T. Oden, {\em Convergence of mixed finite element approximations of a class of linear boundary-value problems}, J. Struct. Mech., 2(2), 83--108, (1973).

\bibitem{riviere2003discontinuous} B. Riviere and M. Wheeler, {\em Discontinuous finite element methods for acoustic and elastic wave problems,} Contemp. Math., 329(271-282), 4--6, (2003).

\bibitem{safjan1993high} A. Safjan and J. Oden, {\em High-order Taylor-Galerkin and adaptive hp methods for second-order hyperbolic systems: application to elastodynamics,} 
Comput. Methods Applied Mech. Eng., 103(1-2), 187--230, (1993).

\bibitem{saenger2000modeling} E. H. Saenger, N. Gold, and S. A. Shapiro, {\em Modeling the propagation of elastic waves using a modified finite-difference grid,} 
Wave motion, 31(1), 77--92, (2000).

\bibitem{sun2021} Z. Sun and Y. Xing, {\em Optimal error estimates of discontinuous Galerkin methods with generalized fluxes for wave equations on unstructured meshes,} 
Math. Comp., 90, 1741--1772, (2021).

\bibitem{virieux1984sh}
J. Virieux, {\em SH-wave propagation in heterogeneous media: Velocity-stress finite-difference method,}  Geophysics 49(11), 1933--1942, (1984).

\bibitem{walsh2006numerical}
J. Walsh, {\em On numerical solutions of the stochastic wave equation}, Illinois J. Math., 50(1-4), 991--1018, (2006).

\bibitem{xing2013energy} Y. Xing, C.-S. Chou, and C.-W. Shu, {\em Energy conserving local discontinuous Galerkin methods for wave propagation problems,}
 Inverse Probl. Imaging, 7(3), 967-986, (2013).

\bibitem{zhang2017numerical}  Z. Q. Zhang and G. Karniadakis, {\em Numerical Methods for Stochastic Partial Differential Equations with White Noise},  vol 196, Springer, New York, ( 2017).

\bibitem{zhong2011numerical}
X. Zhong and C.-W. Shu, {\em Numerical resolution of discontinuous Galerkin methods for time dependent wave equations}, Comput. Methods Applied Mech. Eng., 200(41-44), 2814--2827, (2011).
\end{thebibliography}
\end{document}